\newtheorem{thm}{Theorem}
\newtheorem{thm*}{Theorem}
\newtheorem{prop}{Proposition}
\newtheorem{lma}[prop]{Lemma}
\newtheorem{cor}[prop]{Corollary}
\theoremstyle{definition}
\newtheorem{df}[prop]{Definition} 
\newtheorem{conj}[prop]{Conjecture}
\theoremstyle{remark}
\newtheorem{rmk}[prop]{Remark} 
\newtheorem{qtn}[prop]{Question}
\newcommand{\R}{{\mathbb{R}}}
\newcommand{\Z}{{\mathbb{Z}}}
\newcommand{\C}{{\mathbb{C}}}
\newcommand{\bs}{\bigskip}
\newcommand{\del}{\partial}
\newcommand{\sm}[1]{C^\infty(#1)}
\newcommand{\vareps}[1]{\varepsilon_{#1}}
\newcommand\vol{\operatorname{vol}}
\newcommand{\G}{\mathcal{G}}
\newcommand{\cL}{\mathcal{L}}
\newcommand{\til}[1]{\widetilde{#1}}
\newcommand{\wh}[1]{\widehat{#1}}
\newcommand{\om}{\omega}
\newcommand{\al}{\alpha}
\newcommand{\la}{\lambda}
\newcommand{\eps}{\epsilon}
\newcommand{\de}{\delta}
\newcommand{\cA}{\mathcal{A}}
\newcommand{\cG}{\mathcal{G}}
\newcommand{\cH}{\mathcal{H}}
\newcommand{\cR}{\mathcal{R}}
\DeclareMathOperator{\Diff}{\mathrm{Diff}}
\DeclareMathOperator{\Ham}{\mathrm{Ham}}
\DeclareMathOperator{\Cont}{\mathrm{Cont}}
\DeclareMathOperator{\Ker}{\mathrm{Ker}}
\DeclareMathOperator{\osc}{\mathrm{osc}}
\def\H2{H^{(2)}}
\begin{document}

\title[Hofer norm of a contactomorphism]{The Hofer norm of a contactomorphism}
\author{Egor Shelukhin}
\email{egorshel@gmail.com}
\address{Institute for Advanced Study, Einstein Drive, Princeton NJ 08540}

\subjclass[2010]{53D10, 37J55, 57R17, 57S05}
\keywords{contact Hamiltonian, right-invariant metric, contactomorphism group, translated points, Hofer norm}

\date{}

\begin{abstract}
We show that the $L^{\infty}$-norm of the contact Hamiltonian induces a non-degenerate right-invariant metric on the group of contactomorphisms of any closed contact manifold. This contact Hofer metric is not left-invariant, but rather depends naturally on the choice of a contact form $\al,$ whence its restriction to the subgroup of $\al$-strict contactomorphisms is bi-invariant. The non-degeneracy of this metric follows from an analogue of the energy-capacity inequality. We show furthermore that this metric has infinite diameter in a number of cases by investigating its relations to previously defined metrics on the group of contact diffeomorphisms. We study its relation to Hofer's metric on the group of Hamiltonian diffeomorphisms, in the case of prequantization spaces. We further consider the distance in this metric to the Reeb one-parameter subgroup, which yields an intrinsic formulation of a small-energy case of Sandon's conjecture on the translated points of a contactomorphism. We prove this Chekanov-type statement for contact manifolds admitting a strong exact filling. 
\end{abstract}

\maketitle


\section{Introduction and main results}




\subsection{Introduction}

Since the advent of the conjugation-invariant Hofer norm \cite{HoferMetric,LalondeMcDuffEnergy} on the group of compactly supported Hamiltonian diffeomorphisms of any symplectic manifold, there has been a certain interest in investigating possible analogues of this norm in contact topology. The first fundamental difference between the Hamiltonian and the contact settings is the finding of Burago-Ivanov-Polterovich \cite{BIP} that classical groups of diffeomorphisms that have a conformal freedom, the full group of diffeomorphisms and the contactomorphism group, do not to admit {\em fine} conjugation-invariant norms - that is norms whose image in $\R_{\geq 0}$ has $0$ as an accumulation point (see \cite{FPR} for the case of the contactomorphism group). Hence, all such norms must be {\em discrete}: there must exist a constant $c>0,$ such that the norm of every diffeomorphism other that the identity transformation exceeds $c.$ And indeed a number of conjugation-invariant non-degenerate norms with values in $\Z$ on groups of contactomorphisms of certain contact manifolds were discovered in \cite{MargheritaMetric,ZapolskyMetric,FPR,ColinSandonMetric} and certain other discrete norms are easily contructed from homogeneous quasi-morphisms on contactomorphism groups found in \cite{GiventalQuasimorphism,BormanZapolsky}.

In a different line of research, one notes that constructions similar to the Hofer metric using contact Hamiltonians tend to yield fine pseudo-norms on contactomorphism groups. In particular, a conjugation-invariant pseudo-norm was introduced on the group of {\em strict} contactomorphisms of any closed contact manifold equipped with a global contact form in \cite{BanyagaDonato} and its non-degeneracy was shown for a certain class of contact manifolds. This non-degeneracy result was improved in \cite{SpaethMullerI} to hold for all closed contact manifolds. By the observation of \cite{BIP,FPR}, this norm cannot extend to a conjugation-invariant norm on the full contactomorphism group. While that is indeed so, the goal of this paper is to show that for all closed contact manifolds, the norm of \cite{BanyagaDonato, SpaethMullerI} is bounded from below by another conjugation-invariant norm that extends to a non-degenerate fine norm on the full group of contactomorphisms, albeit without the property of conjugation-invariance.

The topology induced by this contact Hofer norm, and indeed the equivalence class of the metric it defines, turns out to be independent of the choice of a global contact form, and to admit a number of interesting closed subsets. We show, moreover, that the contact Hofer norm is related to a natural intersection problem for contactomorphisms - that of translated points of S. Sandon \cite{MargheritaTranslatedEarly, MargheritaTranslated}. In particular we show how it gives an intrinsic Chekanov-type statement for the existence of translated points (cf. \cite{ChekanovIntersections,AlbersFrauenfelderLeafwise,AlbersMerryTranslated,AlbersMerryFuchs}). Along the way we describe a geometric non-degeneracy condition for this problem.

We further show how the contact Hofer norm is related to certain previously introduced discrete bi-invariant norms on contactomorphism groups \cite{MargheritaMetric,ZapolskyMetric,FPR,GiventalQuasimorphism,BormanZapolsky} and to the Hofer norm on the group of Hamiltonian diffeomorphisms.

\subsection{Main results}

Given a connected co-oriented contact manifold\footnote{This is what we call a contact manifold in this paper.} $(N,\xi),$ that we will assume to be closed unless stated otherwise, with a globally defined contact form $\alpha$ with $\xi=\ker \al$ we define a contactomorphism to be a diffeomorphism $\psi \in \Diff(N)$ satisfying $\psi_* \xi = \xi$ and preserving the co-orientation of $\xi.$ This is equivalent to the existence of a positive function $\la_\psi = e^{g_\psi},\; g_\psi \in \sm{N,\R}$ such that $\psi^* \al = \la_\psi\cdot \al.$  We consider the group \[\G=\Cont_0(N,\xi)\] of contactomorphisms isotopic to the identity contactomorphism $1.$ 


Recall that $\G$ is naturally isomorphic to the group of $\R_{>0}$-equivariant Hamiltonian diffeomorphisms of the (positive) symplectization $SN$ of $N,$ which is abstractly defined as the subspace \[SN = \{(p,q)|\; p|_{\xi} = 0,\; p > 0_q\} \subset T^*N,\] of the non-zero covectors vanishing on $\xi,$ that are positive with respect to the co-orientation of $\xi,$ endowed with the restriction of the canonical symplectic form $\om_{can} = d\la_{can}$ on $T^*N.$ A choice $\al$ of a global contact form for $(N,\xi)$ the symplectization determines an $\R_{>0}$-equivariantly symplectomorphsm from $SN$ to $N \times \R_{>0}$ with the symplectic form $\om = d(r \til{\al}),$ where $\til{\al}$ denotes the lift of $\al$ by the projection to the first co-ordinate and $r$ denotes the coordinate function coinciding with the inclusion $\R_{>0} \hookrightarrow \R.$ In this isomorphism, the graph of $\al,$ which is a subset of $SN$ corresponds to the hypersurface $N_1:=N \times \{1\} \subset N \times \R_{>0}.$ Given a contactomorphism $\psi \in \G,$ its natural lift $\overline{\psi}$ to $SN$ is simply the restriction to $SN$ of its canonical lift to $T^* N.$ In the splitting $SN \cong N \times \R_{>0}$ given by a global contact form $\al,$ this lift takes the form $\overline{\psi}(x,r) = (\psi(x),\frac{r}{\la_\psi(x)}).$ For later use we note that the differential $D(\overline{\psi})(x,1)$ of $\overline{\psi}$ at $(x,1)$ is 

\begin{equation}\label{Equation: differential of the lift} D(\overline{\psi})(x,1) = \begin{pmatrix}
D({\psi})(x) & 0 \\ -\frac{1}{\la^2_\psi(x)} d \la_\psi(x) & \la_\psi(x)^{-1} 
\end{pmatrix}. \end{equation}

Note that with respect to the natural $\R_{>0}$ action on $SN,$ for an interval $I=(a,b) \subset \R_{>0}$ we have the equality of subsets $I \cdot N_1 = \{r \cdot x\;|\; r \in I,\; x \in N_1\} = N \times I$ with respect to the splitting $SN \cong N \times \R_{>0}.$ For a Hamiltonian $F \in \sm{[0,1] \times SN}$ denote its Hamiltonian flow (generated by the vector field $X_t$ given by $\iota_{X_t} \om = -dF_t$) by $\{\phi^t_F\}_{t \in [0,1]}.$

For an isotopy $\{\psi_t\}_{t \in [0,1]},$  $\psi_0 =1,$ in $\G,$ its time-dependent contact Hamiltonian ${H \in \sm{[0,1] \times N}}$ is by definition the restriction of the corresponding $\R_{>0}$-homogeneous Hamiltonian of degree $1$ on $SN$ to $N_1.$ In other words, for time $t \in [0,1]$ the contact Hamiltonian $H_t(-)=H(t,-)$ at time $t$ satisfies \[H_t=\al(Y_t)\] where $Y_t = \frac{d}{dt'}|_{t'=t} \psi_{t'} \circ \psi_t^{-1}$ is the time-dependent vector field generating $\{\psi_t\}.$ It is easy to see that the contact Hamiltonian determines the flow uniquely (the vector field $Y_t$ is the unique solution of the system $\al(Y_t) = H_t,\; \iota_{Y_t} d\al = - dH_t + dH_t(R_\al) \al$). Denote by $\{\psi_H^t\}$ the flow determined by the contact Hamiltonian $H = H(t,x).$

The one-parameter subgroup $\cR=\cR_\al$ of $\G$ defined by the flow of the autonomous degree-$1$-homogeneous Hamiltonian $H \equiv r$ on $SN \cong N \times \R_{>0}$ is called the {\em Reeb flow} of $\al,$ and the vector field $R=R_\al$ on $N$ generating it is uniquely defined by the conditions $\al(R)=1, \iota_R d\al = 0.$ Hence the natural homomorphism $\R \to \cR$ is given by $t \mapsto \phi_R^t.$

Following S. Sandon \cite{MargheritaTranslatedEarly,MargheritaTranslated}, we call a point $x \in N$ a {\em translated point} of a contactomorphism $\psi$ if $\psi(x) = \phi^\eta_R (x)$ for some $\eta \in \R,$ and $(\psi^* \al)_x = \al_x$ (that is $\la_\psi(x) = e^{g_\psi(x)} = 1$), or alternatively (see \cite{MargheritaTranslatedEarly,MargheritaTranslated}) $x$ is a leaf-wise intersection point of the lift $\overline{\psi}$ of $\psi$ to $SN,$ relative to the hypersurface $N_1 \subset SN.$ In this instance we call $(x,\eta)$ an {\em algebraic translated point} of $\psi$ and note that for a given translated point $x,$ there may be many algebraic translated points covering it (i.e. with first term $x$) \cite{AlbersFrauenfelderLeafwise,AlbersMerryTranslated}. We remark that for any $\psi \in \cR,$ the set of translated points consists of the whole manifold $N.$ 

\medskip
\begin{df}\label{Definition: non-degenerate contactomorphism}
We call an algebraic translated point $(x,\eta)$ of $\psi$ {\em non-degenerate} if the differential \[D(\phi_R^{-\eta}\psi)(x):T_x N \to T_x N\] of $\phi_R^{-\eta}\psi$ at $x$ does not have an eigenvector with eigenvalue $1$ that lies in the kernel of $d\la(x).$ In other words \begin{equation}\label{Equation: non-degeneracy} \ker(D(\phi_R^{-\eta}\psi)(x) - 1) \cap \ker(d\la(x)) = 0.\end{equation}
We say that a contactomorphism $\psi \in \G$ is {\em non-degenerate} if all its algebraic translated points are non-degenerate.
\end{df}

We shall see that this condition is the restriction of the non-degeneracy condition of Albers-Frauenfelder to the class of degree $1$ homogeneous Hamiltonians on $SN.$

\medskip
\begin{lma}\label{Lemma: non-degenerate contactomorphism}
The contactomorphism $\psi$ is non-degenerate if and only if the corresponding Rabinowitz-Floer functional in the symplectization $SN$ of $N$ is Morse.
\end{lma}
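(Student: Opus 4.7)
\begin{pfs}
The plan is to set up the Rabinowitz--Floer functional $\cA$ associated to $\psi$, identify its critical points with algebraic translated points of $\psi$, and then reduce non-degeneracy of the Hessian of $\cA$ at such a critical point --- via the matrix formula~\eqref{Equation: differential of the lift} --- to the pointwise condition~\eqref{Equation: non-degeneracy}.

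Let $\bar H_t(x,r) := r\, H_t(x)$ be the degree-$1$ homogeneous Hamiltonian on $SN \cong N \times \R_{>0}$ whose time-$1$ flow is $\overline{\psi}$, and let $\rho(x,r) := r-1$ be a defining function for $N_1$. The Rabinowitz--Floer functional takes the form
\[ \cA(u,\eta) = \int u^* \la_{can} - \int_0^1 \bar H_t(u(t))\, dt - \eta\int_0^1 \rho(u(t))\, dt, \]
on the space of contractible loops $u \colon S^1 \to SN$ paired with $\eta \in \R$. Since the Hamiltonian vector field of $\rho$ on $SN$ is the Reeb vector field $R$ (acting trivially on the $r$-coordinate), unpacking the Euler--Lagrange equations yields a bijection between critical points of $\cA$ and algebraic translated points $(x,\eta)$ of $\psi$, with $u(0) = (x,1) \in N_1$.

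The Albers--Frauenfelder non-degeneracy criterion states that the Hessian of $\cA$ at such a critical point is non-degenerate precisely when the endpoint linearization
\[ L := D\overline{\phi_R^{-\eta}\psi}(x,1) - \mathrm{Id} \colon T_{(x,1)}SN \to T_{(x,1)}SN \]
has kernel equal to the canonical null line $\R\cdot\partial_r$. Using that $\la_{\phi_R^{-\eta}\psi}(x) = \la_\psi(x) = 1$ at a translated point, formula~\eqref{Equation: differential of the lift} expresses $L$ in the splitting $T_{(x,1)}SN = T_xN \oplus \R\,\partial_r$ as the block matrix
\[ L = \begin{pmatrix} D(\phi_R^{-\eta}\psi)(x) - \mathrm{Id} & 0 \\ -d\la_\psi(x) & 0 \end{pmatrix}. \]
The entire second column vanishes, so $\ker L \supseteq \R\cdot\partial_r$, with equality precisely when no nonzero $v \in T_xN$ simultaneously satisfies $(D(\phi_R^{-\eta}\psi)(x) - \mathrm{Id})\, v = 0$ and $d\la_\psi(x)(v) = 0$, which is exactly condition~\eqref{Equation: non-degeneracy}.

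The main technical point is the identification of the Hessian of $\cA$ with the map $L$ and the bookkeeping of its gauge direction. One must verify that variation of the Rabinowitz parameter $\eta$ together with Liouville rescaling contributes exactly the $\partial_r$-direction to the null space; this is natural because $\overline{\phi_R^{-\eta}\psi}(x,r) = (x,r)$ for every $r > 0$ whenever $(x,\eta)$ is a translated point, so the line $\{x\}\times\R_{>0} \subset SN$ is tangent to the fixed-point set of the augmented return map, and condition~\eqref{Equation: non-degeneracy} encodes precisely the transverse non-degeneracy along the remaining directions.
\end{pfs}
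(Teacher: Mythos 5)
Your overall strategy is the one the paper follows: identify critical points of the (perturbed) Rabinowitz--Floer functional with algebraic translated points, and reduce Morse-ness at such a point, via the block matrix \eqref{Equation: differential of the lift}, to condition \eqref{Equation: non-degeneracy}. The endpoint linear algebra you carry out is correct and agrees with the paper's conclusion that the relevant kernel is $\ker(D(\phi_R^{-\eta}\psi)(x)-1)\cap\ker(d\la_\psi(x))$. Still, there are two gaps. The smaller one: the functional you write down omits the time-reparametrization weights $\tau'_1,\tau'_2$ with disjoint supports that the paper (following Albers--Frauenfelder \cite{AlbersFrauenfelderLeafwise}) builds into $\cA^F_G$. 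Without that splitting the critical point equation reads $\dot u = X_{\overline{H}_t}(u)+\eta X_F(u)$ at every time, the two flows do not decouple, and the asserted bijection between critical points and algebraic translated points fails in general.

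The more serious gap is the step you label ``the Albers--Frauenfelder non-degeneracy criterion'' --- that the Hessian of the infinite-dimensional functional is non-degenerate precisely when $\ker L=\R\,\del_r$. That equivalence is not available as a black box in this form for degree-$1$ homogeneous Hamiltonians on $SN$; it is the substance of the lemma, and the paper proves it by hand. Concretely, one writes the Hessian at $(w_0,\eta_0)$ using equation (A.11) of \cite{AlbersFrauenfelderLeafwise} in the twisted loop space, decomposes a kernel element along $\R\langle R\rangle\oplus\R\langle\del_r\rangle\oplus\xi_{w_0}$, and checks: pairing against variations of the Lagrange multiplier forces the $\del_r$-component to vanish identically --- so, contrary to your ``gauge direction'' heuristic, $\del_r$ is \emph{not} a null direction of the Hessian of $\cA$, it is absorbed by the constraint; the monodromy condition $\wh{w}(0)=D\overline{\phi}(\wh{w}(1))$ together with evaluation of $\al$ then forces $\wh{\eta}_1=0$; and what survives is a constant vector lying in $\ker(D(\phi_R^{-\eta_0}\psi)(x_0)-1)\cap\ker(d\la_\psi(x_0))$. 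Until you supply this computation (or verify that a precise statement from the literature applies to this homogeneous, non-compactly-supported setting), the proof is incomplete at its central step.
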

\medskip

Hence, whenever $\psi \in \G$ is non-degenerate, the algebraic translated points $(x,\eta) \in N \times \R$ are isolated, and hence are countably many. By an argument of Albers-Frauenfelder \cite{AlbersFrauenfelderLeafwise} that was brought to the contact setting by Albers-Merry \cite{AlbersMerryTranslated}, non-degenerate contactomorphisms are generic.

\medskip
\begin{lma}
The set of contact Hamiltonians generating non-degenerate contactomorphisms is of the second Baire category inside $\sm{[0,1] \times N, \R}.$
\end{lma}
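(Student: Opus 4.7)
The plan is to invoke Lemma \ref{Lemma: non-degenerate contactomorphism} and then carry out a standard Sard-Smale transversality argument in the spirit of Albers-Frauenfelder \cite{AlbersFrauenfelderLeafwise} and Albers-Merry \cite{AlbersMerryTranslated}. By that lemma, $\psi^1_H$ is non-degenerate if and only if the associated Rabinowitz-Floer functional on $SN$ is Morse, so it suffices to exhibit the corresponding set of contact Hamiltonians as a countable intersection of open dense subsets of $\sm{[0,1] \times N, \R}$.

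For each $k \in \mathbb{N}$, define
\[ \cU_k = \bigl\{ H \in \sm{[0,1] \times N, \R} : \text{every algebraic translated point } (x,\eta) \text{ of } \psi^1_H \text{ with } |\eta| \leq k \text{ is non-degenerate} \bigr\}. \]
Openness of $\cU_k$ follows from compactness of $N \times [-k,k]$ together with continuous dependence of the contact-Hamiltonian flow on $H$ in the $C^\infty$-topology: if $H_n \to H$ and each $H_n$ had a degenerate algebraic translated point $(x_n, \eta_n)$ with $|\eta_n| \leq k$, then a subsequential limit $(x,\eta)$ would be a degenerate algebraic translated point of $\psi^1_H$, contradicting $H \in \cU_k$. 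For density, I would introduce the universal moduli space
\[ \cM_k = \bigl\{ (H, x, \eta) \in \sm{[0,1] \times N, \R} \times N \times [-k,k] : \psi^1_H(x) = \phi^\eta_R(x),\; \la_{\psi^1_H}(x) = 1 \bigr\}, \]
pass to a $C^\ell$ Banach completion, and verify that the linearization of the defining equations is surjective at every $(H, x, \eta) \in \cM_k$. Granted this, $\cM_k$ is a Banach manifold, the projection to the Hamiltonian factor is Fredholm of index zero, and Sard-Smale combined with the standard approximation argument recovering smoothness from $C^\ell$ (à la Floer-Hofer-Salamon) yields a residual subset of regular values in $\sm{[0,1] \times N, \R}$, each contained in $\cU_k$. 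The sought residual set is then $\bigcap_{k \geq 1} \cU_k$.

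The main obstacle is surjectivity of the linearization: one must show that infinitesimal variations $\dot{H}$ realize any prescribed first-order change in both $\psi^1_H(x)$ and $\la_{\psi^1_H}(x)$. The variation formula for the flow reduces this to injectivity of an integral pairing of $\dot{H}$ against suitable test functions along the arc $\{\psi^t_H(x)\}_{t \in [0,1]}$, and this is handled by choosing perturbations localized at times when the trajectory visits a neighborhood disjoint from its earlier and later passages. The conformal factor $\la_{\psi^1_H}$ corresponds by \eqref{Equation: differential of the lift} to the $\R_{>0}$-direction in $SN$ and can be perturbed independently of the $N$-directions by $r$-homogeneous Hamiltonian lifts with non-trivial $r$-derivative along the trajectory, so tracking it alongside the geometric map introduces no new analytic difficulty beyond the classical Hamiltonian Floer transversality scheme.
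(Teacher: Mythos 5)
Your proposal follows the same route as the paper, which proves this lemma purely by citation: it attributes the genericity to the Sard--Smale, universal-moduli-space argument of Albers--Frauenfelder as adapted to translated points by Albers--Merry, and that is exactly the scheme you reconstruct (reduction via Lemma \ref{Lemma: non-degenerate contactomorphism} to the Morse property of the Rabinowitz--Floer functional, exhaustion by the open sets $\cU_k$, and transversality of a universal moduli space followed by the Taubes/Floer passage back to $C^\infty$). The one imprecision worth fixing is your heuristic for surjectivity of the linearization: localizing perturbations at ``injective times'' of the arc is the mechanism needed for Floer \emph{trajectories}, whereas for critical points one simply perturbs the $1$-jet of $H_t$ near the endpoint of the arc $\{\psi^t_H(x)\}$, using that the value and differential of a contact Hamiltonian at a point independently prescribe the contact vector field there and the infinitesimal variation of the conformal factor $\la_{\psi^1_H}(x)$.
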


\medskip

Given a global contact form $\al$ for $\xi,$ we denote by $\rho(\al)$ the minimal period of a periodic Reeb orbit of $\al$.

Recall that for a function $H \in \sm{N,\R}$ its $L^\infty$-norm is simply \[|H|_{L^\infty(N)} = \max_N |H|.\]

\medskip
\begin{df}\label{Definition: Hofer energy} Given a global contact form $\al,$ we define for a contactomorphism $\psi \in \G$ and for an element $\til{\psi} \in \til{\G}$ in the universal cover of $\G,$ their Hofer energy  $|\psi|_\al$ and $|\til{\psi}|_\al$ as 
\[ \inf \int_{0}^{1}|H_t|_{L^\infty(N)} dt,\] where the infimum runs over all isotopies $\{\psi_t\}$ with $\psi_1 = \psi$ in the first case and all isotopies in class $\til{\psi}$ in the second case. Clearly, if we denote by $\pi:\til{\G} \to \G$ the canonical projection homomorphism, we have \[|\psi|_\al= \inf_{\pi(\til{\psi}) = \psi} |\til{\psi}|_\al.\]
\end{df}

The first main result of this paper is the non-degeneracy of this Hofer-type functional. We observe certain other properties of this functional, showing that it is a norm on the group $\G.$ Note that this norm is not conjugation-invariant, but instead satisfies an equivariance property which is simply a statement of its naturality with respect to coordinate change. We remark that in the course of preparation of this paper it came to the attention of the author that this functional was previously defined by Rybicki \cite{RybickiI,RybickiII}, and Properties \ref{item: metric Property 2}, \ref{item: metric Property 3}, \ref{item: metric Property 4} other than the non-degeneracy were already observed by him.

\bs
\begin{thm}\label{Theorem: cocycle norm}
The Hofer energy functional $|\psi|_\al$ satisfies the following properties. Denote by $\phi,\psi \in \G$ two arbitrary elements.

\begin{enumerate}[label=(\roman{*}), ref=(\roman{*})]
\item \label{item: metric Property 1}{(non-degeneracy)} If $\psi \neq 1,$ then $|\psi|_\al > 0,$ and $|1|_\al = 0.$
\item \label{item: metric Property 2}{(triangle inequality)} $|\phi\psi|_\al \leq |\phi|_\al + |\psi|_{\al}.$
\item \label{item: metric Property 3}{(symmetry)} $|\psi^{-1}|_\al = |\psi|_\al.$
\item \label{item: metric Property 4}{(naturality)} $|\psi\phi\psi^{-1}|_\al = |\phi|_{\psi^* \al}$
\end{enumerate}

\end{thm}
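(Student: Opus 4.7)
The plan is to handle properties \ref{item: metric Property 2}, \ref{item: metric Property 3}, and \ref{item: metric Property 4} by direct bookkeeping with contact Hamiltonians under concatenation, time reversal, and conjugation of isotopies, and to reserve the substantive argument for non-degeneracy \ref{item: metric Property 1}, which I deduce from the contact analogue of the energy-capacity inequality promised in the abstract. The equality $|1|_\al = 0$ is immediate from the constant isotopy generated by $H\equiv 0.$ For \ref{item: metric Property 2}, fix $\epsilon > 0$ and choose isotopies from $1$ to $\phi$ and from $1$ to $\psi$ generated by contact Hamiltonians $F_t$ and $G_t$ whose $L^{\infty}$-integrals are within $\epsilon/2$ of $|\phi|_\al$ and $|\psi|_\al.$ After reparametrizations making each isotopy constant near its endpoints, concatenate $\{\psi_t\}$ with $\{\phi_s\psi\}_{s\in[0,1]};$ because right-multiplication by $\psi$ leaves the generating vector field unchanged, the resulting isotopy from $1$ to $\phi\psi$ has contact Hamiltonian with $L^\infty$-integral $\int_0^1|F_t|_\infty\,dt + \int_0^1|G_t|_\infty\,dt.$ Letting $\epsilon\to 0$ gives the triangle inequality.

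For \ref{item: metric Property 3}, if $\{\psi_t\}$ is generated by $Y_t,$ then differentiating $\phi_s := \psi_{1-s}\psi_1^{-1}$ in $s$ shows that $\phi_s$ is generated by $-Y_{1-s},$ so its contact Hamiltonian is $-H_{1-s},$ with the same $L^\infty$-integral as $H_t;$ taking infima and using $\psi\leftrightarrow \psi^{-1}$ yields the symmetry. For \ref{item: metric Property 4}, the conjugated isotopy $\{\psi\phi_t\psi^{-1}\}$ is generated by the push-forward $\psi_*Y_t,$ and its contact Hamiltonian with respect to $\al$ is
\[ K_t = \al(\psi_*Y_t) = \bigl((\psi^*\al)(Y_t)\bigr)\circ\psi^{-1} = (\la_\psi H_t)\circ\psi^{-1}. \]
Composition with $\psi^{-1}$ does not affect the $L^\infty$-norm, so $|K_t|_\infty = |\la_\psi H_t|_\infty,$ which is exactly the $L^\infty$-norm of the contact Hamiltonian of $\phi_t$ with respect to $\psi^*\al = \la_\psi\al.$ Since conjugation gives a bijection between isotopies of $\phi$ and of $\psi\phi\psi^{-1},$ taking infima yields the naturality.

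The main obstacle is \ref{item: metric Property 1}. Unlike in the symplectic setting, small contact balls can be displaced with arbitrarily small contact Hofer energy, so one cannot produce a uniform lower bound by pre-selecting a displaceable Darboux ball. Instead, the plan is to invoke the contact energy-capacity inequality to be proved later in the paper: for a sufficiently small Darboux ball $B,$ any isotopy whose time-$1$ map displaces $B$ satisfies $\int_0^1|H_t|_\infty\,dt \geq c(B) > 0.$ Given $\psi\neq 1,$ pick $x_0$ with $\psi(x_0)\neq x_0$ and a Darboux ball $B$ around $x_0$ small enough that $\psi(B)\cap B = \emptyset$ and that the energy-capacity estimate applies, producing a constant $c = c(\psi) > 0.$ Every isotopy to $\psi$ then has $L^\infty$-integral at least $c,$ so $|\psi|_\al \geq c > 0.$ The hard part is the energy-capacity estimate itself, which will require a genuine Floer-theoretic input rather than the formal manipulations above.
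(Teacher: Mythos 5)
Your proposal is correct and follows essentially the same route as the paper: properties \ref{item: metric Property 2}--\ref{item: metric Property 4} by the same contact-Hamiltonian bookkeeping under concatenation, time reversal, and conjugation, and non-degeneracy by reducing to the energy-capacity inequality (the paper's Proposition \ref{Proposition: energy-capacity}, proved separately via Usher's trick, a cutoff in the symplectization, and Lalonde--McDuff), applied to a small ball displaced by $\psi.$ The only cosmetic difference is that the paper phrases the displacement upstairs in $SN$ (a ball of positive Gromov radius displaced by the lift $\overline{\psi}$) rather than via a Darboux ball in $N$ itself.
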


\bs

\begin{rmk}
The analogous functional on $\til{\G}$ satisfies properties \eqref{item: metric Property 2}, \eqref{item: metric Property 3}, \eqref{item: metric Property 4}. Property \eqref{item: metric Property 1} may not hold, because it is not clear how to bound $|-|_\al$ from below on non-trivial elements of $\pi_1(\G) \subset \til{\G}.$
\end{rmk}

\begin{rmk}\label{Remark: relative distance to Quant}
We remark that these properties imply that $|\cdot|_\al$ descends to a conjugation-invariant norm on the subgroup $\cH = \cH_\al = \Cont_0(N,\al)$ of strict contactomorphisms of the contact form $\al.$ Moreover for $\phi,\psi \in \G$ the distance function (metric) \[d_\al(\phi,\psi) = |\psi \phi^{-1}|_\al\] is invariant with respect to the action of $\G$ on the right, and the action of $\cH$ on the left.
\end{rmk}

\bs
\begin{rmk}\label{Remark: L^p degenerate}
Following Eliashberg-Polterovich \cite{EliashbergPolterovichDeg}, it is easily seen that for $1 \leq p < \infty,$ the pseudo-norm $|-|_{p,\al}$ on $\G$ given by replacing $|H_t|_{L^\infty(N)}$ with $|H_t|_{L^p(N,\al (d\al)^n)}$ in Definition \ref{Definition: Hofer energy} is degenerate. Indeed, call a pseudo-norm $\nu$ on $\G$ {\it quasi-conjugation-invariant} if for each $\phi \in \G$ there exists $C(\phi) > 0,$ such that for all $\psi \in \G,$ \[\nu(\phi \psi \phi^{-1}) \leq C(\phi) \cdot \nu(\psi).\] Then for each such norm the non-degeneracy criterion of Eliashberg-Polterovich holds still: the pseudo-norm $\nu$ is non-degenerate if and only if the $\nu$-displacement-energy \[E_\nu(U) = \inf \{\nu(\phi)\;|\; \phi(U) \cap \overline{U} = \emptyset\}\] of any open subset $U \subset N$ is positive. The proof of this fact is identical to the one in \cite{EliashbergPolterovichDeg}, with the sole difference that the lower estimate on the displacement energy via commutators takes the form \[E_\nu(U) \geq \frac{1}{(C(\phi)+1)(C(\psi) + 1)} \cdot \nu([\phi,\psi])\] for all $\phi,\psi \in \G$ supported in $U.$ One continues to show that $|-|_{p,\al}$ is indeed quasi-conjugation-invariant, and that, by introducing an appropriate time-dependent cutoff, the $|-|_{p,\al}$-displacement energy of each small Darboux ball vanishes. 

Moreover, assuming the simplicity of $\G$ (see \cite{TsuboiSimple,RybickiSimple}), one concludes that $|-|_{p,\al}$ vanishes identically. Indeed $\mathcal{K}_\nu = \{\phi \in \G\;|\; \nu(\phi) = 0\}$ is a normal subgroup of $\G$ for every quasi-conjugation-invariant norm $\nu,$ and in the case $\nu = |-|_{p,\al},$ it is non-empty by the argument above.
\end{rmk}

\bs
\begin{rmk}
The conjugation-invariant norm $(|\cdot|_\al)|_{\cH}$ gives a lower bound for the conjugation-invariant norm $|\cdot |_{str,\al}$ on $\cH$ induced by the restriction of the Finsler metric defining $|\cdot|_\al$ to $\cH,$ that is - we take only paths in $\cH$ in Definition \ref{Definition: Hofer energy}.  Hence $|\cdot |_{str,\al}$ is also a non-degenerate conjugation-invariant norm on $\cH_\al.$ The inequality \[|H_t|_{L^\infty(N)} \leq \osc_N (H_t) + \frac{1}{\vol(N, \al (d\al)^n)} \left |\int_N H_t \al (d\al)^n \right |  \leq\] \[\leq \osc_N (H_t) + \frac{1}{\vol(N, \al (d\al)^n)}\int_N |H_t| \al (d\al)^n \leq 3 |H_t|_{L^\infty(N)}\] where $\osc_N (H_t) = \max_N (H_t) - \min_N (H_t),$ shows immediately an inequality for the corresponding norms \begin{equation}
\label{Equation: BDMS norms}|\phi|_{str,\al} \leq |\phi|_{BD} \leq |\phi|_{MS} \leq 3 |\phi|_{str,\al},\end{equation} where $|\cdot|_{BD}$ is the conjugation-invariant norm of Banyaga-Donato \cite{BanyagaDonato} on $\cH,$ that was reinterpreted and shown to be non-degenerate for all closed contact manifolds by M\"{u}ller-Spaeth in \cite{SpaethMullerI}. The inequality \eqref{Equation: BDMS norms} appeared in \cite{SpaethMullerI}, where it was also shown that $|\phi|_{BD} = |\phi|_{MS}.$
\end{rmk}

\medskip

\begin{rmk}\label{Remark: Calabi-Weinstein}
Note that $|\cdot|_{str,\al}$ on $\til{\cH}_\al$ is always unbounded. Indeed, the Calabi-Weinstein invariant $cw: \til{\cH}_\al \to \R$ (cf. \cite{WeinsteinCalabi}) defined as \[cw([\{\phi^t_H\}]) = \frac{1}{\vol(N, \al (d\al)^n)} \, \int_0^1 dt \int_N H(t,x) \al (d\al)^n\] for any path $\{\phi^t_H\}$ of strict contactormorphisms in a fixed class in $\til{\cH}_\al,$ satisfies \[cw([\{\phi^{\kappa \cdot t}_R\}_{t \in [0,1]}]) = \kappa,\] for all $\kappa \in \R,$ while $|cw(\til{\phi})| \leq |\til{\phi}|_{str,\al}$ for all $\til{\phi} \in \til{\cH}_\al.$ In fact it is easy to see that \[|[\{\phi^{\kappa \cdot t}_R\}_{t \in [0,1]}]|_{str,\al} = \kappa.\]
\end{rmk}

As a metric, $d_\al$ defines a topology on $\G.$ The following lemma states that the equivalence class of the metric $d_\al,$ and consequently this topology, does not depend on the choice $\al$ of a global contact form compatible with the co-orientation of $\xi.$ Note that any two such global contact forms $\al,\al'$ differ by a positive smooth function $f \in \sm{M,\R_{>0}}:$ \[\al' = f \cdot \al.\]

\begin{lma}\label{Lemma: equivalent metrics}
\[\min_N(f) \cdot d_{\al} \leq d_{\al'} \leq \max_N(f) \cdot d_\al, \] and therefore the metrics $d_\al, d_{\al'}$ are equivalent.
\end{lma}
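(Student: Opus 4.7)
The plan is to exploit the naturality of contact Hamiltonians under a rescaling of the contact form. If $\al' = f \cdot \al$ with $f \in \sm{N, \R_{>0}}$, then for any isotopy $\{\psi_t\}$ in $\G$ starting at $1$, the generating vector field $Y_t = \frac{d}{dt'}|_{t'=t} \psi_{t'} \circ \psi_t^{-1}$ is intrinsic to the isotopy and independent of the choice of contact form. Consequently, the contact Hamiltonians of $\{\psi_t\}$ with respect to $\al$ and $\al'$ are related pointwise by
\[ H'_t = \al'(Y_t) = f \cdot \al(Y_t) = f \cdot H_t. \]

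From here the argument reduces to a one-line pointwise estimate. Since $f > 0$ everywhere on the closed manifold $N$, one has
\[ \min_N(f) \cdot |H_t|_{L^\infty(N)} \leq |f \cdot H_t|_{L^\infty(N)} \leq \max_N(f) \cdot |H_t|_{L^\infty(N)}. \]
Integrating in $t$ over $[0,1]$ and then taking the infimum over all isotopies $\{\psi_t\}$ with $\psi_0 = 1$ and $\psi_1 = \psi$ — noting that the set of such isotopies is independent of the choice of contact form, since $\G = \Cont_0(N,\xi)$ depends only on $\xi$ — yields
\[ \min_N(f) \cdot |\psi|_\al \leq |\psi|_{\al'} \leq \max_N(f) \cdot |\psi|_\al \]
for every $\psi \in \G$. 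Applying this with $\psi \phi^{-1}$ in place of $\psi$ and invoking the formula $d_\al(\phi,\psi) = |\psi \phi^{-1}|_\al$ from Remark \ref{Remark: relative distance to Quant} gives the claimed inequalities for the distance function. Equivalence of the metrics is then immediate because $\min_N(f)$ and $\max_N(f)$ are strictly positive constants.

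No serious obstacle arises; the only point needing explicit verification is the transformation rule $H'_t = f \cdot H_t$, which I would state and justify carefully at the start, since it is the single mechanism driving the entire estimate. As a sanity check one may also derive this rule from the symplectization picture: different choices of $\al$ amount to different trivializations $SN \cong N \times \R_{>0}$, and the degree-$1$ homogeneous lift evaluated on $N_1$ rescales by exactly $f$.
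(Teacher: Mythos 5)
Your proof is correct, and it is exactly the argument the paper intends (the paper omits the proof of this lemma as routine): the transformation rule $H'_t = \al'(Y_t) = f\cdot H_t$ together with the pointwise bound $\min_N(f)\,|H_t|_{L^\infty(N)} \leq |f H_t|_{L^\infty(N)} \leq \max_N(f)\,|H_t|_{L^\infty(N)}$, integration in $t$, and taking infima over the (form-independent) set of isotopies. No gaps.
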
 

%


\medskip

The main point of Theorem \ref{Theorem: cocycle norm} is Property \ref{item: metric Property 1}. It follows from the following statement. For a compact subset $A$ of $SN,$ let be the {\em Gromov radius} of $A$ be \[c(A)= \sup\{u |\; \text{there exists a symplectic embedding}\; B(u) \hookrightarrow A \},\] where $B(u) \subset (\R^{2n},\om_{std})$ the standard ball of capacity $u$ (that is radius $\sqrt{\frac{u}{\pi}}$),  and define the {\em height of $A$} to be \[h(A) = h_\al(A) := \sup_A r_\al\] where $r_\al: SN \cong N \times \R_{>0} \to \R_{>0}$ is simply the projection to the second coordinate. Put \[\widehat{c}(A)=\widehat{c}_\al(A):= \frac{c(A)}{h_\al(A)}.\] Note that for $\la \in \R_{>0}$ we have $\widehat{c}(\la \cdot A) = \widehat{c}(A).$

\medskip

\begin{prop}\label{Proposition: energy-capacity}
If the lift $\overline{\psi}$ of $\psi$ to $SN$ displaces a compact subset $A \subset SN,$ then \[|\psi|_\al \geq \frac{1}{4} \widehat{c}_\al(A).\]
\end{prop}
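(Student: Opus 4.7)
The plan is to lift the problem to the symplectization $(SN,\omega)$ and invoke the Lalonde--McDuff energy--capacity inequality. Fix any isotopy $\{\psi_t\}_{t\in[0,1]}$ from $1$ to $\psi$ in $\G$ with contact Hamiltonian $H_t$; by taking the infimum over such isotopies at the end, it suffices to show $\int_0^1|H_t|_{L^\infty(N)}\,dt\geq\tfrac14\widehat c_\al(A)$. The lifted isotopy $\{\overline{\psi_t}\}$ is the Hamiltonian flow in $SN\cong N\times\R_{>0}$ of the degree-one homogeneous Hamiltonian $\widetilde H_t(x,r)=r\cdot H_t(x)$, which satisfies the pointwise bound $|\widetilde H_t(x,r)|\leq r\cdot|H_t|_{L^\infty(N)}$; in particular $|\widetilde H_t|\leq h_\al(A)\cdot|H_t|_{L^\infty(N)}$ on $\{r\leq h_\al(A)\}\supset A$.

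Next I construct a compactly supported Hamiltonian $F_t$ on $SN$ whose time-one flow displaces a symplectic ball $B(u)\subset A$ with $u$ arbitrarily close to $c_\al(A)$ and which satisfies $|F_t|_{L^\infty(SN)}\leq(h_\al(A)+\varepsilon)|H_t|_{L^\infty(N)}$. The natural candidate is $F_t(x,r)=\beta(r)H_t(x)$ where $\beta\colon\R_{>0}\to[0,h_\al(A)+\varepsilon]$ is smooth, compactly supported, and equal to $r$ on an interval containing the $r$-projection of the trajectory of the ball I wish to displace. On this interval $F_t$ coincides with $\widetilde H_t$, so their Hamiltonian vector fields agree and the time-one flow of $F_t$ on $B(u)$ equals $\overline{\psi_1}|_{B(u)}$, hence displaces $B(u)$.

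With such $F_t$ in hand, the Lalonde--McDuff inequality in $(SN,\omega)$ gives $\int_0^1\osc(F_t)\,dt\geq\tfrac12 c(B(u))=\tfrac{u}{2}$, and combined with $\osc(F_t)\leq 2|F_t|_{L^\infty(SN)}\leq 2(h_\al(A)+\varepsilon)|H_t|_{L^\infty(N)}$ this yields
\[
\int_0^1|H_t|_{L^\infty(N)}\,dt\;\geq\;\frac{u}{4(h_\al(A)+\varepsilon)}.
\]
Letting $\varepsilon\to0$, $u\to c_\al(A)$, and taking the infimum over isotopies completes the proof.

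The delicate point is the construction of $\beta$: displacing $B(u)$ requires the trajectory $\{\overline{\psi_t}(B(u))\}_{t\in[0,1]}$ to stay in the region where $\beta(r)=r$, yet the contactomorphism flow can push $r$-coordinates upward when $\la_{\psi_t}<1$. I anticipate handling this via the $\R_{>0}$-equivariance of $\overline{\psi_t}$, rescaling $B(u)$ through $\sigma_\la(x,r)=(x,\la r)$ and exploiting the scaling identities $c(\sigma_\la X)=\la c(X)$, $h_\al(\sigma_\la X)=\la h_\al(X)$ (so that $\widehat c_\al$ is $\sigma_\la$-invariant), together with the pointwise bound $|\widetilde H_t|\leq h_\al(A)|H_t|_{L^\infty(N)}$ on $\{r\leq h_\al(A)\}$. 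Reconciling the scaling parameter $\la$ with the cutoff so the resulting estimate delivers the sharp factor $1/(4h_\al(A))$ rather than an inferior $1/(4R_1)$ with $R_1=\sup_t h_\al(\overline{\psi_t}(A))$ is the main obstacle, and is where the homogeneity of $\widetilde H_t$ must be used decisively.
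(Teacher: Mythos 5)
You have the right skeleton---lift to the symplectization, cut off to a compactly supported Hamiltonian, apply Lalonde--McDuff---and you have correctly located the crux, but the gap you flag at the end is real and your anticipated fix cannot close it. A static cutoff $\beta(r)$ must satisfy $\beta(r)=r$ on the entire region swept by the trajectory of the ball, so the best constant you can extract this way is $\int_0^1|H_t|_{L^\infty(N)}\,dt\geq \tfrac14\, c(A)/R_1$ with $R_1=\sup_t h_\al(\overline{\psi}_t(A))$. Rescaling by $\sigma_\la$ gains nothing: by the $\R_{>0}$-equivariance of $\overline{\psi}_t$ one has $\overline{\psi}_t(\sigma_\la A)=\sigma_\la\overline{\psi}_t(A)$, hence $c(\sigma_\la A)/\sup_t h_\al(\overline{\psi}_t(\sigma_\la A))=c(A)/R_1$ is itself scale-invariant, so the inferior constant persists for every $\la$. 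Worse, $R_1$ depends on the chosen isotopy rather than on $\psi$ and $A$ alone, so taking the infimum over isotopies does not obviously produce a bound in terms of $\widehat{c}_\al(A)$.

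The paper closes this gap with two devices absent from your proposal. First, the cutoff is time-dependent and transported by the flow: $\overline{H}^{I,\delta}(t,x)=\overline{H}(t,x)\cdot\la^{I,\delta}((\overline{\psi}_t)^{-1}x)$ with $I=(\min_A r_\al,\max_A r_\al)$ (Lemma \ref{Lemma: cutoff}). This leaves the flow unchanged on $I\cdot N_1$, so the displacement of $A$ survives, while the support of $\overline{H}^{I,\delta}_t$ is $\overline{\psi}_t(I^\delta\cdot N_1)$; by degree-$1$ homogeneity this gives $|\overline{H}^{I,\delta}_t|_{L^\infty(SN)}\leq e^\delta\, h_\al(A)\cdot|\overline{H}_t\circ\overline{\psi}_t|_{L^\infty(N_1)}$, so the height entering the estimate is $h_\al(A)$ and not $R_1$. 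Second, the quantity $|\overline{H}_t\circ\overline{\psi}_t|_{L^\infty(N_1)}=\max_x|H_t(\psi_t(x))|/\la_{\psi_t}(x)$ is still not $|H_t|_{L^\infty(N)}$ because of the conformal factors $\la_{\psi_t}$; this is repaired by first applying Usher's trick (Proposition \ref{Proposition: Usher's trick}) to replace $\overline{H}$ by a homogeneous $\overline{K}$ with the same time-one map and $\overline{K}(t,\phi^t_{\overline{K}}(x))=\overline{H}(1-t,x)$, whence $|\overline{K}_t\circ\psi^t_{\overline{K}}|_{L^\infty(N_1)}=|H_{1-t}|_{L^\infty(N)}$. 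Without both the moving cutoff and the Usher step your argument only yields the weaker, isotopy-dependent estimate, not the stated inequality $|\psi|_\al\geq\tfrac14\widehat{c}_\al(A)$.
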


\medskip

\begin{rmk}\label{Remark: extension to non-compact}
Theorem \ref{Theorem: cocycle norm} and Proposition \ref{Proposition: energy-capacity} remain true in the non-compact setting, where $\G$ denotes the identity component $\Cont_{c,0}(N,\xi)$ of the group of compactly supported contactomorphisms of $N.$ 
\end{rmk}

\medskip

\begin{rmk}
Proposition \ref{Proposition: energy-capacity} strengthens \cite[Theorem 1.1]{SpaethMullerI} of M\"{u}ller-Spaeth, combining the use of the energy-capacity inequality of Lalonde-McDuff \cite{LalondeMcDuffEnergy} in the symplectization with techniques like Proposition \ref{Proposition: Usher's trick} and Lemma \ref{Lemma: cutoff}. Consequently a few results in \cite{SpaethMullerI} can be strengthened. For example \cite[Proposition 7.1]{SpaethMullerI} can be strengthened to the following Proposition \ref{Proposition: $C^0$-uniqueness}. It is interesting to see which other applications to questions in $C^0$-contact topology the contact Hofer metric can have. This shall be investigated elsewhere.
\end{rmk}

\medskip

\begin{prop}\label{Proposition: $C^0$-uniqueness}
Assume a sequence $\{\psi_i \in \G\}_{i \in \Z_{>0}}$ of contactomorphisms, a contactomorphism $\psi \in \G$ and a map $\phi: N \to N$ satisfy 
\begin{enumerate}[label=(\roman{*}), ref=(\roman{*})]
\item \label{item: C0 Hofer} $d_\al(\psi_i,\psi) \to 0,$
\item \label{item: C0 uniform} $\psi_i \to \phi$ uniformly, 
\end{enumerate}
as $i \to \infty.$ Then $\phi = \psi.$
\end{prop}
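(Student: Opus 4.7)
The plan is to reduce the statement to the case $\psi = 1$ via the right-invariance of $d_\al,$ and then to invoke Proposition \ref{Proposition: energy-capacity} applied to a small product neighborhood in the symplectization in order to contradict $d_\al(\psi_i,\psi) \to 0.$

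First, by Remark \ref{Remark: relative distance to Quant}, the metric $d_\al$ is right-$\G$-invariant, so setting $\eta_i := \psi_i \psi^{-1}$ gives $|\eta_i|_\al = d_\al(\psi_i,\psi) \to 0.$ Since $\psi^{-1}$ is a fixed homeomorphism of $N$ and $\psi_i \to \phi$ uniformly, we also obtain $\eta_i \to \phi' := \phi \psi^{-1}$ uniformly, and $\phi'$ is continuous as a uniform limit of continuous maps. It suffices to prove $\phi' = 1.$

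Suppose for contradiction that $\phi'(x_0) \neq x_0$ for some $x_0 \in N.$ By continuity of $\phi',$ pick a small open Darboux ball $V \ni x_0$ such that $\phi'(\overline{V})$ has positive distance from $\overline{V}$ in a fixed background metric; uniform convergence $\eta_i \to \phi'$ then forces $\eta_i(\overline{V}) \cap \overline{V} = \emptyset$ for all sufficiently large $i.$ Consider the compact subset $A := \overline{V} \times [1,2] \subset N \times \R_{>0} \cong SN.$ From the formula $\overline{\eta_i}(x,r) = (\eta_i(x), r/\la_{\eta_i}(x)),$ the first coordinate of $\overline{\eta_i}(A)$ lies in $\eta_i(\overline{V}),$ which is disjoint from $\overline{V};$ hence $\overline{\eta_i}$ displaces $A$ in $SN.$

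It remains to bound $\widehat{c}_\al(A)$ from below, independently of $i.$ Applying the symplectic Darboux theorem to $(SN, \om_{can})$ at any point $(x_0, 3/2) \in V \times (1,2),$ we obtain a symplectic embedding $B(u_0) \hookrightarrow V \times (1,2) \subset A$ for some $u_0 > 0;$ since $h_\al(A) = 2,$ this gives $\widehat{c}_\al(A) \geq u_0/2 > 0.$ Proposition \ref{Proposition: energy-capacity} then yields $|\eta_i|_\al \geq u_0/8$ for all large $i,$ contradicting $|\eta_i|_\al \to 0.$ The only real subtlety is the reduction step, which relies on right-invariance (but not left-invariance) of $d_\al$ together with the continuity of $\phi';$ the geometric content is entirely absorbed into the energy-capacity inequality supplied by Proposition \ref{Proposition: energy-capacity}.
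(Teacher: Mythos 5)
Your proof is correct and follows essentially the same route as the paper: assume $\phi \neq \psi$, deduce that $\psi_i\psi^{-1}$ displaces a fixed small ball for large $i$, and contradict $d_\al(\psi_i,\psi)\to 0$ via the energy--capacity inequality. The only cosmetic difference is that you apply Proposition \ref{Proposition: energy-capacity} directly to an explicit compact set $\overline{V}\times[1,2]\subset SN$, whereas the paper invokes the packaged Corollary \ref{Corollary: contact energy-capacity} for the ball in $N$.
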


\medskip

For a subset $D \subset N$ with non-empty interior, define its {\em contact $\al$-capacity} as \[\til{c}_\al(D) = \wh{c}_\al(\pi^{-1}(D)) = \sup_{A \subset \pi^{-1}(D)} \wh{c}_\al(A) > 0,\] the supremum running over all compact subsets $A \subset \pi^{-1}(D).$ Moreover, we say that $\psi \in \G$ {\em displaces} $D$ if $\psi(D) \cap \overline{D} = \emptyset,$ and define the $\al$-displacement energy of $D$ to be \[E_\al(D) = \inf\{\,|\psi|_\al \;|\; \psi\; \text{displaces}\; D\}.\]
Then Proposition \ref{Proposition: energy-capacity} immediately implies the following $\al$-dependent {\em energy-capacity inequality} on $N.$ 

\medskip

\begin{cor}\label{Corollary: contact energy-capacity} \[E_\al(D) \geq \frac{1}{4}\til{c}_\al(D).\]\end{cor}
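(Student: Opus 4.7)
The plan is to deduce the corollary directly from Proposition \ref{Proposition: energy-capacity} by lifting a displacement of $D$ in $N$ to a displacement of $\pi^{-1}(D)$ in $SN$, and then restricting to compact subsets.

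First I would observe that for any $\psi \in \G$ the lift $\overline{\psi}$ to $SN \cong N \times \R_{>0}$ has the form $\overline{\psi}(x,r) = (\psi(x), r/\la_\psi(x))$, so since $\la_\psi(x) > 0$, the fiber $\pi^{-1}(\{x\})$ is sent bijectively onto $\pi^{-1}(\{\psi(x)\})$. Consequently $\overline{\psi}(\pi^{-1}(D)) = \pi^{-1}(\psi(D))$ for any subset $D \subset N$, and the displacement hypothesis $\psi(D) \cap \overline{D} = \emptyset$ implies
\[\overline{\psi}(\pi^{-1}(D)) \cap \pi^{-1}(\overline{D}) = \pi^{-1}(\psi(D) \cap \overline{D}) = \emptyset.\]

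Next I would fix an arbitrary compact subset $A \subset \pi^{-1}(D)$. Then $A \subset \pi^{-1}(\overline{D})$ and $\overline{\psi}(A) \subset \pi^{-1}(\psi(D))$, so $\overline{\psi}(A) \cap A = \emptyset$; that is, $\overline{\psi}$ displaces the compact set $A$. Proposition \ref{Proposition: energy-capacity} then gives $|\psi|_\al \geq \tfrac{1}{4}\widehat{c}_\al(A)$. Taking the supremum over all compact $A \subset \pi^{-1}(D)$ yields $|\psi|_\al \geq \tfrac{1}{4}\til{c}_\al(D)$ by the very definition of $\til{c}_\al(D)$. Finally, taking the infimum over all $\psi \in \G$ that displace $D$ produces the inequality $E_\al(D) \geq \tfrac{1}{4}\til{c}_\al(D)$, as desired.

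There is essentially no obstacle here: the whole argument is a formal consequence of Proposition \ref{Proposition: energy-capacity} and the definitions of $E_\al$ and $\til{c}_\al$. The only minor point to verify is that the displacement hypothesis on $N$ genuinely lifts to a displacement in $SN$, which is immediate from the fact that $\overline{\psi}$ preserves fibers of $\pi: SN \to N$ setwise (mapping the fiber over $x$ onto the fiber over $\psi(x)$).
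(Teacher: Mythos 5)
Your argument is correct and is exactly the deduction the paper intends: the paper states the corollary as an immediate consequence of Proposition \ref{Proposition: energy-capacity} and gives no separate proof, and your chain (fiberwise form of the lift $\overline{\psi}$, hence displacement of every compact $A \subset \pi^{-1}(D)$, then the supremum over $A$ and the infimum over $\psi$) is the standard way to fill it in.
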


\begin{rmk}\label{Remark: contact capacity C0 continuous}
Note that in fact, at least for open $D \subset N,$ $\til{c}_\al(D) = c(\pi^{-1}(D) \cap \{r<1\}),$ and therefore $\psi \mapsto \til{c}_\al(\psi(D))$ is a $C^0$-continuous function on $\G.$
\end{rmk}

\medskip

\begin{rmk}\label{Remark: contact capacity bounded}
It is straightforward to show that in case when $N$ is closed the contact $\al$-capacity $\widehat{c}_\al$ (and hence $\widetilde{c}_\al$) is universally bounded. Indeed comparing the Gromov radius $c$ with volume we have for any compact subset $A \subset SN$ the estimate \[\widehat{c}_\al(A) \leq \frac{\pi}{(v_{2n+2})^{1/{n+1}}} \vol(N,\al (d\al)^n)^{1/{n+1}},\] where $v_{2n+2}$ is the volume of the unit ball $B \subset \R^{2n+2}.$ In the non-compact setting this universal bound disappears, being replaced by \[\widehat{c}_\al(A) \leq \frac{\pi}{(v_{2n+2})^{1/{n+1}}} \vol(\pi(A),\al (d\al)^n)^{1/{n+1}},\] where $\pi:SN \to N$ is the natural projection.
\end{rmk}

\medskip

Remark \ref{Remark: contact capacity bounded} brings us to the following interesting question.

\medskip
\begin{qtn}\label{Question: unbounded}
For which contact manifolds with a global contact form $(N,\xi,\al)$ is the norm $|\cdot|_\al$ unbounded, that is \[\sup_{\psi \in \G} |\psi|_\al = \infty?\]
\end{qtn}

An example of such a manifold is a standard prequantization space $(P_1,\al_1)$ of the two-torus $(T^2,\om_{std})$ \cite{KirillovQuantization}. For more partial results on this question see Proposition \ref{Proposition: upper bound}. Put $\xi_1 = \Ker \al_1$ for the contact structure on $P_1.$

\medskip
\begin{prop}\label{Proposition: example of unboundedness}
Let $H \in \sm{T^2,\R}$ be an autonomous normalized Hamiltonian with the non-contractible closed curve $\{0\} \times S^1$ in $T^2$ as a component of a regular level set. Consider contact flow $\{\psi^t=\psi^t_{\pi^* H}\}_{t>0}$ generated by the lift $\pi^* H$ to $P_1$ of the Hamiltonian $H.$ This flow is a canonical lift to $P_1$ of the Hamiltonian flow of $H.$ Then \[|\psi^t|_{\al_1} \geq \mathrm{const} \cdot t,\] for a constant $\mathrm{const} >0.$
\end{prop}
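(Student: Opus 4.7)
The plan is to combine three ingredients: (i) the strict nature of the contact flow $\psi^t$, so that its lift to the symplectization acts trivially in the radial direction; (ii) a Polterovich-type linear lower bound on the Hamiltonian Hofer norm $\|\phi^t_H\|$ on $(T^2,\om_{std})$, which holds for autonomous Hamiltonians possessing a non-contractible closed orbit in a regular level set; and (iii) a comparison of the form $|\psi^t|_{\al_1} \gtrsim \|\phi^t_H\|_{\mathrm{Hofer}(T^2)}$, established via the energy-capacity inequality (Proposition \ref{Proposition: energy-capacity}) after lifting to an appropriate cover of $P_1$.

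For (i), the Reeb vector field $R_{\al_1}$ is tangent to the fibers of the prequantization $\pi:P_1\to T^2$, while $\pi^*H$ is pulled back from the base, so $\pi^*H$ is $R_{\al_1}$-invariant. Hence $\psi^t\in\cH_{\al_1}$ is a strict contactomorphism with $\lambda_{\psi^t}\equiv 1$; its lift to the symplectization is $\overline{\psi^t}(x,r) = (\psi^t(x), r)$, preserving each horizontal slice, and $\pi\circ\psi^t = \phi^t_H\circ\pi$. For (ii), a classical theorem of Polterovich gives, under the stated hypothesis on $H$, the bound $\|\phi^t_H\|_{\mathrm{Hofer}(T^2,\om_{std})} \geq c_0 \cdot t$ for all $t$ sufficiently large and some $c_0>0$ depending only on $H$.

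For (iii), the strategy is as follows. Let $\widehat{P_1}\to P_1$ denote the infinite cyclic cover obtained by unwinding the non-contractible loop $\{0\}\times S^1 \subset T^2$; in this cover, $\pi^*H$ lifts to a periodic Hamiltonian whose contact flow $\widehat{\psi^t}$ translates the basepoint $\widehat{p}_0$ of $p_0\in\gamma$ linearly along the unwound direction at rate $|X_H(p_0)|\neq 0$. In Darboux-type coordinates on the symplectization $S\widehat{P_1}$, where the symplectic form takes the standard shape $dr\wedge d\theta + ds\wedge dq$ (with $s = rp$), construct a compact subset $A_t$ centered on the lifted orbit segment of length $\propto t$, whose normalized Gromov capacity $\wh{c}_{\widehat\al_1}(A_t)$ grows in $t$, and which is displaced by $\overline{\widehat{\psi^t}}$. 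Any isotopy in $\G = \Cont_0(P_1)$ ending at $\psi^t$ lifts to an isotopy in $\Cont_0(\widehat{P_1})$ ending at $\widehat{\psi^t}$ with the same contact Hamiltonian; after approximating by a compactly supported isotopy via the cutoff Lemma \ref{Lemma: cutoff} referenced in the paper, the non-compact energy-capacity inequality (Remark \ref{Remark: extension to non-compact}) applied to $A_t$ yields the desired linear bound.

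\textbf{Main obstacle.} The central technical difficulty is ensuring that the normalized Gromov radius $\wh{c}_{\widehat\al_1}(A_t)$ really does grow linearly in $t$, despite the a priori volume-type bound of Remark \ref{Remark: contact capacity bounded} which would only suggest growth of order $\vol(\pi(A_t))^{1/(n+1)}$. Overcoming this requires a careful choice of $A_t$ exploiting both the unbounded base direction opened up by passing to the cover and the product structure of the symplectization in the Darboux coordinates above, so that the Gromov capacity of $A_t$ scales with the length of the lifted orbit rather than with a root of its volume. A secondary subtlety is the cutoff step in the descent: one must truncate the lifted isotopy (which is not compactly supported in $\widehat{P_1}$) to a compactly supported one without blowing up its Hofer length, which is where Lemma \ref{Lemma: cutoff} and the Usher-type trick (Proposition \ref{Proposition: Usher's trick}) referenced elsewhere in the paper enter essentially.
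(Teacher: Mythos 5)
Your overall strategy -- displace a set of linearly growing Gromov radius in the symplectization of a cover of $P_1$ and invoke the energy--capacity inequality after a cutoff -- is the same as the paper's, but two of your steps contain genuine gaps. First, the choice of cover. You unwind only the base loop $\{0\}\times S^1$, so in your cover $\widehat{P_1}$ the fiber coordinate $\theta$ remains periodic. In the Darboux splitting $\om = dr\wedge d\theta + dX\wedge dy$ the $(r,\theta)$ factor is then an annulus, and since the height constraint forces $r<1$ (otherwise $h_{\al}(A_t)$ grows and kills the normalization $\wh{c} = c/h$), that factor has bounded area. A product set $A_t = B(t)\times B'(t)$ has Gromov radius at most $\min$ of the two areas, so $c(A_t)$ stays bounded no matter how long you make the $(X,y)$ rectangle: the ``central technical difficulty'' you flag is not overcome by your setup, and I do not see how it can be with this cover. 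The paper passes instead to the universal cover of $P_1$ restricted over a neighbourhood $U$ of the curve, which unwinds $\theta$ as well; the $(r,\theta)$ factor becomes a strip $(\delta,1-\delta)\times\R$, a long thin rectangle $B'(t')$ of area $t'$ fits inside it with $r<1$, and $A(t') = B(t')\times B'(t')$ then has $c(A(t'))\geq t'$ while $h(A(t'))<1$.

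Second, the descent step. Your assertion that any isotopy in $\Cont_0(P_1)$ ending at $\psi^t$ lifts to an isotopy of the cover ending at the \emph{same} lift $\widehat{\psi^t}$ is false as stated: two isotopies with the same endpoint differ by a loop $\gamma$ based at the identity, and their lifted endpoints differ by the deck transformation $D(ev([\gamma]))$, where $ev:\pi_1(\Cont_0(P_1,\xi))\to\pi_1(P_1)$ is the orbit evaluation map. An adversarial deck translation along the unwound $y$-direction could move $\widehat{\psi^t}(A_t)$ back onto $A_t$ and destroy the displacement, so this must be ruled out. The paper does so by computing that $\mathrm{image}(ev)$ lies in the centre $\langle z\rangle$ of the Heisenberg group $\pi_1(P_1)$, where $z$ is the class of the Reeb fibre; such deck transformations act trivially on the projection to the cover of $T^2$ (and on the $(X,y,r)$ coordinates), so displacement survives. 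You need an argument of this kind. Finally, a minor point: your ingredient (ii), a comparison $|\psi^t|_{\al_1}\gtrsim\|\phi^t_H\|_{\mathrm{Hofer}(T^2)}$, is essentially the open Question on whether $\nu_1,\nu_2$ are equivalent to the Hofer norm; it is neither needed nor available here -- one runs the Polterovich-type displacement argument directly in the symplectization of the cover, as you in effect begin to do in (iii).
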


%

\medskip
\begin{rmk}
The same argument works for prequantizations of a surface $\Sigma$ of higher genus and a simple closed curve $\gamma$ of infinite order in $\pi_1(\Sigma)$ instead of $(T^2,\{0\} \times S^1).$ Moreover, a very similar argument applies to show that $|[\{\phi^{s \cdot t}_R\}_{s \in [0,1]}]|_\al \geq \mathrm{const} \cdot t,$ for $t > 0.$
\end{rmk}

\medskip
Similarly to the non-degeneracy of the contact Hofer metric, we deduce from Proposition \ref{Proposition: energy-capacity} the following basic topological property of $\cH$ in the topology defined by the distance function $d_\al.$

\bs
\begin{prop}\label{Proposition: Quant is closed}
If $\psi \in \G \setminus \cH,$ then there exists $\varepsilon = \vareps{\psi,\al}>0,$ such that $d_\al(\psi,\phi) \geq \varepsilon$ for all $\phi \in \cH.$ In other words, $\cH_\al$ is closed in the topology defined by $d_\al.$
\end{prop}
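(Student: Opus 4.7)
The plan is to apply the energy--capacity inequality (Proposition~\ref{Proposition: energy-capacity}) to obtain a uniform lower bound on $|\psi\phi^{-1}|_\al$ as $\phi$ ranges over $\cH$. By the right-$\G$-invariance of $d_\al$ (Remark~\ref{Remark: relative distance to Quant}), $d_\al(\psi,\phi) = |\psi\phi^{-1}|_\al$, so it suffices to produce $\varepsilon = \varepsilon_{\psi,\al} > 0$ for which $|\psi\phi^{-1}|_\al \geq \varepsilon$ for every $\phi \in \cH$.

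First, since $\psi \notin \cH$, the conformal factor $\la_\psi$ takes some value $\neq 1$ at a point $x_0 \in N$; I will treat the case $\la_\psi(x_0) > 1$, the opposite case being entirely symmetric. Pick $\delta \in (0,1)$ and an open $U \ni x_0$ with compact closure such that $\la_\psi \geq 1+\delta$ on $\overline{U}$. For $\phi \in \cH$, set $\theta = \psi\phi^{-1}$; since $\la_\phi \equiv 1$, one has $\overline{\phi}(x,r) = (\phi(x),r)$ and thus $\overline{\theta}(z,s) = (\psi(\phi^{-1}(z)),\, s/\la_\psi(\phi^{-1}(z)))$. Define the compact set
\[ A_\phi = \overline{\phi(U)} \times [1,\, 1+\delta/2] \subset SN \cong N \times \R_{>0}. \]
For $(z,s) \in A_\phi$, writing $w = \phi^{-1}(z) \in \overline{U}$, the $r$-coordinate of $\overline{\theta}(z,s)$ equals $s/\la_\psi(w) \leq (1+\delta/2)/(1+\delta) < 1$, so $\overline{\theta}(A_\phi) \cap A_\phi = \emptyset$: the lift $\overline{\theta}$ displaces $A_\phi$.

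Next, I would control the quantity $\widehat{c}_\al(A_\phi) = c(A_\phi)/(1+\delta/2)$ uniformly in $\phi$. The key observation is that $\overline{\phi}$ is a symplectomorphism of $SN$ preserving the level sets of $r$, carrying $\overline{U} \times [1,1+\delta/2]$ symplectomorphically onto $A_\phi$. Hence $c(A_\phi) = c(\overline{U} \times [1,1+\delta/2]) =: c_0 > 0$, a constant depending only on $\psi$, $U$, $\delta$, so that Proposition~\ref{Proposition: energy-capacity} yields
\[ |\psi\phi^{-1}|_\al \geq \tfrac{1}{4}\,\widehat{c}_\al(A_\phi) \geq \frac{c_0}{4(1+\delta/2)} =: \varepsilon > 0 \]
uniformly in $\phi \in \cH$, completing the argument.

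The main point of concern is precisely the uniformity of the lower bound on $c(A_\phi)$: a priori the subset $\phi(U) \subset N$ could be wildly distorted as $\phi$ varies, but the symplectic nature of the strict-contact lift $\overline{\phi}$ -- the fact that it is an $r$-preserving symplectomorphism of $SN$ -- renders $c(A_\phi)$ independent of $\phi \in \cH$, which is what makes the whole approach work.
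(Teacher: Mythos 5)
Your proof is correct and follows essentially the same route as the paper: exploit that $\la_\psi$ is bounded away from $1$ on some ball, build a compact box in $SN$ that the relevant lift pushes off itself in the $r$-direction, and invoke Proposition~\ref{Proposition: energy-capacity}, with the strictness of $\phi$ (i.e.\ $r$-preservation of $\overline{\phi}$) guaranteeing uniformity of the capacity bound. The only difference is bookkeeping: the paper works with $\overline{\phi}\,\overline{\psi}^{-1}$ and uses that precomposition with the $r$-preserving $\overline{\phi}$ does not change the $r$-range of the image, whereas you push the box forward by $\overline{\phi}$ and use symplectic invariance of the Gromov radius -- both are the same idea.
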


This proposition establishes the lower bound \[d_\al(\psi,\cH_\al) = \inf_{\phi \in \cH} d_\al(\psi,\phi) \geq \varepsilon,\] raising the following question.

\medskip
\begin{qtn}\label{Question: contactomorphisms far from Quant}
For which contact manifolds $(N,\xi)$ with a global contact form $\al$ there are contactomorphisms $\psi \in \G$ that are arbitrarily $d_\al$-far from $\cH_\al,$ or in other words, when \[\sup_{\psi \in \G} d_\al(\psi,\cH_\al) = \infty?\]
\end{qtn}

This question, for a different metric \cite{ZapolskyMetric} on $\G = \Cont_{0,c}(T^* B \times S^1)$ where $B$ is a closed connected manifold of positive dimension, was asked by F. Zapolsky \cite{ZapolskyPrivate}. 

\medskip
Next we turn to a different functional on the group $\G$ and on its universal cover $\til{\G},$ coming from a different way to express the Hofer norm in the usual Hamiltonian setting.  

\bs
\begin{df}\label{Definition: Osculation norm}
Given a global contact form $\al,$ we define for a contactomorphism $\psi \in \G$ and for an element $\til{\psi} \in \til{\G}$ in the universal cover of $\G,$ their oscillation Hofer energy  $|\psi|^{\osc}_\al$ and $|\til{\psi}|^{\osc}_\al$ as \[ \inf \int_{0}^{1}\osc_N(H_t)dt,\] where \[\osc_N(H_t) = \max_N(H_t) -\min_N(H_t),\] and the infimum runs over all isotopies $\{\psi_t\}$ with $\psi_1 = \psi$ in the first case and all isotopies in class $\til{\psi}$ in the second case. 
\end{df}

It is curious that while this oscillation energy functional does not turn out to be equivalent to the $L^\infty$ energy functional like in the Hamiltonian case, it does, in contrast to the Hamiltonian case, turn out to have a closed form expression in terms of the $L^\infty$ energy.
\bs
\begin{prop}\label{Proposition: Osc via L infty}
For each $\psi \in \G,$ \[\frac{1}{2}|\psi|^{\osc}_\al = d_\al(\psi,\cR) = \inf_{t \in \R} d_\al(\psi,\phi^t_R).\] 
\end{prop}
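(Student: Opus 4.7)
The plan is to establish the two inequalities $d_\al(\psi,\cR) \leq \tfrac{1}{2}|\psi|^{\osc}_\al$ and $\tfrac{1}{2}|\psi|^{\osc}_\al \leq d_\al(\psi,\cR)$ via direct path constructions; the second equality in the statement is immediate, being the definition of distance to the subset $\cR = \{\phi_R^t\}_{t \in \R} \subset \G.$ Two elementary facts are used throughout. First, for any smooth $H$ on $N,$ $\osc_N(H) = 2 \inf_{c \in \R} |H - c|_{L^\infty(N)},$ the infimum being realized at $c = \tfrac{1}{2}(\max_N H + \min_N H).$ Second, a direct computation from the definition of the contact Hamiltonian gives the composition formula: if paths $\psi_t, \phi_t$ in $\G$ have contact Hamiltonians $A_t, B_t,$ then $\psi_t \phi_t$ has contact Hamiltonian $A_t + (\la_{\psi_t} \cdot B_t) \circ \psi_t^{-1}.$ Because the Reeb flow is $\al$-strict ($\la_{\phi_R^s} \equiv 1$), composing a given isotopy \emph{on the left} with a reparametrized Reeb flow shifts the Hamiltonian by a time-dependent constant and precomposes with a diffeomorphism of $N,$ neither of which affects $L^\infty$ norm or oscillation.

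For the upper bound, fix $\eps > 0$ and an isotopy $\{\psi_t\}$ from $1$ to $\psi$ with contact Hamiltonian $H_t$ satisfying $\int_0^1 \osc_N(H_t)\, dt \leq |\psi|^{\osc}_\al + \eps.$ Set $c_t = \tfrac{1}{2}(\max_N H_t + \min_N H_t)$ and $C(t) = \int_0^t c_s\, ds.$ The isotopy $t \mapsto \phi_R^{-C(t)} \psi_t$ runs from $1$ to $\phi_R^{-C(1)}\psi;$ by the composition formula its contact Hamiltonian is $-c_t + H_t \circ \phi_R^{C(t)},$ whose $L^\infty$ norm equals $|H_t - c_t|_{L^\infty(N)} = \tfrac{1}{2}\osc_N(H_t).$ Using the left $\cH_\al$-invariance of $d_\al$ from Remark \ref{Remark: relative distance to Quant} and the inclusion $\cR \subset \cH_\al,$ this gives $d_\al(\psi, \phi_R^{C(1)}) = |\phi_R^{-C(1)}\psi|_\al \leq \tfrac{1}{2}(|\psi|^{\osc}_\al + \eps),$ and letting $\eps \to 0$ yields $d_\al(\psi,\cR) \leq \tfrac{1}{2}|\psi|^{\osc}_\al.$

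For the reverse inequality, fix $\eps > 0,$ $t_0 \in \R,$ and a path $\{\phi_t\}$ from $1$ to $\phi_R^{-t_0}\psi$ with Hamiltonian $F_t$ obeying $\int_0^1 |F_t|_{L^\infty(N)}\, dt \leq |\phi_R^{-t_0}\psi|_\al + \eps.$ Prepending the Reeb flow produces the isotopy $\phi_R^{t t_0} \phi_t$ from $1$ to $\psi,$ with contact Hamiltonian $t_0 + F_t \circ \phi_R^{-t t_0};$ its oscillation is $\osc_N(F_t) \leq 2|F_t|_{L^\infty(N)}.$ Integrating gives $|\psi|^{\osc}_\al \leq 2(|\phi_R^{-t_0}\psi|_\al + \eps) = 2(d_\al(\psi,\phi_R^{t_0}) + \eps).$ Taking $\eps \to 0$ and the infimum over $t_0 \in \R$ finishes the proof.

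No serious difficulty is expected; the whole argument is bookkeeping around the composition formula. The only subtlety is placing the Reeb flow \emph{on the left} of the given isotopy in both directions --- otherwise the conformal factor $\la_{\psi_t}$ of the non-strict factor intrudes into the Hamiltonian and the $L^\infty$ control is lost. This is also why one uses the left $\cH_\al$-invariance of $d_\al$ to rewrite $d_\al(\psi, \phi_R^{t_0})$ as $|\phi_R^{-t_0}\psi|_\al$ rather than as $|\psi \phi_R^{-t_0}|_\al.$
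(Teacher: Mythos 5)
Your proof is correct and follows essentially the same route as the paper: both arguments reduce $d_\al(\psi,\cR)$ to $\inf_\eta|\phi_R^\eta\psi|_\al,$ exploit the fact that left-composition with a reparametrized Reeb flow shifts the contact Hamiltonian by a time-dependent constant (without introducing a conformal factor), and then apply the pointwise identity $\inf_{c}|H_t-c|_{L^\infty(N)}=\tfrac{1}{2}\osc_N(H_t)$ at the optimal $c_t=\tfrac12(\max_N H_t+\min_N H_t).$ The paper packages this as a single chain of equalities by parametrizing all paths to $\phi_R^\eta\psi$ as $\phi_R^{B(t)}\phi^t_H,$ whereas you split it into two inequalities, but the content is the same.
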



This proposition has the following corollary.

\medskip
\begin{cor}\label{Corollary: maximal non-degeneracy of Osc}
For $\psi \in \G,$ $|\psi|^{\osc}_\al = 0$ if and only if $\psi$ belongs to the closure $\overline{\cR}$ of $\cR \subset \cG$ in the topology defined by $d_\al.$
\end{cor}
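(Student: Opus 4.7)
The plan is to deduce the corollary directly from Proposition \ref{Proposition: Osc via L infty} together with the fact, established in Theorem \ref{Theorem: cocycle norm}, that $d_\al$ is a genuine (non-degenerate, symmetric, subadditive) metric on $\G$. Since Proposition \ref{Proposition: Osc via L infty} gives the identity
\[ \tfrac{1}{2}\,|\psi|^{\osc}_\al \;=\; d_\al(\psi,\cR) \;=\; \inf_{t \in \R} d_\al(\psi,\phi^t_R), \]
the statement $|\psi|^{\osc}_\al = 0$ is equivalent to the condition that the $d_\al$-distance from $\psi$ to the one-parameter subgroup $\cR \subset \G$ vanishes.

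The next step is the standard metric-topological observation: in any metric space $(X,d)$, a point $x \in X$ lies in the closure $\overline{A}$ of a subset $A \subset X$ if and only if $d(x,A) = \inf_{a \in A} d(x,a) = 0$. Since $d_\al$ is a bona fide metric on $\G$ by Theorem \ref{Theorem: cocycle norm}, applying this to $A = \cR$ and $x = \psi$ yields
\[ d_\al(\psi,\cR) = 0 \;\iff\; \psi \in \overline{\cR}, \]
where the closure is taken in the topology defined by $d_\al$. Concretely, the ``$\Leftarrow$'' direction picks a sequence $\phi^{t_i}_R \in \cR$ with $d_\al(\psi,\phi^{t_i}_R) \to 0$ and uses the definition of the infimum, while the ``$\Rightarrow$'' direction builds such a sequence by choosing, for each $i$, some $t_i$ with $d_\al(\psi,\phi^{t_i}_R) < 1/i$.

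Combining the two equivalences gives $|\psi|^{\osc}_\al = 0 \iff \psi \in \overline{\cR}$, which is the desired conclusion. There is no genuine obstacle here: the analytic content has been absorbed into Proposition \ref{Proposition: Osc via L infty}, and the remaining argument is a one-line metric-space triviality. The only minor subtlety worth noting is that the topology on $\G$ is taken to be that induced by $d_\al$; by Lemma \ref{Lemma: equivalent metrics} this topology is in fact independent of the choice of global contact form $\al$, so the set $\overline{\cR}$ depends only on the Reeb vector field $R_\al$ (through $\cR_\al$) and not on the choice used to measure distances.
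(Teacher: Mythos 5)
Your proposal is correct and matches the paper's (implicit) argument: the corollary is stated as an immediate consequence of Proposition \ref{Proposition: Osc via L infty}, with the only remaining content being the standard fact that in a metric space $d_\al(\psi,\cR)=0$ if and only if $\psi\in\overline{\cR}$. Nothing further is needed.
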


This corollary brings forth the following question. 
\medskip

\begin{qtn}\label{Question: is Reeb closed}
For which contact forms $\al,$ $\overline{\cR}=\cR,$ that is $\cR_\al$ is a closed subgroup of $\G$ in the topology defined by $d_\al$?
\end{qtn}

\medskip
\begin{rmk}\label{Remark: Reeb closed} It is immediate that those contact manifolds with a global contact form for which the Reeb flow is periodic, for example prequantization spaces, satisfy this property. Moreover, by Proposition \ref{Proposition: Quant is closed}, we see that $\overline{\cR} \subset \cH,$ edging closer to an answer to this question. Indeed by an argument of M\"{u}ller-Spaeth \cite{SpaethMullerIII} and Casals-Spacil \cite{CasalsSpacil}, whenever the Reeb flow $\{\phi^t_R\}$ of $\al$ has a dense orbit, we have $\cH = \cR,$ implying $\overline{\cR}=\cR.$ In the case of irrational ellipsoid with the standard contact form, it is easy to see that $T = \overline{\cR}$ is a torus and $\cR \subset T$ is an irrational one-parametric subgroup. It would be interesting to study the closure of $\cR$ in more general situations. \end{rmk} 

In \cite{MargheritaTranslatedEarly,MargheritaTranslated} Sandon introduced the notion of a translated point of a contactomorphism $\psi \in \G$ and observed that the number of translated points of a $C^1$-small (i.e. $C^1$-close to the identity transformation) contactomorphism $\psi$ is at least the minimal number of critical points of a function on $N$ and for generic such $\psi,$ it is at least the minimal number of critical points of a Morse function on $N.$ Consequently she conjectured the same conclusion to hold for all $\psi \in \G.$ Here we study the following weak homological version of Sandon's conjecture.

\medskip

\begin{conj}\label{Conjecture: Sandon}
Every contactomorphism $\psi \in \G$ has a translated point, and generic $\psi \in \G$ have at least $\dim H_*(N,\Z/(2))$ translated points.
\end{conj}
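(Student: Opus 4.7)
My plan is to imitate the Floer-theoretic proof of the Arnold conjecture, with translated points playing the role of fixed points and the Rabinowitz action functional on the symplectization $SN$ playing the role of the Hamiltonian action functional on the loop space. For an isotopy $\{\psi_t\}$ ending at $\psi,$ I lift to the $\R_{>0}$-equivariant, degree-$1$-homogeneous Hamiltonian $F_t$ on $SN$ generating $\{\overline{\psi_t}\};$ by Lemma \ref{Lemma: non-degenerate contactomorphism} the algebraic translated points of $\psi$ correspond exactly to critical points of the associated Rabinowitz action functional, and non-degeneracy in the sense of Definition \ref{Definition: non-degenerate contactomorphism} is precisely the Morse condition.

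I would then define a Floer chain complex $CF_*(\psi)$ over $\Z/(2)$ generated by these critical points, with differential counting finite-energy perturbed holomorphic cylinders in $SN,$ following Albers--Frauenfelder and Albers--Merry. Standard arguments should yield a Morse-type inequality $\#\{\text{algebraic translated points}\} \geq \dim HF_*(\psi).$ The crux is to identify $HF_*(\psi)$ with $H_*(N,\Z/(2))$ independently of $\psi:$ for $\psi$ close to the identity in $|\cdot|_\al,$ a continuation argument using Proposition \ref{Proposition: energy-capacity} should constrain action windows and preclude Floer trajectories from escaping into the cylindrical end, giving $HF_*(\psi) \cong HF_*(1) \cong H_*(N,\Z/(2));$ for general $\psi,$ one would chop the generating isotopy into Hofer-small pieces and stitch together continuation isomorphisms, leveraging the triangle inequality of Theorem \ref{Theorem: cocycle norm}\eqref{item: metric Property 2}. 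After accounting for the at most countable set of $\eta$'s lying over a fixed geometric translated point, this yields the conjectured $\dim H_*(N,\Z/(2))$ lower bound for generic $\psi.$

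For the existence statement for arbitrary, possibly degenerate $\psi \in \G,$ I would approximate $\psi$ by non-degenerate $\psi_i$ with $d_\al(\psi_i,\psi) \to 0$ (possible by density of non-degenerate contactomorphisms). Each $\psi_i$ carries a translated point $(x_i,\eta_i)$ by the generic case. An a priori bound on $|\eta_i|$ in terms of $|\psi_i|_\al$ (read off from the action functional) together with compactness of $N$ provides a convergent subsequence, whose limit $(x,\eta)$ is a translated point of $\psi$ by a $C^0$-continuity argument in the spirit of Proposition \ref{Proposition: $C^0$-uniqueness}.

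The main obstacle is setting up the Rabinowitz--Floer package in full generality. Because $SN$ is non-compact with a cylindrical end, sharp $C^0$-bounds on Floer trajectories are required; without the assumption of a strong exact filling (the hypothesis the paper invokes for its Chekanov-type result), the standard maximum-principle and monotonicity arguments do not immediately apply, and one must contend with holomorphic sphere bubbling via virtual perturbations. A secondary obstacle, even in the filled case, is the passage from a small-Hofer-energy window to arbitrary $\psi \in \G,$ which may well require a spectral invariant refinement to prevent cancellation of Floer generators under iterated continuation and thereby retain the full $\dim H_*(N,\Z/(2))$ bound.
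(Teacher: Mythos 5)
The statement you are trying to prove is stated in the paper as Conjecture \ref{Conjecture: Sandon}; the paper does not prove it, and it remains open. What the paper actually establishes is Theorem \ref{Theorem: translated points filling.}, which requires two extra hypotheses: that $(N,\xi)$ admits a strong exact symplectic filling, and that $\psi$ has small oscillation energy in the sense of Definition \ref{Definition: small oscillation energy}, i.e.\ $\tfrac12|\psi|^{\osc}_\al<\rho(\al)$. Your outline correctly identifies the machinery (Rabinowitz--Floer on the completion of a filling, the correspondence between critical points and algebraic translated points, the Morse condition of Lemma \ref{Lemma: non-degenerate contactomorphism}), but the two steps you flag as ``obstacles'' are not technical inconveniences --- they are precisely where the argument is not known to work, and why the general statement is a conjecture rather than a theorem.

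Concretely: (1) Without a filling, the Rabinowitz--Floer complex is not defined; the functional $\cA^F_G$ lives on the loop space of the completion $W$ of a filling $M$, not on $\cL(SN)$ alone, and compactness of its gradient trajectories uses exactness of the filling. (2) Even with a filling, your proposal to ``chop the isotopy into Hofer-small pieces and stitch together continuation isomorphisms'' fails at the threshold $\rho(\al)$: the a priori bound on the Lagrange multiplier $\eta$ along continuation trajectories is proportional to the Hofer energy of the perturbation, and once this exceeds the minimal Reeb period, closed Reeb orbits (which are critical points of the unperturbed functional) enter the relevant action window and the homology is no longer identifiable with $H_*(N,\Z/(2))$ --- this is the content and the sharpness of Theorem \ref{Theorem: Albers-Frauenfelder}. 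Composing $k$ Hofer-small pieces does not help, since the intermediate functionals detect translated points of the intermediate time-one maps, not of $\psi$, and the total energy of the concatenated continuation is the sum, not the maximum, of the pieces. (3) Finally, your Morse inequality bounds the number of \emph{algebraic} translated points $(x,\eta)$, and a single geometric translated point may be covered by infinitely many algebraic ones (e.g.\ when the Reeb orbit through $x$ is closed); passing to geometric translated points again requires confining the action to a window of width less than $\rho(\al)$, which is once more the small-energy hypothesis. Your compactness argument for degenerate $\psi$ is fine in spirit, but it presupposes the generic case, which is the part that is open.
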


The Sandon conjecture or its homological version has been shown to hold in a number of cases in \cite{MargheritaTranslatedEarly,MargheritaTranslated,AlbersMerryTranslated,AlbersMerryFuchs}. We remark that the genericity assumption, similarly to the case of the Arnol'd conjecture on Hamiltonian diffeomorphisms, should correspond to a non-degeneracy condition on the diffeomorphism $\psi$, for which our notion from Definition \ref{Definition: non-degenerate contactomorphism} is a candidate.

Proposition \ref{Proposition: Osc via L infty} suggests that if the oscillation energy of $\psi$ is small enough, then it should have translated points. To this end we have the following definition and theorem.

\medskip
\begin{df}\label{Definition: small oscillation energy}
We say that $\psi \in \G$ has {\em small oscillation energy} if $\frac{1}{2}|\psi|^{\osc}_\al < \rho(\al).$
\end{df}

\bs
\begin{thm}\label{Theorem: translated points filling.}
Assume that $(N,\xi)$ admits a strong exact symplectic filling $(M,\tau = d\overline{\beta}),$ with $\alpha = \overline{\beta}|_N$ a global contact form for $(N,\xi).$ Then every contactomorphism $\psi$ of small oscillation energy has at least one translated point, and if in addition $\psi$ is non-degenerate, then it must have at least $\dim H_*(N,\Z/(2))$ translated points.
\end{thm}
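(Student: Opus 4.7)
The plan is to reduce the statement to an existence and Morse-type estimate for critical points of a perturbed Rabinowitz action functional on the completed filling, in the spirit of Albers--Frauenfelder \cite{AlbersFrauenfelderLeafwise} and Albers--Merry \cite{AlbersMerryTranslated, AlbersMerryFuchs}.

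First I would use Proposition \ref{Proposition: Osc via L infty} to reduce to the case of a Hamiltonian with small $L^\infty$-energy. Since $\tfrac{1}{2}|\psi|^{\osc}_\al = \inf_{\tau \in \R} |\psi \phi_R^{-\tau}|_\al < \rho(\al)$, one can pick $\tau \in \R$ with $|\psi'|_\al < \rho(\al)$ for $\psi' := \psi \phi_R^{-\tau}$, and then a contact Hamiltonian $H$ generating $\psi'$ with $\int_0^1 |H_t|_{L^\infty(N)}\, dt < \rho(\al)$. A short computation using $\phi_R^* \al = \al$ shows that $(y,\eta) \mapsto (\phi_R^\tau(y),\eta - \tau)$ is a bijection between algebraic translated points of $\psi$ and those of $\psi'$ respecting the condition of Definition \ref{Definition: non-degenerate contactomorphism}, so it suffices to prove the theorem for $\psi'$ in place of $\psi$.

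Next, on the completed filling $\wh M := M \cup_N \bigl([1,\infty) \times N\bigr)$, with $\wh\beta$ extending $\overline\beta$ as $r \cdot \al$ on the cylindrical end, I would extend the degree-one homogeneous lift $r H(t,x)$ to a Hamiltonian $\wh H$ on $\wh M$ via a cutoff in the $r$-direction supported near $N_1 = \{r=1\}$, and consider the perturbed Rabinowitz action functional
\begin{equation*}
\A_H(\gamma,\eta) = -\int_0^1 \gamma^* \wh\beta \;-\; \eta \int_0^1 (r\circ\gamma - 1)\, dt \;-\; \int_0^1 \wh H(t,\gamma(t))\, dt
\end{equation*}
on the component of contractible loops in $\wh M$. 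Its critical points are precisely pairs $(\gamma,\eta)$ with $\gamma(0) \in N_1$ and $\overline{\psi'}(\gamma(0)) = \phi_R^\eta(\gamma(0))$, i.e.\ algebraic translated points of $\psi'$; and by Lemma \ref{Lemma: non-degenerate contactomorphism}, $\A_H$ is Morse when $\psi$ is non-degenerate. The small-energy hypothesis now plays its standard role: it produces uniform $L^\infty$-bounds on the Lagrange multiplier $\eta$ along all Floer trajectories, rules out bubbling of closed Reeb orbits (whose periods are all $\geq \rho(\al)$), and, together with the maximum principle on the cylindrical end, gives $C^0$-bounds. Consequently the perturbed complex $\mathrm{RFC}^H_*(\wh M)$ is well-defined and its homology is isomorphic via a monotone continuation to the unperturbed Rabinowitz--Floer homology $\mathrm{RFH}_*(\wh M)$ of Cieliebak--Frauenfelder. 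The constant Morse--Bott family at $N_1$ in the unperturbed setting is diffeomorphic to $N$ and contributes at least $H_*(N;\Z/2)$ to $\mathrm{RFH}_*(\wh M)$. Non-vanishing of this homology forces at least one algebraic translated point of $\psi'$, proving the first assertion; in the non-degenerate case the Morse inequalities give $\#\mathrm{Crit}(\A_H) \geq \dim H_*(N;\Z/2)$, and the small-energy bound ensures that algebraic translated points with $|\eta| < \rho(\al)$ project injectively to distinct translated points of $\psi'$ (as two $\eta$-values at the same base point would force a closed Reeb orbit of period less than $\rho(\al)$), yielding the required count.

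The hard part will be the compactness package for the perturbed functional $\A_H$: ruling out escape of Floer trajectories to $r = \infty$ via a sharp maximum principle, bounding $|\eta|$ along trajectories in terms of the $L^\infty$-energy of $H$, and ensuring that the $r$-direction cutoff used to define $\wh H$ introduces no spurious critical points. Each of these estimates is present in \cite{AlbersFrauenfelderLeafwise, AlbersMerryTranslated, AlbersMerryFuchs}, but the degree-one homogeneous setting makes $\wh H$ grow linearly in $r$ before cutoff, and so the maximum-principle and continuation arguments must be revisited with this linear growth in mind.
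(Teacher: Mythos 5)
Your overall strategy---shift by the Reeb flow using Proposition \ref{Proposition: Osc via L infty}, pass to the symplectization sitting inside the completed filling, and run a perturbed Rabinowitz--Floer argument---is the route the paper takes, except that the paper black-boxes the Floer theory by citing \cite[Theorems A, B and Proposition 2.20]{AlbersFrauenfelderLeafwise} and concentrates its effort on reducing to the hypotheses of those results. Where you re-derive that input, two steps fail.

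First, the homological mechanism you invoke is wrong. You claim the perturbed complex is isomorphic by continuation to the full Rabinowitz--Floer homology $\mathrm{RFH}_*(\widehat M)$ and that the Morse--Bott family of constants ``contributes at least $H_*(N;\Z/2)$'' to it, so that non-vanishing of this homology forces critical points. But $\mathrm{RFH}_*$ vanishes whenever $N$ is displaceable in the completion (Cieliebak--Frauenfelder), e.g.\ for $S^{2n-1}\subset\C^n$, while the theorem must still hold there. The constant family generates the chain complex at $\eta=0$, but its class can and does die against closed Reeb orbits in homology. The actual Albers--Frauenfelder argument is local: the hypothesis $|\cdot|_{Hofer}<\rho(\al)$ yields a uniform bound $|\eta|<\rho(\al)$ on the Lagrange multiplier along all critical points and continuation trajectories, so no closed Reeb orbit (which forces $|\eta|\geq\rho(\al)$) can interfere, and one compares the perturbed critical points with the local homology $H_*(N;\Z/2)$ of the Morse--Bott family of constants inside that window. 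Non-vanishing of the global $\mathrm{RFH}_*$ is neither available nor needed.

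Second, your cutoff step loses the key estimate. A time-independent cutoff of $rH(t,x)$ ``in the $r$-direction supported near $N_1$'' does not produce a Hamiltonian whose flow agrees with the lift $\overline{\psi}_t$ on $N_1$: the lift moves $N_1$ to the graph of $\la_{\psi_t}^{-1}$, which leaves any fixed small collar unless the conformal factors are trivial, and enlarging the collar to contain $\bigcup_t\overline{\psi}_t(N_1)$ multiplies the $L^\infty$-norm by $\sup r$ over the collar, which can push the Hofer norm of the truncated Hamiltonian above $\rho(\al)$. The paper's fix is precisely the combination of the flow-adapted moving cutoff (Lemma \ref{Lemma: cutoff}) with Usher's trick (Proposition \ref{Proposition: Usher's trick}), which converts the resulting bound by $\int_0^1|\overline H_t\circ\overline\psi_t|_{L^\infty(N_1)}\,dt$ back into $e^{\delta}\int_0^1|H_t|_{L^\infty(N)}\,dt$; without both ingredients the compactly supported Hamiltonian on $W$ either has the wrong time-one map near $N_1$ or a Hofer norm not controlled by $|\psi|_\al$. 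Two smaller points: the functional as you write it needs the time-reparametrizations $\tau_1,\tau_2$ with disjoint supports for its critical points to be exactly the algebraic translated points of the time-one map; and two algebraic translated points over the same $x$ with $|\eta_i|<\rho(\al)$ only give a closed Reeb orbit of period $|\eta_1-\eta_2|<2\rho(\al)$, so your injectivity claim does not follow as stated.
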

 
\medskip

Theorem \ref{Theorem: translated points filling.} is an analogue of, and indeed follows from the Chekanov-type theorems \cite[Theorem A, Theorem B]{AlbersFrauenfelderLeafwise} of Albers-Frauenfelder for the case of leafwise intersection points of Hamiltonian flows in an exact symplectic manifold $(W,d\beta)$ relative to bounding hypersurfaces $\Sigma$ of restricted contact type ($\al=\beta|_\Sigma$ is a contact form). A point $x \in \Sigma$ is a leafwise intersection for $\phi \in \Ham_c(W,d\beta)$ relative to $\Sigma$ if $\phi (x) \in \Sigma$ and moreover $\phi (x)$ lies in the same leaf as $x$ of the characteristic foliation $\ker (d\beta|_\Sigma),$ that is - on the same orbit as $x$ of the Reeb flow of $\al.$

\medskip
\begin{thm}[(Albers-Frauenfelder \cite{AlbersFrauenfelderLeafwise})]\label{Theorem: Albers-Frauenfelder}
If $\phi \in \Ham_c(W,d\beta)$ has Hofer norm $|\phi|_{Hofer} < \rho(\al),$ then $\phi$ has a leafwise intersection point relative to $\Sigma,$ and generic such $\phi$ have at least $\dim H_*(\Sigma,\Z/(2))$ leafwise intersection points.
\end{thm}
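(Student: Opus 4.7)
The plan is to prove the theorem via a Rabinowitz--Floer homological argument. Fix a compactly supported Hamiltonian $F \in \sm{S^1 \times W}$ generating $\phi = \phi_F^1$, and choose a defining Hamiltonian $H \in \sm{W,\R}$ with $\Sigma = H^{-1}(0)$ a regular level and $X_H|_\Sigma = R_\al$. After reparametrizing in time so that $F_t \equiv 0$ for $t \in [0, 1/2]$, pick a cutoff $\chi \in \sm{S^1,[0,1]}$ supported in $[0,1/2]$ with $\int_0^1 \chi(t)\, dt = 1$. Consider the perturbed Rabinowitz action functional
\[
\cA^{H,F}(v,\eta) = \int_0^1 v^*\overline{\beta} - \eta \int_0^1 \chi(t)\, H(v(t))\, dt - \int_0^1 F(t,v(t))\, dt
\]
on $\cL W \times \R$. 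The Euler--Lagrange equations force $v(0) \in \Sigma$, with $v$ consisting of a piece of Reeb flow of length $\eta$ followed by the Hamiltonian flow of $F$, so critical points $(v,\eta)$ are in bijection with leafwise intersection points $(x = v(0), \eta)$ of $\phi$ relative to $\Sigma$.

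The technical heart of the proof is an a priori $L^\infty$ estimate on $\eta$ along negative $L^2$-gradient trajectories $(v_s,\eta_s)$ of $\cA^{H,F}$ with action confined to a bounded window. The key identity
\[
\cA^{H,F}(v,\eta) + \int_0^1 F(t,v(t))\, dt = \cA^H(v,\eta)
\]
expresses the perturbed action as the unperturbed Rabinowitz action plus a term bounded in absolute value by $|\phi|_{Hofer}$. If $|\eta|$ were to become large along a gradient line, a compactness argument forces $v$ to approximate a closed Reeb orbit of period $\sim |\eta|$ on $\Sigma$, whose shortest representative has period $\rho(\al)$. The hypothesis $|\phi|_{Hofer} < \rho(\al)$ then prevents $|\eta|$ from ever crossing $\rho(\al)$ along the flow. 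Making this rigorous is the main obstacle: one needs exactness of $\overline{\beta}$ to rule out sphere bubbling, a maximum principle relying on the strong exact filling $(M,\tau)$ to confine trajectories to a compact region, and a careful quantitative analysis of the defect between nearly-critical loops and genuine Reeb orbits.

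With these a priori estimates in hand, for generic $F$ and a generic $\tau$-compatible almost complex structure that is cylindrical at infinity, the moduli spaces of negative gradient trajectories of $\cA^{H,F}$ are compact up to breaking, and the Rabinowitz--Floer chain complex $RFC_*(\cA^{H,F};\Z/(2))$ and its homology $RFH_*(\Sigma, W)$ are well defined. Continuation from $F$ to $0$ shows $RFH_*(\Sigma, W)$ is independent of $F$, and for exact fillings it is known to be non-zero (by Cieliebak--Frauenfelder, via the comparison with the symplectic homology of $M$; alternatively, the low-action part of the unperturbed complex recovers the Morse complex of $\Sigma$). Non-triviality of $RFH_*$ produces at least one critical point of $\cA^{H,F}$, hence one leafwise intersection point, proving the first claim. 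When $\phi$ is non-degenerate, every critical point of $\cA^{H,F}$ is Morse, and the standard Morse inequality for the Rabinowitz--Floer complex yields the count $\geq \dim H_*(\Sigma;\Z/(2))$, completing the proof.
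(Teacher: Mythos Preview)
The paper does not prove this theorem; it is quoted from Albers--Frauenfelder \cite{AlbersFrauenfelderLeafwise} and used as a black box in the proof of Theorem~\ref{Theorem: translated points filling.}. So there is no proof in the paper to compare against.

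Your sketch is a reasonable outline of the Rabinowitz--Floer argument in the original Albers--Frauenfelder paper, but the final step is imprecise in a way that matters. You write that ``non-triviality of $RFH_*(\Sigma,W)$ produces at least one critical point of $\cA^{H,F}$''. This is not how the argument goes: non-vanishing of the \emph{full} Rabinowitz--Floer homology (as computed by Cieliebak--Frauenfelder via symplectic homology) does not by itself force a critical point of the \emph{perturbed} functional in a useful action window, and in any case the full $RFH$ need not have rank $\dim H_*(\Sigma;\Z/(2))$. The actual mechanism is local: for $F=0$, the only critical points of $\cA^{H}$ with action in the window $(-\rho(\al),\rho(\al))$ form the Morse--Bott component $\Sigma \times \{0\}$ of constant loops, whose local Floer homology is $H_*(\Sigma;\Z/(2))$. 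The hypothesis $|\phi|_{Hofer} < \rho(\al)$ ensures that under the perturbation by $F$ no critical values enter or leave a slightly smaller action window, so the continuation map on the \emph{filtered} complex in that window is an isomorphism. This is what gives both existence and, in the non-degenerate case, the lower bound $\dim H_*(\Sigma;\Z/(2))$. Your parenthetical ``the low-action part of the unperturbed complex recovers the Morse complex of $\Sigma$'' is the right idea; it should be the main line of the argument rather than an alternative to the Cieliebak--Frauenfelder computation.
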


The paper \cite{AlbersFrauenfelderLeafwise} was used by Albers-Merry in \cite{AlbersMerryTranslated} to prove results on translated points of contactomorphisms of contact manifolds with a strong exact filling. Techniques like Lemma \ref{Lemma: cutoff} allow us to improve their estimates on the Hofer energy and prove a sharper Chekanov-type theorem in this setting. We also note that when $\rho(\al) = +\infty,$ namely there are no closed Reeb orbits, it was shown in \cite{AlbersMerryFuchs} that Conjecture \ref{Conjecture: Sandon} holds without the requirement on the existence of a symplectic filling. This result of Albers-Fuchs-Merry and Theorem \ref{Theorem: translated points filling.} constitute reasonable evidence towards the following conjecture, removing the assumption on the existence of a strong exact symplectic filling. 

\bs
\begin{conj}\label{Conjecture: translated points general weak.}
Every contactomorphism $\psi$ of small oscillation energy has at least one translated point, and if in addition $\psi$ is non-degenerate, then it must have at least $\dim H_*(N,\Z/(2))$ translated points.
\end{conj}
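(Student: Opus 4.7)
The plan is to extend the Rabinowitz--Floer argument underlying Theorem \ref{Theorem: translated points filling.} from an ambient strong exact filling to the symplectization $SN$ itself, which is always exact albeit non-compact. By Lemma \ref{Lemma: non-degenerate contactomorphism}, algebraic translated points $(x,\eta)$ of $\psi$ are precisely the critical points of a Rabinowitz-type action functional on $C^\infty(S^1, SN) \times \R$ associated to the degree-$1$ homogeneous lift $\overline{F}$ of a generating contact Hamiltonian $F.$ By Proposition \ref{Proposition: Osc via L infty}, the hypothesis $\tfrac{1}{2}|\psi|^{\osc}_\al < \rho(\al)$ lets us choose a generating path $\{\psi^t_F\}$ with $\int_0^1 \osc_N(F_t)\, dt$ strictly below $\rho(\al),$ which will serve as the a priori energy bound controlling all Floer data.

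First I would construct a Rabinowitz--Floer homology $\mathrm{RFH}(SN, \overline{F})$ directly in the symplectization. For $\overline{F}\equiv 0$ the critical set is, up to Morse--Bott cascades, the disjoint union over $r \in \R_{>0}$ of trivial Reeb loops, and so recovers $H_*(N; \Z/2).$ A compactly supported homotopy from $\overline{F}$ to $0$ should then produce a continuation isomorphism. In the non-degenerate case the rank of the resulting chain complex bounds the number of algebraic translated points from below; quotienting by the $\Z$-action generated by $\phi^t_R,$ which permutes algebraic translated points covering a given geometric one, yields the required lower bound $\dim H_*(N;\Z/2).$ For the existence statement without non-degeneracy, one approximates $\psi$ in $C^\infty$ by non-degenerate contactomorphisms using the second-Baire-category lemma preceding Definition \ref{Definition: Hofer energy} and extracts a limit translated point from the compactness of $N.$

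The chief obstacle, and almost certainly the reason the statement is left as a conjecture, is $C^0$-compactness of the Floer moduli on the non-compact cylinder $SN \cong N \times \R_{>0}.$ In the filled setting of Theorem \ref{Theorem: translated points filling.} the filling bounds Floer cylinders inwards; in $SN$ both ends $r \to \infty$ and $r \to 0^+$ are open. At the upper end the standard cylindrical maximum principle, combined with the degree-$1$ homogeneity of $\overline{F},$ should still control escape. At the lower end, any escaping Floer cylinder would under an SFT-style compactification split off a holomorphic building whose asymptotic Reeb orbit carries action at least $\rho(\al),$ contradicting the energy budget extracted from the small-oscillation hypothesis. Making this rigorous without a filling appears to require either a full SFT neck-stretching compactification in the symplectization or a virtual fundamental cycle framework, and this is where I expect the bulk of the technical work to lie; the rest of the argument is then a fairly standard Morse-theoretic count of critical points of $\mathcal{A}_{\overline{F}}.$
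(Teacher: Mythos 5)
The statement you are trying to prove is stated in the paper as a \emph{conjecture}: the paper proves it only under the additional hypothesis that $(N,\xi)$ admits a strong exact symplectic filling (Theorem \ref{Theorem: translated points filling.}), and explicitly leaves the general case open, citing Theorem \ref{Theorem: translated points filling.} and the result of Albers--Fuchs--Merry for $\rho(\al)=+\infty$ only as ``evidence.'' Your sketch does not close this gap. The step you yourself flag --- compactness of the Rabinowitz--Floer moduli spaces in the symplectization $SN$ when there is no filling capping off the negative end $r\to 0^+$ --- is precisely the missing ingredient, and waving at ``SFT neck-stretching'' or ``virtual fundamental cycles'' does not supply it: no construction of a Rabinowitz--Floer theory for $N_1\subset SN$ without a filling is carried out here or invoked from the literature, and building one is a substantial project, not a routine adaptation. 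So what you have written is a research programme, not a proof.

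Beyond that, two structural steps of your outline would fail even in the filled setting. First, the mechanism is not a ``continuation isomorphism'' identifying $\mathrm{RFH}$ with $H_*(N;\Z/2)$: the full Rabinowitz--Floer homology of a hypersurface can vanish (it does whenever the hypersurface is displaceable), so no such isomorphism can be the source of the lower bound. The actual Albers--Frauenfelder argument, which the paper's Theorem \ref{Theorem: translated points filling.} invokes, is an action-window argument: critical values of the unperturbed functional are sums of Reeb periods, perturbing by a Hamiltonian of Hofer norm $<\rho(\al)$ moves critical values by less than $\rho(\al)$, and hence critical points cannot escape the window $(-\rho(\al),\rho(\al))$; this is also exactly where the small-oscillation hypothesis (via Proposition \ref{Proposition: Osc via L infty} and the replacement of $\psi$ by $\phi^{\eta_*}_R\psi$) enters. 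Second, the passage from algebraic to geometric translated points is not achieved by ``quotienting by the $\Z$-action of $\phi^t_R$'': two distinct algebraic translated points over the same geometric point have actions differing by at least $\rho(\al)$, so the energy bound forces at most one of them into the relevant action window --- that is the argument, and your version would not produce the stated count.
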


The methods of \cite{AlbersMerryFuchs} seem to give a version of this conjecture (cf. Albers-Hein \cite{AlbersHein}) which is weaker in terms of assumptions. Additionally, following \cite{AlbersHein}, one expects a cuplength estimate instead of the estimate $ \geq 1$ in the degenerate case, strengthening the conclusion.

We proceed by studying the relationship of the contact Hofer norm with a number of conjugation invariant norms on the group $\G$ and $\til{\G}.$ The norms that were constructed on the group $\G$ are the norm $\nu_S$ of Sandon \cite{MargheritaMetric} on $\G = \Cont_{0,c}(\R^{2n} \times S^1)$ and $\nu_Z$ of Zapolsky \cite{ZapolskyMetric} on $\G = \Cont_{0,c}(T^* B \times S^1)$ for a closed connected manifold $B$ of positive dimension (remark, we take $\nu_Z = \rho_{\sup}$ in the notations of \cite{ZapolskyMetric}). The norms that were constructed on the group $\til{\G}$ are $\nu_{CS}$ by Colin-Sandon \cite{ColinSandonMetric} for any contact manifold and $\nu_{FPR}$ by Fraser-Polterovich-Rosen \cite{FPR} for orderable contact manifolds whose Reeb flow generates a circle action (note: we only consider the subgroup $\til{\G}=\til{\G_c}$ of $\til{\G}_e$ in the notation of \cite{FPR}). Moreover, the quasimorphisms $\mu_G$ of Givental \cite{GiventalQuasimorphism} on $\til{\G}$ for $\G=\Cont_0(\R P^{2n-1},\xi_{std})$ and $\mu_{BZ}$ of Borman-Zapolsky \cite{BormanZapolsky} on $\til{\G}$ for certain toric contact manifolds give norms $\nu_{G},\nu_{BZ}$ on $\til{\G}$ defined by $\nu_G = |\mu_G| + D(\mu_G)$ and $\nu_{BZ} = |\mu_{BZ}| + D(\mu_{BZ})$ on $\til{\G}\setminus \til{1},$ where $D(\mu) \geq 0$ denotes the defect of the quasimorphism $\mu,$ and $\nu_{G}(\til{1})=0, \nu_{BZ}(\til{1})=0.$ It turns out that $|\cdot|_\al$ provides a sort of universal upper bound for most of these metrics, and is therefore unbounded whenever these metrics are unbounded. We list these estimates below, the proofs of which are either straightforward computations or are contained in existing work and hence are mostly omitted. However we note the following bound which facilitates these computations.

\medskip

\begin{lma}\label{Lemma: ceiling of the norm}
For $\phi \in \G$ and $\til{\phi} \in \til{\G}$ the numbers  $\lceil|\phi|_\al\rceil$ and $\lceil|\til{\phi}|_\al\rceil$ are bounded from below by \[\min(\max\{|\lceil \int_0^1 \max_N H_t \,dt \rceil|,|\lfloor \int_0^1 \min_N H_t \,dt\rfloor|\}) - \epsilon\] where the minimum is taken over all contact Hamiltonians $H$ whose flow $\{\phi^t_H\}_{t \in [0,1]}$ satisfies $\phi = \phi^1_H$ in the first case, and $[\{\phi^t_H\}] = \til{\phi} \in \til{\G}$ in the second case, and $\epsilon \in \{0,1\}$ is $1$ if and only if $|\phi|_\al$ (repsectively $|\til{\phi}|_\al$) is a non-negative integer, and the infimum in the definition of the contact Hofer norm is not attained. 
\end{lma}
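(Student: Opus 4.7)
The plan is to work directly from the definition of $|\phi|_\al$ as an infimum of $\int_0^1 |H_t|_{L^\infty(N)}\,dt$ over contact Hamiltonians whose time-$1$ map (resp.\ path class) is $\phi$ (resp.\ $\til{\phi}$), and to exploit two layers of elementary estimates. The core observation is the pointwise identity $|H_t|_{L^\infty(N)} = \max\bigl(|\max_N H_t|,\,|\min_N H_t|\bigr)$; integrating over $t \in [0,1]$ and applying the triangle inequality for integrals gives, for every admissible Hamiltonian $H$,
\[
\int_0^1 |H_t|_{L^\infty(N)}\, dt \ \geq\ \max\!\left(\left|\int_0^1 \max_N H_t\, dt\right|,\ \left|\int_0^1 \min_N H_t\, dt\right|\right).
\]

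Write $a_H := \int_0^1 \max_N H_t\, dt$ and $b_H := \int_0^1 \min_N H_t\, dt$, and $I_H := \int_0^1 |H_t|_{L^\infty(N)}\,dt$. The second ingredient is the arithmetic fact $\lceil \max(|a|,|b|)\rceil \geq \max(|\lceil a\rceil|,\,|\lfloor b\rfloor|)$ for any reals $a,b$, which I would verify by a short case analysis on the signs of $a$ and $b$, using the identity $\lceil -x\rceil = -\lfloor x\rfloor$. Combined with the integral estimate, this yields the per-Hamiltonian bound $\lceil I_H \rceil \geq m(H) := \max(|\lceil a_H\rceil|,|\lfloor b_H\rfloor|)$ for every admissible $H$.

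The last step is to convert this into a lower bound on $\lceil |\phi|_\al\rceil$ by passing to the infimum, and here the parameter $\epsilon$ enters via a three-way case split. If the infimum in Definition~\ref{Definition: Hofer energy} is attained at some $H^*$, then $\lceil |\phi|_\al\rceil = \lceil I_{H^*}\rceil \geq m(H^*) \geq \min_H m(H)$, and the claim holds with $\epsilon=0$. If the infimum is not attained and $|\phi|_\al$ is not a non-negative integer, then $|\phi|_\al < \lceil |\phi|_\al\rceil$ strictly, so I may pick an admissible $H$ with $I_H < \lceil |\phi|_\al\rceil$; then $\lceil I_H\rceil \leq \lceil |\phi|_\al\rceil$ and the bound again holds with $\epsilon=0$. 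Finally, if $|\phi|_\al$ is a non-negative integer and the infimum is not attained, the best one can guarantee is $I_H \in (|\phi|_\al,\,|\phi|_\al+1)$, whence $\lceil I_H\rceil = \lceil |\phi|_\al\rceil + 1$, which forces exactly the allowed loss $\epsilon=1$. The argument for $\til{\phi}\in\til{\G}$ is verbatim the same, replacing isotopies ending at $\phi$ by isotopies in the given homotopy class.

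The only genuinely delicate point is the edge case $\epsilon=1$, and I expect the main obstacle to be just the bookkeeping there: ensuring that the unit loss is recorded exactly on the locus where $|\phi|_\al$ sits on an integer without being realized, and nowhere else. Everything else reduces to the triangle inequality for integrals together with elementary ceiling/floor arithmetic.
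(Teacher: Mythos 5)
Your proof is correct and follows essentially the same route as the paper, which deduces the lemma from the per-Hamiltonian inequality $\lceil \int_0^1 |H_t|_{L^\infty(N)}\,dt\rceil \geq \max\{|\lceil \int_0^1 \max_N H_t\,dt\rceil|,|\lfloor \int_0^1 \min_N H_t\,dt\rfloor|\}$ (via sign case analysis) together with the semi-continuity of the ceiling function when passing to the infimum. Your three-way case split is exactly the bookkeeping the paper compresses into the phrase ``semi-continuity properties of $\lceil\cdot\rceil$,'' and it correctly isolates the unit loss $\epsilon=1$ to the case where $|\phi|_\al$ is an unattained non-negative integer.
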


\medskip
Lemma \ref{Lemma: ceiling of the norm} is a consequence of the inequality \[ \lceil \int_0^1 |H_t|_{L^\infty(N)}\,dt \rceil \geq  \max\{|\lceil \int_0^1 \max_N H_t \,dt \rceil|,|\lfloor \int_0^1 \min_N H_t \,dt\rfloor|\}\] which follows by straightforward case-by-case analysis, and the semi-continuity properties of the ceiling function $\lceil \cdot \rceil.$

\bs
\begin{prop}\label{Proposition: upper bound}
For all $\phi \in \G, \til{\phi} \in \til{\G},$ we have the estimates 
\begin{enumerate}[label=(\roman{*}), ref=(\roman{*})]
\item \label{item: upper bound 1} $\nu_S(\phi) \leq 2\lceil|\phi|_{\la_{std}}\rceil,\; $ where $\la_{std}$ is the standard contact form on $\R^{2n}\times S^1,$
\item \label{item: upper bound 2} $\nu_Z(\phi) \leq \lceil |\phi|_{\la_{std}} \rceil,\;$ where $\la_{std}$ is the standard contact form on $T^* B\times S^1,$
\item \label{item: upper bound 3} $\nu_{FPR}(\til{\phi}) \leq \lceil |\til{\phi}|_{\la} \rceil + 1,\;$ where $\la$ is the contact form in the notation of \cite{FPR},
\item \label{item: upper bound 4} $\nu_G(\til{\phi}) \leq \mathrm{const} \cdot (\lceil |\til{\phi}|_{\al_{std}}\rceil +1) + D(\mu_G),\;$ where $\mathrm{const} = \mu_G([\{\phi^t_R\}_{t \in [0,1]}]),$
\item \label{item: upper bound 5} $\nu_{BZ}(\til{\phi}) \leq \mathrm{const}\cdot (\lceil|\til{\phi}|_{\al_{std}}\rceil+1) + D(\mu_{BZ}),\;$ where $\mathrm{const} = \mu_{BZ}([\{\phi^t_R\}_{t \in [0,1]}]).$
\end{enumerate}

Hence $|\cdot|_\al$ is unbounded whenever any one of the norms $\nu_S, \nu_Z, \nu_{FPR}, \nu_{G}, \nu_{BZ}$ is unbounded.
\end{prop}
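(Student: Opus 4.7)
\begin{pfs}
The plan is to reduce all five bounds to Lemma \ref{Lemma: ceiling of the norm}. That lemma lets us replace $\lceil|\phi|_\al\rceil$ (respectively $\lceil|\til{\phi}|_\al\rceil$) by a quantity of the form
\[ \max\{|\lceil a(H) \rceil|,\; |\lfloor b(H) \rfloor|\},\quad a(H):=\int_0^1 \max_N H_t\,dt,\quad b(H):=\int_0^1 \min_N H_t\,dt, \]
for a generating contact Hamiltonian $H$ whose $L^\infty$ energy is arbitrarily close to the Hofer energy of $\phi$ (resp.\ $\til{\phi}$). Hence for each of the norms $\nu_S,\nu_Z,\nu_{FPR},\nu_G,\nu_{BZ}$ it suffices to produce, for such an $H$, an inequality of the form $\nu(\phi) \leq C\cdot \max\{|\lceil a(H)\rceil|,|\lfloor b(H)\rfloor|\} + C'$ with the constants matching the statement.

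For items \ref{item: upper bound 1}, \ref{item: upper bound 2}, \ref{item: upper bound 3}, I would argue case by case, directly from the constructions in Sandon \cite{MargheritaMetric}, Zapolsky \cite{ZapolskyMetric}, Colin--Sandon \cite{ColinSandonMetric} and Fraser--Polterovich--Rosen \cite{FPR}. In each of these papers, the value of $\nu$ on a contactomorphism generated by a Hamiltonian $H$ is controlled by the integer Reeb shift needed to ``dominate'' $H$ from above (and symmetrically from below). Concretely, one decomposes $\phi^1_H$ as a product of a Reeb power $\phi^s_R$ for an integer $s$ close to $\tfrac12(a(H)+b(H))$ and a residual piece whose generating Hamiltonian $H_t-s$ has integral oscillation bounded by $1$; the cocycle/triangle-type axiom of $\nu$ then yields the stated bound. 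The factor $2$ in \ref{item: upper bound 1} reflects the two-sided accounting (discriminant and translated points) in Sandon's definition, while the additive slack $+1$ in \ref{item: upper bound 3} comes from the orderability estimate.

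For items \ref{item: upper bound 4} and \ref{item: upper bound 5}, I would use the quasimorphism defect inequality
\[ |\mu(\til{\phi}) - s\cdot \mu([\{\phi^t_R\}_{t\in[0,1]}])| \leq |\mu(\til{\phi}\cdot \til{\phi}_R^{-s})| + D(\mu), \]
valid for any integer $s$, where homogeneity of $\mu$ gives $\mu(\til{\phi}_R^s)=s\cdot \mu([\{\phi^t_R\}_{t\in[0,1]}])$. Choosing $s$ as the nearest integer to $\int_0^1 H_t\,dt$ for an almost-minimizing Hamiltonian $H$, the residual class $\til{\phi}\cdot\til{\phi}_R^{-s}$ is represented by a Hamiltonian whose $L^\infty$ integral is at most $1$. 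On such classes both $\mu_G$ and $\mu_{BZ}$ are bounded by $D(\mu)$ up to a universal constant, by the standard commutator/fragmentation argument that already appears in \cite{GiventalQuasimorphism, BormanZapolsky}. Assembling these pieces and re-invoking Lemma \ref{Lemma: ceiling of the norm} delivers the claimed bounds.

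The main technical obstacle is the last step just mentioned, namely the uniform boundedness of $\mu_G$ and $\mu_{BZ}$ on contactomorphism classes whose generating Hamiltonian has $L^\infty$ integral at most $1$. This amounts to a Lipschitz-type estimate for these quasimorphisms with respect to $|\cdot|_\al$ on a displaceable Darboux neighborhood, and is the reason the defect $D(\mu)$ features in the final constant.
\end{pfs}
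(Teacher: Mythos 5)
Your overall strategy --- reduce everything to Lemma \ref{Lemma: ceiling of the norm} and then exploit the structure of each norm --- is the same as the paper's, and for items \ref{item: upper bound 1}--\ref{item: upper bound 3} the paper does essentially what you sketch: it cites the explicit identities $\nu_+(\til{\phi}) = \min\lceil\int_0^1\max H_t\,dt\rceil$, $\nu_-(\til{\phi}) = \max\lfloor\int_0^1\min H_t\,dt\rfloor$, $\nu_{FPR}=\max\{|\nu_+|,|\nu_-|\}$ from \cite{FPR} and the upper bound on $\rho_{\sup}$ from \cite{ZapolskyMetric}, plus case-by-case bookkeeping with the ceiling and floor. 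However, one of your intermediate mechanisms is wrong as stated: subtracting an integer Reeb power $s$ from a generating Hamiltonian does not produce a residual with ``integral oscillation bounded by $1$,'' since $\osc_N(H_t-s)=\osc_N(H_t)$ identically, and the $L^\infty$ integral of $H_t-s$ is at best about $\tfrac12\int_0^1\osc_N(H_t)\,dt$, which is comparable to $|\til{\phi}|_\al$ itself rather than to $1$. So the proposed decomposition does not localize the difficulty into a small residual piece.

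More seriously, the step you yourself flag as the main obstacle for items \ref{item: upper bound 4} and \ref{item: upper bound 5} --- uniform boundedness of $\mu_G$ and $\mu_{BZ}$, up to $D(\mu)$, on classes whose generating Hamiltonian has small $L^\infty$ integral --- is a genuine gap and is not supplied by any ``standard commutator/fragmentation argument.'' Fragmentation-type vanishing of homogeneous quasimorphisms applies to elements supported in a displaceable set; small contact Hofer energy does not imply displaceable support, so the Lipschitz-type estimate for $\mu_G,\mu_{BZ}$ with respect to $|\cdot|_\al$ that your argument needs is exactly what remains to be proved. The paper sidesteps this entirely: items \ref{item: upper bound 4} and \ref{item: upper bound 5} are deduced from item \ref{item: upper bound 3} combined with \cite[Lemma 1.33]{BormanZapolsky}, which is precisely the Lipschitz estimate of these quasimorphisms with respect to the integer-valued norm $\nu_{FPR}$ (with the constant $\mu([\{\phi^t_R\}_{t\in[0,1]}])$), established there by the specific Floer-theoretic construction of $\mu_G$ and $\mu_{BZ}$. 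You need to either cite that lemma or reprove its content; the defect inequality and homogeneity alone do not close the argument.
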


\medskip
Item \ref{item: upper bound 2} is simply the upper bound on $\rho_{\sup}(\phi)$ from \cite[Proof of Theorem 1.3]{ZapolskyMetric}, and item \ref{item: upper bound 1} follows by tracing the identifications of contact manifolds in \cite{MargheritaMetric} from the same upper bound and a comparison between $\rho_{\osc}$ and $\rho_{\sup}$ in the notations of \cite{ZapolskyMetric}. Item \ref{item: upper bound 3} follows by the identities \[\nu_+(\til{\phi}) = \min_{[\{\phi^t_H\}]=\til{\phi}} \,\lceil \int_0^1 \max H_t \,dt \rceil,\; \nu_-(\til{\phi}) = \max_{[\{\phi^t_H\}]=\til{\phi}} \,\lfloor \int_0^1 \min H_t \,dt \rfloor,\]\[\nu_{FPR} = \max \{|\nu_+|,|\nu_-|\},\] in the notations of \cite{FPR} and Lemma \ref{Lemma: ceiling of the norm} by straightforward case-by-case analysis. Items \ref{item: upper bound 4} and \ref{item: upper bound 5} follow from Item \ref{item: upper bound 3} and \cite[Lemma 1.33]{BormanZapolsky}, the coefficient $\mathrm{const}$ being the one that appears in \cite[Lemma 1.33]{BormanZapolsky}.

Proposition \ref{Proposition: upper bound} has the following consequence which is similar in spirit to \cite[Proposition 3.11]{FPR}. We call an open subset $U \subset N$ of $N$ {\em downscalable} if for every compact subset $A \subset U$ and for every $K > 0$ there exists $\psi=\psi_K \in \Cont(N),$ such that $\la_{\psi} = e^{g_{\psi}}$ satisfies $g_{\psi}|_A \leq -K.$ Note that the property of $U$ being downscalable does not depend on the choice of the contact form $\al$ on $N.$
A simple example of a downscalable $U$ is the complement of a point in $S^1$ with the standard contact structure. A more general example is any contact Darboux ball, that is - the image of a contact embedding of a standard contact ball \begin{equation}\label{Equation: contact ball} B_{st} = \bigg\{\sum_j (x_j^2+y_j^2) + {z^2} < 1\bigg\}\end{equation} in the standard contact $\R^{2n} \times \R$ with coordinates $(x_1,y_1,\ldots,x_n,y_n,z)$.


\medskip

\begin{cor}\label{Corollary: downscalable}
Any element $\til{\phi} \in \til{\G}$ that lies in the image of the natural map $\til{\Cont}_{0,c}(U) \to \til{\G}$ for a downscalable subset $U$ has \[\inf_{\psi \in \G} |\psi \til{\phi} \psi^{-1}|_{\al} = 0.\] Hence $\nu_{FPR}(\til{\phi}) \leq 2,$ and similar statements hold for the other norms $\nu_S,\nu_Z, \nu_G, \nu_{BZ}.$
\end{cor}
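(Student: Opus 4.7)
The plan is to combine the naturality property (iv) from Theorem \ref{Theorem: cocycle norm} (extended to $\til{\G}$ by the remark following it) with the downscalability hypothesis, which will let us shrink the contact Hamiltonian pointwise by conjugation. The conclusions for $\nu_{FPR}, \nu_S, \nu_Z, \nu_G, \nu_{BZ}$ will then follow from Proposition \ref{Proposition: upper bound} together with conjugation-invariance of these norms.

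More concretely, I first choose a representative path $\{\phi_t\}_{t \in [0,1]}$ of $\til{\phi}$ lying entirely inside $\Cont_{0,c}(U)$. By compactness of $[0,1]$ and continuity of the supports, there exists a compact subset $A \subset U$ containing $\supp(\phi_t)$ for all $t$; the corresponding contact Hamiltonian $H_t$ (with respect to $\al$) then satisfies $\supp(H_t) \subset A$ for all $t$, and we set $C := \int_0^1 |H_t|_{L^\infty(N)}\,dt < \infty$. Now for each $K > 0$, downscalability furnishes $\psi_K$ with $\la_{\psi_K}|_A = e^{g_{\psi_K}}|_A \leq e^{-K}$. By Property (iv) we have
\[
|\psi_K \til{\phi} \psi_K^{-1}|_\al = |\til{\phi}|_{\psi_K^* \al}.
\]
The contact Hamiltonian generating the same isotopy with respect to $\psi_K^* \al = \la_{\psi_K} \cdot \al$ equals $\la_{\psi_K} \cdot H_t$, and because $H_t$ is supported in $A$ where $\la_{\psi_K} \leq e^{-K}$, we obtain
\[
|\la_{\psi_K} \cdot H_t|_{L^\infty(N)} \leq e^{-K} |H_t|_{L^\infty(N)}.
\]
Integrating gives $|\psi_K \til{\phi} \psi_K^{-1}|_\al \leq e^{-K} C$, which tends to $0$ as $K \to \infty$, establishing $\inf_{\psi \in \G} |\psi \til{\phi} \psi^{-1}|_\al = 0$.

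For the $\nu_{FPR}$ bound, I invoke conjugation-invariance: $\nu_{FPR}(\til{\phi}) = \nu_{FPR}(\psi_K \til{\phi} \psi_K^{-1})$ for every $K$. Choosing $K$ large enough that $|\psi_K \til{\phi} \psi_K^{-1}|_{\la} < 1$ and applying Proposition \ref{Proposition: upper bound}\ref{item: upper bound 3} yields
\[
\nu_{FPR}(\til{\phi}) \leq \lceil |\psi_K \til{\phi} \psi_K^{-1}|_\la \rceil + 1 \leq 1 + 1 = 2.
\]
The analogous arguments for $\nu_S$, $\nu_Z$, $\nu_G$, $\nu_{BZ}$ are immediate from the corresponding items of Proposition \ref{Proposition: upper bound}, noting that for $\nu_G$ and $\nu_{BZ}$ the bound also involves the constants $\mu_G([\{\phi^t_R\}])$, $\mu_{BZ}([\{\phi^t_R\}])$ and the defects $D(\mu_G)$, $D(\mu_{BZ})$.

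The only real subtlety I anticipate is verifying that the representative $\psi_K$ produced by downscalability can be taken in $\G = \Cont_0(N,\xi)$ rather than just in $\Cont(N,\xi)$, so that it is admissible in the infimum of the corollary; this is unproblematic in the motivating examples (e.g. Darboux balls, where $\psi_K$ can be built as the time-one map of a compactly supported contact isotopy rescaling a Darboux chart), and more generally the downscalability hypothesis should be read as providing such a $\psi_K$ inside $\G$. Apart from this bookkeeping point, the proof is a short direct computation based on naturality.
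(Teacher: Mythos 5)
Your argument is correct and is essentially the same as the paper's: both use naturality (Property \ref{item: metric Property 4}) to rewrite $|\psi_K \til{\phi} \psi_K^{-1}|_\al$ as $|\til{\phi}|_{\psi_K^*\al}$, observe that the contact Hamiltonian with respect to $\psi_K^*\al = \la_{\psi_K}\al$ is $\la_{\psi_K} H_t$, which is bounded by $e^{-K}|H_t|_{L^\infty}$ on the compact support $A$, and then deduce the norm bounds from Proposition \ref{Proposition: upper bound} together with conjugation-invariance. Your closing remark about whether $\psi_K$ lies in $\G$ is a fair bookkeeping point that the paper glosses over, but it does not change the argument.
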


\begin{proof}
Let $Y(t,x)$ be a contact vector field generating $\til{\phi}$ such that $Y(t,x) = 0$ outside a compact $A \subset U.$ Choose $K>0,$ and let $\psi$ be such that $\la_\psi \leq e^{-K}$ on $A.$ For a choice of global contact form $\al,$
set $l(Y,\al) = \int_0^1 |H_t|_{L^\infty(N)} \,dt,$ where $H=\al(Y)$ is the contact Hamiltonian of the vector field $Y.$ Note that by Property \ref{item: metric Property 4} of the contact Hofer metric, $|\psi \til{\phi} \psi^{-1}|_{\al} = |\til{\phi}|_{\psi^* \al}.$ Moreover, since $\psi^*\al = \la_\psi \al,$ \[l(Y,\psi^*\al) = \int_0^1 |\la_\psi H_t|_{L^\infty(N)} \,dt, =  \int_0^1 |\la_\psi H_t|_{L^\infty(U)} \,dt \leq e^{-K} l(Y,\al).\] Taking infima now shows that \[\inf_{\psi \in \G} |\til{\phi}|_{\psi^* \al} \leq e^{-K} l(Y,\al)\] for all $K>0.$ This finishes the proof of the first statement. The second statment is now an immediate corollary of Proposition \ref{Proposition: upper bound} and the conjugation-invariance of the norm $\nu_{FPR}.$
\end{proof}


\begin{rmk}
Recall that the the Lie algebra of $\G$ is identified with $\sm{N,\R}$ by a choice of a contact form $\al,$ and under this identification, the adjoint action of $\G$ on its Lie algebra takes the form $\mathrm{Ad}_\psi(H) = (\la_\psi \, H) \circ \psi^{-1}.$ It is curious to note that by the same argument as in the proof of Corollary \ref{Corollary: downscalable}, given any norm $|\cdot|_0$ that is invariant under pull-back by elements of $\G$ on the linear space $\sm{N,\R}$ (for example the $L^\infty$-norm), its $\mathrm{Ad}$-invariantization (cf. \cite{YaronLev}) 
\[|H|_0^{Ad}= \inf_{\substack{H_1 + \ldots + H_l = H,\\ \psi_1,\ldots, \psi_l \in \G}} |\mathrm{Ad}_{\psi_1} (H_1)|_0 + \ldots + |\mathrm{Ad}_{\psi_l} (H_l)|_0,\] yields an ($\mathrm{Ad}$-invariant!) pseudo-norm that is identically zero. This confirms the intuition that conjugation-invariant norms on $\G$ are objects of global topological nature. 
\end{rmk}

Considering conjugacy classes of elements in $\G$ leads one to the following question. 

\medskip
\begin{qtn}\label{Question: sup over conjugacy classes}
For which contact manifolds $(N,\xi)$ with contact form $\al$ the value \[|\phi|_{conj,\al} = \sup_{\psi \in \G} |\psi \phi \psi^{-1}|_{\al}\] is finite for all $\phi \in \G$?
\end{qtn}

Note that whenever this finiteness condition holds, $|\cdot|_{conj,\al}$ gives a conjugation-invariant non-degenerate norm on $\G.$ Clearly there is an analogous definition for $\til{\G}.$ An example of $\phi$ with $|\phi|_{conj,\al} = + \infty$ would have to involve a sequence $\{\psi_j\}$ of contactomorphisms such that $\max \la_{\psi_j} \to \infty$ and $|\psi_j|_\al \to \infty$ as $j \to \infty,$ and hence a partial answer to Question \ref{Question: unbounded}. In particular whenever $|\cdot|_\al$ is bounded, we obtain a bounded conjugation-invariant norm on $\G.$ Moreover, according the theorem of \cite{FPR}, there must exist a constant $c>0,$ such that $|\phi|_{conj,\al} \geq c$ for all $\phi \neq 1.$ The following is a numerical bound on $c>0$.

\medskip
\begin{prop}
For all $\phi \neq 1,$ \[|\phi|_{conj,\al} \geq \frac{1}{4} c_0,\]  \[c_0 = \sup_B \til{c}_\al(B) > 0,\] where $B$ runs over all contact embeddings of the standard contact ball $B_{st}$ into $N.$
\end{prop}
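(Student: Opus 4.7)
The plan is to combine the $\alpha$-dependent energy--capacity inequality of Corollary \ref{Corollary: contact energy-capacity} with standard contact flexibility: any non-identity $\phi \in \G$ admits conjugates that displace every contact Darboux ball in $N$. Once this is shown, applying the energy--capacity inequality to the conjugate and passing to the supremum in the definition of $c_0$ yields the conclusion.

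Setup: since $\phi \neq 1$, pick $x_0 \in N$ with $\phi(x_0) \neq x_0$, and by continuity an open neighborhood $V$ of $x_0$ with $\phi(\overline{V}) \cap \overline{V} = \emptyset$. Fix any contact embedding $\iota:\overline{B_{st}} \hookrightarrow N$ with image $\overline{B}$. The key claim is that there exists $\psi \in \G$ with $\psi^{-1}(\overline{B}) \subset V$. Granting this, $\phi(\psi^{-1}(B)) \subset \phi(V)$ is disjoint from $\overline{\psi^{-1}(B)} = \psi^{-1}(\overline{B}) \subset \overline{V}$, so $\phi$ displaces $\psi^{-1}(B)$, whence $\psi\phi\psi^{-1}$ displaces $B$. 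Corollary \ref{Corollary: contact energy-capacity} then gives
\[
|\phi|_{conj,\al} \;\geq\; |\psi\phi\psi^{-1}|_\al \;\geq\; E_\al(B) \;\geq\; \tfrac{1}{4}\,\til{c}_\al(B),
\]
and taking the supremum over contact embeddings $B$ yields $|\phi|_{conj,\al} \geq \tfrac{1}{4}\,c_0$.

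To construct $\psi$ I would proceed in two steps. First, build an auxiliary contact embedding $\iota_\eps:\overline{B_{st}} \hookrightarrow V$ by taking a Darboux chart $\Psi$ at $x_0$ mapping a small open neighborhood $U$ of $x_0$ into $(\R^{2n+1},\al_{std})$ with $U \subset V$, and using the contact rescaling $(x,y,z) \mapsto (\eps x,\eps y,\eps^2 z)$: for $\eps > 0$ sufficiently small this rescaling sends $\overline{B_{st}}$ into $\Psi(U)$, and $\Psi^{-1}\circ(\text{rescaling})$ is the desired $\iota_\eps$. Second, appeal to the standard flexibility result that $\G$ acts transitively on contact embeddings of $\overline{B_{st}}$ in a connected contact manifold — a consequence of the contact Darboux theorem, connectedness of $N$, path-connectedness of linear contact transformations at a point, and the contact isotopy extension theorem (see e.g.\ Geiges) — to find $\psi_1 \in \G$ with $\psi_1\circ\iota = \iota_\eps$. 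Setting $\psi = \psi_1^{-1}$ gives $\psi^{-1}(\overline{B}) = \iota_\eps(\overline{B_{st}}) \subset V$, as required.

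The main technical point is this flexibility input, namely that any two contact embeddings of $\overline{B_{st}}$ into a connected contact manifold are related by an element of $\G$; but this is classical. Modulo this ingredient the argument is a short chain of implications from Corollary \ref{Corollary: contact energy-capacity} and the definition of $|\cdot|_{conj,\al}$ as a supremum over conjugates.
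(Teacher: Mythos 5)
Your argument is correct, and it rests on the same two ingredients as the paper's proof --- the energy--capacity inequality of Corollary \ref{Corollary: contact energy-capacity} and the conformal dilation $(x,y,z)\mapsto(e^{\tau}x,e^{\tau}y,e^{2\tau}z)$ of the standard contact ball --- but you run the flexibility in the opposite direction, which changes the structure of the proof. The paper fixes one small ball $B_0$ displaced by $\phi$, observes that $|\phi|_{conj,\al}\ge\tfrac14\sup_{\psi}\til{c}_\al(\psi(B_0))$, and then shows this orbit-capacity dominates $\til{c}_\al(B)$ for every contact ball $B=\iota(B_{st})$ by moving the center of $B_0$ onto $\iota(0)$, expanding a tiny ellipsoid $\iota(B(\eps))\subset\psi_1(B_0)$ to $\iota(B(\rho))$ with $\rho\to 1$, and invoking the $C^0$-continuity of $\til{c}_\al$ (Remark \ref{Remark: contact capacity C0 continuous}); only transitivity of $\G$ on \emph{points} is used. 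You instead arrange, for each target ball $B$, a conjugate of $\phi$ that displaces $B$ itself, which requires shrinking $\overline{B}$ into the displaced neighborhood $V$; this avoids the $C^0$-continuity of the capacity and the limit in $\rho$, at the price of transitivity of $\G$ on embeddings of the closed ball $\overline{B_{st}}$. That transitivity is true, but you should not present it as soft general position: it holds precisely because the dilation $\delta_{\la}(x,y,z)=(\la x,\la y,\la^2 z)$ is a contactomorphism of $(\R^{2n+1},\xi_{std})$, so $\iota\circ\delta_{e^{-s}}$ is again a contact embedding of $\overline{B_{st}}$ and the space of such embeddings retracts onto germs at a point (the analogous connectedness for symplectic embeddings of a fixed-size ball is a hard theorem, not a formality); concretely one cuts the contracting Hamiltonian off in a shell around $\overline{B_{st}}$, available because an embedding of the closed ball extends to a neighborhood, and the contraction preserves $\overline{B_{st}}$ so the cut-off does not disturb it. One small repair: $c_0$ is a supremum over embeddings of the \emph{open} ball $B_{st}$, which need not extend to $\overline{B_{st}}$; apply your argument to the embeddings $\iota\circ\delta_{\rho}$ of $\overline{B_{st}}$ for $\rho<1$ and use $\til{c}_\al(\iota(B_{st}))=\sup_{\rho<1}\til{c}_\al(\iota(B(\rho)))$ to conclude.
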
  

\begin{proof}
For a subset $A \subset N,$ denote $\overline{c}_\al(A) = \sup_{\psi \in \G} \til{c}_\al (\psi(A)).$ By the energy-capacity inequality (Corollary \ref{Corollary: contact energy-capacity}) we have \[|\phi|_{conj,\al} \geq \frac{1}{4} \overline{c}_\al(B_0)\] where $B_0$ is a small contact ball displaced by $\phi.$ We claim that \[\overline{c}_\al(B_0) \geq c_0.\]

Let $B$ be the image of a contact embedding $\iota: B_{st} \to N.$ Note that there exists a contactomorphism $\psi_1 \in \G$ mapping the center $p_0$ of $B_0$ to the center $p=\iota(0)$ of $B.$ Then the image of $B_0$ under $\psi_1$ contains the image $\iota(B(\eps))$ of a small ellipsoid $B(\eps) = \{\sum_j \frac{(x_j^2+y_j^2)}{\eps^2} + \frac{z^2}{\eps^4} < 1\}$ around $0.$ Hence \begin{equation}\label{Equation:cap}\overline{c}_\al(B_0) \geq \overline{c}_\al(\iota(B(\eps))).\end{equation} Moreover, given a number $0 <\rho < 1,$ considering the autonomous contact flow $(x,y,z) \mapsto (e^\tau \cdot x, e^\tau \cdot y,e^{2\tau} \cdot z)$ on $\R^{2n} \times \R,$ with contact Hamiltonian cut off sufficiently close to $\partial B_{st},$ we construct a contactomorphism $\psi_{2,\eps,\rho,st}$ supported in $B_{st}$ and mapping $B(\eps)$ to $B(\rho).$ Clearly this contactomorphism extends to a contactomorphism $\psi_{2,\eps,\rho}$ of $N.$ This, and the $C^0$-continuity of the contact capacity $\til{c}_\al$ (Remark \ref{Remark: contact capacity C0 continuous}) gives the equality \[\overline{c}_\al(\iota(B(\eps))) = \overline{c}_\al (B),\] which combined with (\ref{Equation:cap}) finishes the proof.
\end{proof}

\medskip

We conclude this section by studying the relation of the contact Hofer norm in the case of prequantization spaces and the usual Hofer norm on the group of Hamiltonian diffeomorphisms of a symplectic manifold. Given a symplectic manifold $(M,\om),$ the group $\Ham(M)$ of Hamiltonian diffeomorphisms of $(M,\om)$ consists of the time-$1$-maps $\phi^1_H$ of time-dependent Hamiltonian vector fields $X_H$ obtained by the Hamiltonian construction $\iota_{X_H} \om = - dH$ from compactly supported Hamiltonian functions $H \in C^\infty_c([0,1] \times M,\R).$ In the case when $M$ is closed it is enough to consider only $H \in C^\infty_0([0,1] \times M,\R)$ normalized by the condition that $\int_M H(t,-) \om^n = 0$ for all $t \in [0,1].$

The Hofer norm on $\Ham(M)$ was introduced by Hofer \cite{HoferMetric} and shown to be always non-degenerate by Lalonde-McDuff \cite{LalondeMcDuffEnergy}. It is defined as \[|\phi|_{Hofer} = \inf \int_0^1 (\max_M H(t,\cdot) - \min_M H(t,\cdot)) \,dt,\] where the infimum runs over all $H \in C^\infty_0([0,1] \times M,\R)$ with $\phi^1_H = \phi.$ Alternatively, one can take the infimum \[|\phi|'_{Hofer}=\inf \int_0^1  |H(t,\cdot)|_{L^\infty(M)} \,dt\] over the same set of $H,$ which results in an equivalent norm: \[\frac{1}{2} |\cdot|_{Hofer} \leq |\cdot|'_{Hofer} \leq |\cdot|_{Hofer}.\]

Let $(P,\al)$ be a prequantization space of the symplectic manifold $(M,\om).$ That is $d\al = \til{\om},$ for the lift $\til{\om} = p^* \om$ of $\om$ to $P$ by the fibration map $p: P \to M.$ Such a prequantization exists if and only if the class $[\om] \in H^2(M,\R)$ is integral, that is lies in $\text{Image}(H^2(M,\Z) \to H^2(M,\R)).$ Then the group $\cH=\cH_\al \subset \G$ of strict contactomorphisms (called quantomorphisms in this setting) and the group $\Ham=\Ham(M)$ enter the following central group extensions:
\begin{gather}\label{Equation: Quant Ham long exact}
1 \to S^1 \to \cH \xrightarrow{pr} \Ham \to 1,\\
0 \to \R \to \til{\cH} \xrightarrow{\til{pr}} \til{\Ham} \to 0.
\end{gather}

\medskip

Moreover the pullback map $p^*: C^\infty_0(M,\R) \to \sm{P,\R},$ $H \mapsto H\circ p$ on functions induces a natural homomorphism \[i:\til{\Ham} \to \til{\cH}\] which is a section for $\til{pr}.$ Moreover $cw \circ i = 0$ (see Remark \ref{Remark: Calabi-Weinstein}).

This allows us to define the following $8$ conjugation-invariant norms on $\Ham$ (there are analogues for $\til{\Ham},$ which we omit) from the (pseudo-)norms $|\cdot|_\al,|\cdot|^{\osc}_\al,|\cdot|_{str,\al},|\cdot|^{\osc}_{str,\al},$ where the last one is the Finsler restriction from $\G$ to $\cH$ of the Finsler pseudo-metric giving $|\cdot|^{\osc}_\al$ on $\G:$

\begin{gather}\label{Equation: restriction of Hofer}
|\phi|_1 = \inf_{\pi(\til{\phi}) = \phi} |i(\til{\phi})|_{str,\al},\;\; |\phi|_2 =  \inf_{\pi(\til{\phi}) = \phi} |i(\til{\phi})|_{\al},\\
|\phi|_3 = \inf_{\pi(\til{\phi}) = \phi} |i(\til{\phi})|^{\osc}_{str,\al},\;\; |\phi|_4 = \inf_{\pi(\til{\phi}) = \phi} |i(\til{\phi})|^{\osc}_{\al},\\
|\phi|_5 = \inf_{pr(\wh{\phi}) = \phi} |\wh{\phi}|_{str,\al},\;\; |\phi|_6 = \inf_{pr(\wh{\phi}) = \phi} |\wh{\phi}|_{\al},\\
|\phi|_7 = \inf_{pr(\wh{\phi}) = \phi} |\wh{\phi}|^{\osc}_{str,\al},\;\; |\phi|_8 = \inf_{pr(\wh{\phi}) = \phi} |\wh{\phi}|^{\osc}_{\al}.
\end{gather}

By Proposition \ref{Proposition: energy-capacity} and Remark \ref{Remark: Reeb closed} it is seen that these norms are non-degenerate. Moreover, by arguments resembling the proof of Proposition \ref{Proposition: Osc via L infty}, which we omit, one sees that \[|\cdot|_3 = |\cdot|_5=|\cdot|_7 = |\cdot|_{Hofer},\] that \[\frac{1}{2}|\cdot|_{Hofer} \leq |\cdot|_1 \leq |\cdot|_{Hofer},\] that \[\nu_1(\cdot):=|\cdot|_4 = |\cdot|_6=|\cdot|_8 \leq |\cdot|_{Hofer}\] and that \[\nu_2(\cdot):=|\cdot|_2\] satisfies \[\nu_1(\cdot) \leq \nu_2(\cdot) \leq |\cdot|_1 \leq |\cdot|_{Hofer}.\] This leads one to the following questions.

\medskip
\begin{qtn}\label{Question: two new metrics and Hofer}
Is $\nu_1$ equivalent to the Hofer norm? Is $\nu_2$ equivalent to the Hofer norm?
\end{qtn}

\medskip
\begin{qtn}\label{Question: two new metrics and Hofer}
Is $\nu_1$ unbounded? Is $\nu_2$ unbounded?
\end{qtn}

In this direction, using Givental's quasimorphism \cite{GiventalQuasimorphism}, we prove the following statement. 

\medskip
\begin{prop}\label{Proposition: nu2 is unbounded on RP 2n+1}
If $M = \C P^n$ with the symplectic form $2\,\om_{FS},$ where $\om_{FS}$ is the Fubini-Study form normalized so that $\langle [\om_{FS}], [\C P^1]\rangle = 1,$ and $P = \R P^{2n+1}$ with the standard contact form $\al_{st}$ is its prequantization, then $\nu_2$ on $\Ham(\C P^n)$ is unbounded.
\end{prop}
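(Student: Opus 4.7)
The strategy is to pull back Givental's homogeneous quasimorphism $\mu_G$ on $\til{\G} = \til{\Cont}_0(\R P^{2n+1},\xi_{std})$ via the section $i:\til{\Ham}(\C P^n) \to \til{\cH}_{\al_{st}}$, producing a homogeneous quasimorphism $\til{\mu} := \mu_G \circ i$ on $\til{\Ham}(\C P^n)$, and to combine its unboundedness with the estimate of Proposition~\ref{Proposition: upper bound}~(iv) restricted to the image of $i$.

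Since the pullback $p^*: C^\infty_0(\C P^n,\R) \to C^\infty(\R P^{2n+1},\R)$ is a Lie algebra homomorphism, the section $i$ is a group homomorphism, and hence $\til{\mu}$ is a homogeneous quasimorphism on $\til{\Ham}(\C P^n)$. The kernel $\pi_1(\Ham(\C P^n)) \cong \Z/(n+1)$ of the projection $\til{\Ham} \to \Ham$ is finite: for any $\gamma$ in this kernel, $\gamma^{n+1}=1$ in $\til{\Ham}$ forces $i(\gamma)^{n+1}=1$ in $\til{\cH}$, and homogeneity of $\mu_G$ yields $\til{\mu}(\gamma) = 0$. The quasimorphism property then gives $|\til{\mu}(\til{\phi} \cdot \gamma) - \til{\mu}(\til{\phi})| \leq D(\mu_G)$ for every $\til{\phi} \in \til{\Ham}$, so that $\til{\mu}(\til{\phi})$ depends on $\pi(\til{\phi}) \in \Ham(\C P^n)$ only up to an additive error of $D(\mu_G)$.

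Applying item (iv) of Proposition~\ref{Proposition: upper bound} in the equivalent form $|\mu_G(\til{\psi})| \leq \mathrm{const} \cdot (\lceil|\til{\psi}|_{\al_{st}}\rceil + 1)$ to $\til{\psi} = i(\til{\phi})$, combined with the lift-independence above, we obtain
\[ |\til{\mu}(\til{\phi})| \leq \mathrm{const} \cdot (\nu_2(\phi) + 2) + D(\mu_G) \]
for every lift $\til{\phi}$ of every $\phi \in \Ham(\C P^n)$. It therefore suffices to exhibit a sequence $\phi_k \in \Ham(\C P^n)$ with lifts $\til{\phi}_k \in \til{\Ham}(\C P^n)$ for which $|\til{\mu}(\til{\phi}_k)| \to \infty$.

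This last step is the main obstacle. The cleanest route is to identify $\til{\mu}$, up to a non-zero constant factor, with the Entov--Polterovich asymptotic spectral quasimorphism on $\til{\Ham}(\C P^n)$ associated with the fundamental class, which is known to be unbounded: for an autonomous $H \in C^\infty_0(\C P^n,\R)$ localized near a global minimum, the values on the time-$s$ maps $[\{\til{\phi}^{s}_H\}]$ grow linearly in $s$, so iterating produces the desired family. Alternatively, one computes $\til{\mu}$ on this family directly from Givental's construction of $\mu_G$ via the nonlinear Maslov index, using that $i(\til{\phi}^{s}_H) = \til{\phi}^{s}_{p^*H}$ and that the Maslov index of the induced loop of Lagrangian subspaces grows linearly in $s$ for suitable $H$. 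Combining either computation with the bound above yields $\nu_2(\phi_k) \to \infty$, as desired.
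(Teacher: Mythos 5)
Your overall strategy coincides with the paper's: pull back Givental's quasimorphism $\mu_G$ along the section $i:\til{\Ham}(\C P^n)\to\til{\cH}$, check that the resulting function is effectively lift-independent, and play its unboundedness against item \ref{item: upper bound 4} of Proposition \ref{Proposition: upper bound}. There are, however, two genuine gaps. The first concerns lift-independence: you assert that $\pi_1(\Ham(\C P^n))\cong\Z/(n+1)$ and that every $\gamma$ in the kernel of $\til{\Ham}\to\Ham$ satisfies $\gamma^{n+1}=1$ in $\til{\Ham}$. This is not known for general $n$ (it holds for $n=1,2$, where $\Ham(\C P^n)$ is homotopy equivalent to $PU(n+1)$, but the homotopy type of $\Ham(\C P^n)$ is open for $n\geq 3$), so your vanishing $\til{\mu}(\gamma)=0$ is unjustified. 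The paper avoids this: it uses only that $i(\gamma)^{n+1}=i(\gamma^{n+1})$ lies in $\pi_1(\cH)\subset\til{\cH}$ (from \cite[Section 2.2]{RemarksInvariantsLoops}), that $\mu_G$ restricted to $\pi_1(\cH)$ equals $2\,cw$ by \cite[Theorem 1]{RemarksInvariantsLoops}, and that $cw\circ i=0$; homogeneity then gives $\mu_G(i(\gamma))=\tfrac{1}{n+1}\mu_G(i(\gamma)^{n+1})=0$, and centrality of $i(\gamma)$ upgrades your ``up to $D(\mu_G)$'' statement to the exact equality $\mu_G(i(\til{\phi}\gamma))=\mu_G(i(\til{\phi}))$.

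The second, and more serious, gap is the one you yourself flag as ``the main obstacle'': the unboundedness of $|\mu_G\circ i|$ on $\til{\Ham}(\C P^n)$ is left unproved. Identifying $\mu_G\circ i$ with the Entov--Polterovich asymptotic spectral quasimorphism is itself a nontrivial theorem, not something one may assert, and the alternative direct computation via the nonlinear Maslov index is only sketched. The paper closes this step by citing Ben Simon's theorem \cite{GabiGiventalCalabi}: for any displaceable open ball $B\subset\C P^n$, the restriction of $\mu_G$ along $\til{\Ham}_c(B)\to\til{\Ham}(\C P^n)\xrightarrow{i}\til{\cH}\to\til{\G}$ equals a nonzero multiple of the Calabi homomorphism $Cal_B$, which is manifestly unbounded. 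Without this (or an equivalently concrete computed input), your argument does not close; with it, the remaining estimate $|\mu_G(i(\til{\phi}))|\leq c_1\,|i(\til{\phi})|_{\al}+c_2$ from Proposition \ref{Proposition: upper bound}~\ref{item: upper bound 4} and the infimum over lifts proceed exactly as you describe.
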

%
%
%
%
\section{Proofs}\label{Section: Proofs}

We open this section with the following general result from \cite[Proof of Theorem 1.3]{UsherObservations}, which we refer to as Usher's trick. We remark that while Usher's trick is originally stated for compactly supported Hamiltonians on general symplectic manifolds, it works equally well for degree $1$ $\R_{>0}$-homogeneous Hamiltonians on the symplectization $SN$ of $N$ and their $\R_{>0}$-equivariant flows. Put $\cH^1_{SN}$ for the space of degree $1$ $\R_{>0}$-homogeneous Hamiltonians in $\sm{[0,1] \times SN,\R}.$

\medskip

\begin{prop}\label{Proposition: Usher's trick}
Given any $H \in \cH^1_{SN},$ there exists $K \in \cH^1_{SN}$ with flow $\{\phi^t_K\}_{t \in [0,1]},$ such that
\begin{enumerate}[label={U\arabic*}]
\item \label{item: Usher property 1} $\{\phi_H^t\}$ and $\{\phi_K^t\}$ have the same endpoints (and are in fact homotopic through $\R_{>0}$-equivariant Hamiltonian flows with fixed endpoints), and
\item for all $t \in [0,1],\; x \in M,$ \label{item: Usher property 2}\[K(t,\phi_K^t(x)) = H(1-t,x).\]    
\end{enumerate}
\end{prop}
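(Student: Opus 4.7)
The plan is to construct $K$ by first writing down the flow $\{\phi_K^t\}$ explicitly and then recognizing its generating Hamiltonian via a pushforward computation. Set
\[ \phi_K^t := \phi_H^1 \circ (\phi_H^{1-t})^{-1}. \]
By direct inspection $\phi_K^0 = \mathrm{id}$ and $\phi_K^1 = \phi_H^1$, so the two paths share the same endpoints. Since a degree--$1$ $\R_{>0}$-homogeneous Hamiltonian on $SN$ generates an $\R_{>0}$-equivariant symplectic flow, all three factors in the formula for $\phi_K^t$ are $\R_{>0}$-equivariant, and hence so is $\phi_K^t$ itself.

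To identify $K$, I would compute the right-invariant generator $Z_t = \tfrac{d}{dt}\phi_K^t \circ (\phi_K^t)^{-1}$. Differentiating the defining identity $\phi_H^{1-t}((\phi_H^{1-t})^{-1}(x)) = x$ expresses $\tfrac{d}{dt}(\phi_H^{1-t})^{-1}$ in terms of $X_H$, and a short chain-rule computation then yields
\[ Z_t = (\phi_K^t)_* X_{H_{1-t}}. \]
Because $\phi_K^t$ is a symplectomorphism of $SN$ and the pushforward of a Hamiltonian vector field satisfies $\phi_* X_F = X_{F \circ \phi^{-1}}$, this gives the intrinsic identification $K_t = H_{1-t} \circ (\phi_K^t)^{-1}$, equivalent to
\[ K(t,\phi_K^t(x)) = H(1-t,x), \]
which is property \ref{item: Usher property 2}. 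Applying the dilation $m_r$ to $x$ and using the $\R_{>0}$-equivariance $\phi_K^t \circ m_r = m_r \circ \phi_K^t$, both sides of this identity scale by $r$, so $K$ is degree--$1$ $\R_{>0}$-homogeneous, i.e.\ $K \in \cH^1_{SN}$.

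For the homotopy statement in \ref{item: Usher property 1}, I would build a two-parameter family $\Gamma(s,t)$, $s,t\in[0,1]$, of Hamiltonian paths with $\Gamma(0,\cdot) = \{\phi_H^t\}$, $\Gamma(1,\cdot) = \{\phi_K^t\}$ and $\Gamma(s,0) = \mathrm{id}$, $\Gamma(s,1) = \phi_H^1$ for every $s$. This is carried out by a standard splitting-and-reparametrization argument: for each $s$ one breaks the interval $[0,1]$ at an intermediate time $\tau(s)$, runs $\phi_H$ on the first piece with a reparametrization, and runs the tail $\phi_H^1 \circ (\phi_H^{1-t'})^{-1}$ of the $K$-flow on the second piece with the matching reparametrization, smoothing at the break. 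Every member of the family is assembled by composition and inversion from the $\R_{>0}$-equivariant flow $\{\phi_H^t\}$ itself, so the homotopy automatically stays inside the class of $\R_{>0}$-equivariant Hamiltonian flows.

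The main obstacle I anticipate is keeping the bookkeeping clean in the second step: a non-autonomous flow, its inverse, a left translation by $\phi_H^1$, and further compositions are all in play, and each differential must be evaluated at the correct base point. The cleanest way to avoid sign or base-point errors is to bypass direct differential chasing and verify the target identity $K_t = H_{1-t} \circ (\phi_K^t)^{-1}$ intrinsically at the level of vector fields via the pushforward formula $\phi_* X_F = X_{F\circ\phi^{-1}}$. The non-compactness of $SN$ plays no role, since all constructions manifestly commute with the $\R_{>0}$-action and no cutoffs are required.
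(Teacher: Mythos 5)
Your proof is correct, and it is essentially the argument the paper relies on: the paper does not prove this proposition itself but cites Usher's trick from \cite{UsherObservations} and merely remarks that the compactly supported argument carries over verbatim to degree-$1$ homogeneous Hamiltonians on $SN$. Your explicit construction $\phi_K^t = \phi_H^1\circ(\phi_H^{1-t})^{-1}$, the identification $K_t = H_{1-t}\circ(\phi_K^t)^{-1}$ via $\phi_*X_F = X_{F\circ\phi^{-1}}$, and the check that $\R_{>0}$-equivariance of all factors forces $K$ to be degree-$1$ homogeneous supply exactly the details the paper leaves implicit, so nothing further is needed.
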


We require the following cut-off technique. For an interval $I = (a,b) \subset \R_{>0}$ and $\kappa > 0,$ denote $I^\kappa = (e^{-\kappa} a, e^{\kappa} b) \subset \R_{>0}.$ Note that $I^\kappa \supset I.$

\medskip

\begin{lma}\label{Lemma: cutoff}
Let $\{\overline{\psi}_t\}$ be a path of $\R_{>0}$-equivariant Hamiltonian diffeomorphisms of $SN.$ Let $\overline{H}$ be its degree $1$ homogeneous Hamiltonian. Given an interval $I = (a,b) \subset \R_{>0}$ and $\delta > 0,$ fix a cutoff function $\la_0^{I,\delta}: \R_{>0} \to [0,1]$ that satisfies $\la_0^{I,\delta} \equiv 1$ on $I^{\delta/2},$ and $\la_0^{I,\delta} \equiv 0$ on $\R_{>0} \setminus I^{\delta}.$ Denote by $\la^{I,\delta}:SN \cong N \times \R_{>0} \to [0,1]$ the lift $\la^{I,\de}(x)=\la_0^{I,\delta}(r_\al(x))$ of $\la_0^{I,\delta}$ to $SN$ by the projection $r_\al$ to the second coordinate. Then the new compactly supported Hamiltonian \[\overline{H}^{I,\delta}(t,x) = \overline{H}(t,x) \cdot \la^{I,\delta}((\overline{\psi}_t)^{-1} x)\] with flow $\{\overline{\psi}_t^{I,\delta}\}$ satisfies the following properties. 
\begin{enumerate}[label=C\arabic*]
\item \label{item: cutoff Property 1}  $ \overline{\psi}_t^{I,\delta}|_{I \cdot N} \equiv \overline{\psi}_t|_{I \cdot N_1},$
\item \label{item: cutoff Property 2} $ \int_0^1 |\overline{H}^{I,\delta}_t|_{L^\infty(SN)} dt \leq e^\delta (\sup I) \cdot \int_0^1 |\overline{H}_t \circ \overline{\psi}_t|_{L^\infty(N_1)} dt.$
\end{enumerate}
\end{lma}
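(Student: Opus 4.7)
My plan is to exploit the fact that the cutoff $\la^{I,\delta}$ is identically $1$ on the open set $N \times I^{\delta/2}$, which strictly contains $N \times I$, so that the new Hamiltonian $\overline{H}^{I,\delta}$ coincides with $\overline{H}$ on an open neighborhood of the trajectories of the original flow through $I \cdot N_1$. Property \ref{item: cutoff Property 1} will then follow from uniqueness of Hamiltonian ODEs, while Property \ref{item: cutoff Property 2} reduces to a change-of-variables estimate combined with the degree-$1$ homogeneity of $\overline{H}$.

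For \ref{item: cutoff Property 1}, set $U_t := \overline{\psi}_t(N \times I^{\delta/2})$, an open subset of $SN$. For any $x \in U_t$, one has $\overline{\psi}_t^{-1}(x) \in N \times I^{\delta/2}$, hence $\la^{I,\delta}(\overline{\psi}_t^{-1}(x)) = 1$ and $\overline{H}^{I,\delta}(t,x) = \overline{H}(t,x)$. In particular, the Hamiltonian vector fields of $\overline{H}^{I,\delta}_t$ and $\overline{H}_t$ agree on $U_t$. Given any $y \in I \cdot N_1 \subset N \times I^{\delta/2}$, the curve $t \mapsto \overline{\psi}_t(y)$ lies in $U_t$ for all $t$ and therefore satisfies the Hamiltonian ODE of $\overline{H}^{I,\delta}$; by uniqueness it coincides with the trajectory of $\overline{H}^{I,\delta}$ starting at $y$, which proves \ref{item: cutoff Property 1}.

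For \ref{item: cutoff Property 2}, the change of variables $y = \overline{\psi}_t^{-1}(x)$ gives
\[
|\overline{H}^{I,\delta}_t|_{L^\infty(SN)} = \sup_{y \in SN} |\overline{H}(t,\overline{\psi}_t(y)) \cdot \la^{I,\delta}(y)|.
\]
Now $\la^{I,\delta}$ is bounded by $1$ and supported in $N \times I^\delta$, and the $\R_{>0}$-equivariance of $\overline{\psi}_t$ together with the degree-$1$ homogeneity of $\overline{H}$ yield $\overline{H}(t,\overline{\psi}_t(x,r)) = r \cdot (\overline{H}_t \circ \overline{\psi}_t)(x,1)$ for $(x,r) \in N \times \R_{>0}$. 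Since $r \leq \sup I^\delta = e^\delta \sup I$ on the support, the displayed supremum is bounded by $e^\delta \sup I \cdot |\overline{H}_t \circ \overline{\psi}_t|_{L^\infty(N_1)}$, and integrating in $t$ produces \ref{item: cutoff Property 2}.

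The main subtle point is the placement of the cutoff in \ref{item: cutoff Property 1}: a direct computation of the Hamiltonian vector field of $\overline{H}_t \cdot (\la^{I,\delta} \circ \overline{\psi}_t^{-1})$ via the Leibniz rule produces an unwanted cross-term from differentiating the cutoff, which one might worry would perturb the flow on $I \cdot N_1$. The resolution, implicit in the specific choice of composing $\la^{I,\delta}$ with $\overline{\psi}_t^{-1}$ rather than acting directly on $SN$, is that along the original trajectories through $I \cdot N_1$ the factor $\la^{I,\delta} \circ \overline{\psi}_t^{-1}$ is locally constantly equal to $1$, so this cross-term vanishes in a neighborhood; the open-set argument above bypasses the need to write this computation out explicitly.
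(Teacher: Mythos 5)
Your proposal is correct and follows essentially the same route as the paper's proof: Property C1 via agreement of the Hamiltonian vector fields on an open neighborhood of the trajectories through $I\cdot N_1$ plus uniqueness of solutions of ODEs, and Property C2 via the support of the cutoff together with $\R_{>0}$-equivariance of the flow and degree-$1$ homogeneity of $\overline{H}$. Your closing remark about the potential Leibniz cross-term, and why the specific composition with $\overline{\psi}_t^{-1}$ makes it vanish near the relevant trajectories, correctly identifies the one subtlety the paper leaves implicit.
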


Here $\sup I = b.$

\begin{proof}[Proof of Lemma \ref{Lemma: cutoff}]
The proof of Property \ref{item: cutoff Property 1} is simply the fact that for $x \in I \cdot N$ and $t \in [0,1],$ on $\overline{\psi}_t(I \cdot N_1)$ the Hamiltonian vector fields of $\overline{H}^{I,\delta}(t,x)$ and $\overline{H}(t,x)$ coincide, therefore $\overline{\psi}_t(x)$ is a solution of the ODE defining the Hamiltonian flow of $\overline{H}^{I,\delta}$ with initial condition $x \in I\cdot N_1.$ The conclusion follows by uniqueness of solutions to ODE.

The proof of Property \ref{item: cutoff Property 2} is that outside $I^\delta \cdot N_1,$ where $I^\delta = (e^{-\delta}a,e^{\delta}b),$ the cut-off $\la^{I,\delta}$ vanishes. Therefore $\overline{H}_t^{I,\de}$ vanishes outside $\overline{\psi}_t(I^\de \cdot N_1).$ Hence \[\int_0^1 |\overline{H}^{I,\delta}_t|_{L^\infty(SN)} dt \leq \int_0^1 |\overline{H}_t|_{L^\infty(\overline{\psi}_t(I^\de \cdot N_1))} dt = \] \[= \int_0^1 |\overline{H}_t \circ \overline{\psi}_t|_{L^\infty(I^\de \cdot N_1)} dt \leq e^\delta (\sup I) \cdot \int_0^1 |\overline{H}_t \circ \overline{\psi}_t|_{L^\infty(N_1)} dt.\] In the last inequality we used that $\overline{\psi}_t$ is $\R_{>0}$-equivariant and $\overline{H}_t$ is $\R_{>0}$-homogeneous of degree $1,$ and noted that $\sup I^\de = e^\delta \cdot \sup I.$
\end{proof}

\begin{proof}[Proof of Proposition \ref{Proposition: energy-capacity}]
Assume that $\overline{\psi}$ displaces a compact subset $A$ of $SN.$ Let $\psi$ be generated by the contact Hamiltonian $H(t,x)$ with flow $\{\psi_t \}.$ Denote by $\overline{H}(t,x)$ its degree $1$ homogeneous lift to $SN$ with flow $\{\overline{\psi}_t\}$ the canonical lift of $\{\psi_t\}$ to $SN.$ Apply Usher's trick (Propostion \ref{Proposition: Usher's trick}) to $\overline{H}(t,x)$ to obtain a degree $1$ homogeneous Hamiltonian $\overline{K}(t,x),$ satisfying Properties \ref{item: Usher property 1} and \ref{item: Usher property 2}. The time-one map $\overline{\psi} = \psi^1_{\overline{K}}$ of the flow of $\overline{K}$ displaces $A.$ Hence, if we choose $I = (\min_A r_\al, \max_A r_\al),$ then for any $\delta >0$ by Property \ref{item: cutoff Property 1} so does the the time-one map $\psi^1_{\overline{K}^{I,\delta}}$ of the cut-off $\overline{K}^{I,\delta}$ as performed in Lemma \ref{Lemma: cutoff}. Therefore by \cite[Theorem 1.1]{LalondeMcDuffEnergy} of Lalonde-McDuff, we have \[\frac{1}{4} c(A) \leq \int_0^1 |\overline{K}^{I,\delta}_t|_{L^\infty(SN)} \,dt.\] Furthermore, by Properties \ref{item: cutoff Property 2} and \ref{item: Usher property 2} we estimate \[\int_0^1|\overline{K}^{I,\delta}_t|_{L^\infty(SN)} \,dt \leq e^\delta h(A) \cdot \int_0^1 |\overline{K}_t \circ \psi^t_{\overline{K}}|_{L^\infty(N_1)} \, dt = e^\delta h(A) \cdot \int_0^1 |\overline{H}_t|_{L^\infty(N_1)} \, dt,\] recalling that $h(A)=\max_A r_\al.$ As $i_1^* \overline{H}_t = H_t$ for the obvious isomorphism $i_1: N \to N_1,$ we conclude that \[ \frac{1}{4} c(A) \leq e^\delta h(A) \cdot \int_0^1 |H_t|_{L^\infty(N)} \, dt,\] whence by taking infima over contact Hamiltonians $H(t,x)$ generating $\psi$ and over $\delta > 0,$ we obtain \[\frac{1}{4}\widehat{c}_\al(A) \leq |\psi|_\al.\]
 
\end{proof}

\begin{proof}[Proof of Theorem \ref{Theorem: cocycle norm}]
Property \ref{item: metric Property 1} follows from the fact that for $\psi \in \G,$ the condition $\psi \neq 1_N$ is equivalent to $\overline{\psi} \neq 1_{SN},$ and the latter condition implies that $\overline{\psi}$ displaces a ball $B \subset SN$ of positive Gromov radius $c(B) > 0$ and height $h_\al(B)>0,$ and hence by Proposition \ref{Proposition: energy-capacity} we have $|\psi|_\al \geq \frac{1}{4} \widehat{c}(B) > 0.$

For property \ref{item: metric Property 2} assume that $\phi$ is generated by contact Hamiltonian $F_t$ and $\psi$ is generated by $G_t.$  We claim that $\phi\psi$ is generated by contact Hamiltonian $H_t$ with \begin{equation}\label{Equation: triangle inequality} \int_0^1 |H_t|_{L^\infty(N)} \,dt \leq \int_0^1 |F_t|_{L^\infty(N)} \,dt + \int_0^1 |G_t|_{L^\infty(N)} \,dt.\end{equation} Then taking infima first on the left-hand side and then on the right hand side finishes the proof. Note that the functional $F \mapsto \int_0^1 |F_t|_{L^\infty(N)} \,dt$ is invariant under time-reparametrizations $F(t,x) \mapsto \tau'(t) F(\tau(t),x)$ acting as $\{ \phi_t \}_{t\in [0,1]} \mapsto \{ \phi_{\tau(t)} \}_{t\in [0,1]}$ on the flows, for $\tau:[0,1] \to [0,1]$ a smooth function with $\tau(0)=0, \tau(1)=1$ and $\tau' \geq 0.$ Choose such reparametrizations $\tau_1, \tau_2$ with $\mathrm{supp} (\tau'_1) \subset [1/2,1]$ and $\mathrm{supp} (\tau'_2) \subset [0,1/2].$ Then $\phi\psi$ is generated by the flow $\phi_{\tau_1(t)}\psi_{\tau_2(t)}.$ The contact Hamiltonian $H_t$ of this flow satisfies $H_t = \tau'_2(t) G_{\tau_2(t)}$ for $t\in [0,1/2]$ and $H_t = \tau'_1(t) F_{\tau_1(t)}$ for $t \in [1/2,1],$ whence Equation \ref{Equation: triangle inequality} follows immediately.

Property \ref{item: metric Property 3} is an immediate consequence of the fact that if $H(t,x)$ generates $\psi_t$ with endpoint $\psi$ then $-H(1-t,x)$ generates $\psi_{1-t}\psi^{-1}$ with endpoint $\psi^{-1}.$

Property \ref{item: metric Property 4} follows from the fact that if $X_t$ generates $\{\phi_t\}$ with $\phi_1 = \phi,$ then $Y_t = \psi_*(X_t)$ that is $Y_t(x)= (D\psi)(\psi^{-1}x) X_t(\psi^{-1}x)$ generates $\{\psi\phi_t\psi^{-1}\}$ with $\psi\phi_1\psi^{-1} = \psi\phi\psi^{-1},$ and hence if $F_t = \al(X_t)$ is the contact Hamiltonian for $X_t$ then $G_t = \al(\psi_*(X_t))=(\psi^*\al)(X_t) \circ \psi^{-1}$ is the contact Hamiltonian for $Y_t.$ 
\end{proof}


\begin{proof}[Proof of Proposition \ref{Proposition: $C^0$-uniqueness}]
Since $\phi$ is a uniform limit of continuous maps, it is continuous. Assume that $\phi \neq \psi.$ Then $\phi \psi^{-1}$ displaces a small closed ball $B \subset N.$ By uniform convergence, this implies that $\psi_i \psi^{-1}$ displaces $B$ for all $i$ large enough. Therefore by Corollary \ref{Corollary: contact energy-capacity} \[d_\al(\psi_i,\psi) \geq \frac{1}{4} \til{c}_\al(B) > 0,\] in contradiction to assumption \ref{item: C0 Hofer}.
\end{proof}
\begin{proof}[Proof of Proposition \ref{Proposition: example of unboundedness}]
Our approach is based on \cite[Exercise 7.2.E]{P-book} (cf. \cite{LalondeMcduffLinfty}) in the Hamiltonian case. This approach requires certain knowledge about the orbit class evaluation map \[ev: \pi_1(\Cont_0(P_1,\xi)) \to \pi_1(P_1).\]
We first note that $\pi_1(P_1) \cong \mathrm{H}_{\Z},$ the integer Heisenberg group of unipotent upper triangular $3 \times 3$ matrices with integer coefficients. It is easy to see that $\mathrm{H}_{\Z} = \langle x,y,z \,|\; [x,y]= z,\, [z,x] = 1,\, [z,y] = 1 \rangle.$ Therefore the centre $\mathrm{Z}(\pi_1(P_1))$ of $\pi_1(P_1)$ satisfies  $\mathrm{Z}(\pi_1(P_1)) = \langle z \rangle.$ Moreover, $z$ is the orbit class of the Reeb loop. Finally, as is true for all diffeomorphism groups, $\mathrm{image}(ev) \subset \mathrm{Z}(\pi_1(P_1)).$ We conclude that \begin{equation}\label{Equation: evaluation orbit class} \mathrm{image}(ev) \subset \langle z \rangle.\end{equation}

Arguing up to $\varepsilon,$ there is no loss of generality for the purposes of this argument in assuming that the autonomous vector field in the neighbourhood $U=[(-\eps,\eps)] \times S^1$ for small $\eps>0$ of this curve is constant with respect to the stadard trivialization of the tangent bundle on $T^2.$ In this neigbourhood the prequantization can be trivialized as $U \times S^1$ with the contact form $\al_1 = d\theta + xdy,$ where $x:(-\eps,\eps) \to \R, \; y:S^1 \to S^1$ are the standard coordinates, and $\theta: S^1 \to S^1$ the standard coordinate. Hence, over $U$ the symplectization of $P_1$ trivializes as $U \times S^1 \times \R_{>0}$ with the symplectic form $\om = d(r(d\theta + xdy)) = dr \wedge d\theta + d(rx) \wedge dy.$ Putting $X = rx$ for a new coordinate, the symplectic form splits as $\om = dr \wedge d\theta + dX \wedge dy.$ Passing to universal covers we have $\widetilde{P_1|_U} = \widetilde{U} \times \R,$ where $\til{U} = (-\eps,\eps) \times \R$ and $\widetilde{SP_1|_U} = S(\widetilde{P_1|_U}) = \widetilde{U} \times \R \times \R_{>0}$ with the symplectic form $dr \wedge d\theta + d(rx) \wedge dy = dr \wedge d\theta + dX \wedge dy,$ where now $\theta:\R \to \R,$ and $y:\R \to \R$ are the standard coordinates. For a small $\delta>0,$ let $B'(t') = \{(r,\theta) \in \R_{>0} \times \R | \; \delta \leq r \leq 1-\delta, \; 0\leq \theta \leq \frac{t'}{1-2\delta} \} \subset \R \times \R_{>0}$ be a rectangle of capacity (area) $t'$ in the $(r,\theta)$ half-plane. Note that in particular $r \geq \delta$ on $B'(t'),$ and hence the image of $\til{U} \times B'(t)$ in the new coordinates $(X,y,\theta,r)$ contains the strip $[-\eps \cdot \delta , \eps \cdot \delta] \times \R \times B'(t').$ Pick the rectangle $B(t') = \{(X,y)|\; X \in [-\eps \cdot \delta , \eps \cdot \delta], \; y \in [0, \frac{t'}{2\eps \cdot \delta}]\} \subset [-\eps \cdot \delta , \eps \cdot \delta] \times \R$ of capacity $t'.$ Then the Gromov radius of $A(t')=B(t')\times B'(t')$ satisfies \[c(A(t')) \geq t'.\] Consider $A(t')$ as a subset of $\widetilde{SP_1}.$

Fixing $t > 0,$ we claim that \[|\psi^t_H|_{\al_1} \geq c\cdot t,\] for $c = \frac{1}{2} {\mathrm{const}({H|_U}) \cdot \eps \cdot \delta },$ where $ \mathrm{const}({H|_U}) > 0$ is (a lower bound on) the Riemannian length of the Hamiltonian vector field of $H|_U$ with respect to the standard Riemannian metric on $T^2.$

Fix any $t'<4 c\cdot t,$ and let $F(s,x) \in \sm{[0,1] \times P_1,\R}$ be a contact Hamiltonian generating a contact isotopy $\{\psi^s_F\}_{s \in [0,1]}$ with $\psi^1_F = \psi^t_H.$ Consider its deck-invariant lift $\widetilde{F}$ to the universal cover of $P_1$, generating $\widetilde{\psi}^s_F$. We first show that $\widetilde{\psi}^1_F$ displaces $A(t').$ Taking $F_0 = t \pi^* H,$ it is immediate to see that $\widetilde{\psi}^1_{F_0}$ displaces $A(t')$ by considering the projection of $\widetilde{\psi}^1_{F_0} (A(t'))$ to the universal cover of $T^2.$ Any other isotopy $\{\psi^s_F\}$ differs from $\{\psi^s_{F_0}\}$ by a loop $\gamma = \{\gamma_s\}_{s \in [0,1]}$ in $\Cont_0(P_1,\xi),$ based at the identity: that is $\psi^s_F = \gamma_s \psi^s_{F_0}.$ Hence $\widetilde{\psi}^1_{F} = D(ev([\gamma]))\widetilde{\psi}^1_{F_0},$ $D(a)$ denoting the Deck transformation corresponding to an element $a \in \pi_1(P_1).$ Hence by \eqref{Equation: evaluation orbit class}, the projections of $\widetilde{\psi}^1_{F} (A(t'))$ and $\widetilde{\psi}^1_{F_0} (A(t'))$ to the universal cover of $T^2$ coincide, and therefore $\widetilde{\psi}^1_{F}$ also displaces $A(t').$

We proceed by cutting $\widetilde{F}$ off by a function $\la_c: \widetilde{P_1} \to [0,1]$ outside a compact subset containing $\bigcup_{s \in [0,1]} \widetilde{\psi}^s_F (\pi(A(t'))$ in its interior, where $\pi: S(\widetilde{P_1}) \to \widetilde{P_1}$ is the natural projection. We obtain a compactly supported contactomorphism of $\widetilde{P_1}$ with compactly supported Hamiltonian $\til{F}^{\la_c}$ satisfying \[\int_0^1 |\til{F}^{\la_c}_s|_{L^\infty(\widetilde{P_1})} ds \leq \int_0^1 |F_s|_{L^\infty(P_1)} ds.\] Moreover the lift $\overline{\psi}^s_{\til{F}^{\la_c}}$ of the flow of $\til{F}^{\la_c}$ to $S(\widetilde{P_1})$ displaces $A(t').$ Hence, noting that $h(A(t')) < 1,$ by Proposition \ref{Proposition: energy-capacity} we have \[\int_0^1 |\til{F}^{\la_c}_s|_{L^\infty(\widetilde{P_1})} ds \geq \frac{t'}{4}.\] Therefore \[\int_0^1 |F_s|_{L^\infty(P_1)} ds \geq \frac{t'}{4},\] and hence first taking the infimum over all contact Hamiltonians $F_s$ generating $\psi^t_H$ and then taking supremum over all $t'<4 c\cdot t,$ we finish the proof. 
\end{proof}

\begin{proof}[Proof of Proposition \ref{Proposition: Quant is closed}]
Assume $\psi \in \G \setminus \cH,$ and $\phi \in \cH.$ Let $\chi = \psi^{-1} \in \G \setminus \cH.$ Then there exists a point $x \in N$ with $\mu:=\max_N \la_{\chi}^{-1}=\la^{-1}_{\chi}(x) > 1.$ There is a ball $B(x) \subset N$ such that $(\la_\chi^{-1})|_{B(x)} \geq \mu^{3/4}.$
Consider the interval $I=(\mu^{-1/4},\mu^{1/4}).$ We claim that $(\overline{\phi}) (\overline{\psi})^{-1} = (\overline{\phi}) (\overline{\chi})$ displaces $B(x)\times I,$ and therefore denoting $\varepsilon = \frac{1}{4} \widehat{c}_\al (B(x)\times I)$ we have $d_\al(\psi,\phi) \geq \varepsilon > 0$ by Proposition \ref{Proposition: energy-capacity}. Indeed, for $(y,r) \in SN,$ $\overline{\chi}(y,r) = (\chi(y),(\la_\chi)^{-1}(y) r),$ hence while $\max_{B(x) \times I} r = \mu^{1/4},$ \[\min_{(\overline{\phi}\overline{\chi})(B(x) \times I)} r = \min_{\overline{\chi}(B(x) \times I)} r = \mu^{-1/4} \min_{B(x)} (\la_\chi)^{-1} \geq \mu^{3/4-1/4} = \mu^{1/2} > \mu^{1/4}.\] This finishes the proof.
\end{proof}

\begin{proof}[Proof of Propostion \ref{Proposition: Osc via L infty}]
Note that \[d_\al(\psi,\cR) = \inf_{\eta \in \R} |\phi^\eta_R\psi|_\al,\] where we use Properties \ref{item: metric Property 3} and \ref{item: metric Property 4} of Theorem \ref{Theorem: cocycle norm}. Now \[\inf_{\eta \in \R} |\phi^\eta_R\psi|_\al = \inf_{H,b(t)} \int_0^1 |-b(t) + H_t|_{L^\infty(N)} \,dt,\] where $H$ is a contact Hamiltonian with $\phi^1_H = \psi$ and $b:[0,1] \to \R$ is a smooth function. Indeed, for a given $\eta$ and $b$ with $B(t)=-\int_0^t b(s) \,ds$ satisfying $B(1)=\eta$ and any path $\{\chi_t\}$ generating $\phi^\eta_R\psi,$ we can write $\chi_t = \phi_R^{B(t)} \phi^t_H,$ for a contact path $\{\phi^t_H\}$ with contact Hamiltonian $H$ and time-one map $\phi^1_H = \psi.$ For any given time $t,$ \[\inf_{b \in \R}|-b + H_t|_{L^\infty(N)} = \frac{1}{2} \osc_N H_t,\] and in fact the infimum is achieved for $b_*(t)=\frac{1}{2}(\max H_t+\min H_t)$ which is a continuous function of $t,$ and hence \[\inf_H \inf_{b(t)} \int_0^1 |-b(t) + H_t|_{L^\infty(N)} \,dt = \frac{1}{2}\inf_H \int_0^1 \osc_N H_t \, dt = \frac{1}{2} |\psi|^{\osc}_\al.\] This finishes the proof.
\end{proof}

Before we continue with the proof of Theorem \ref{Theorem: translated points filling.} we recall some preliminary notions on the Rabinowitz-Floer functionl \cite{AlbersFrauenfelderLeafwise,AlbersMerryTranslated,AlbersMerryFuchs} and prove several useful auxiliary results.

Consider the exact strong symplectic filling $(M,d\overline{\beta}),\; \overline{\beta}|_N = \al$ of $N.$ Denote by \[(W,\om=d\beta)\] the completion of $(M,d\overline{\beta})$ with a cylindrical end along $N = \del M.$ Note that $SN$ embeds symplectically into $W$ by the flow of the Liouville vector field $L$ defined by $\iota_L d\beta = \beta,$ and that $N$ is a hypersurface of restricted contact type in $(W,d\beta).$ 

Consider a compactly supported Hamiltonian $G \in C^\infty_c([0,1] \times W, \R),$ and a defining function $F=F^{\kappa',\kappa}$ with $\kappa' > \kappa >0$ for $N$ given by $F|_{W \setminus SN} \equiv c_-,$ and $F|_{SN} \equiv c_-$ for $r < e^{-\kappa'},$ $F|_{SN} \equiv c_+$ for $r>e^{\kappa'},$ where $c_-<0<c_+$ are appropriately chosen constants, while $F|_{SN} \equiv r-1$ on $N \times (e^{-\kappa},e^{\kappa}).$

Choose reparametrization functions $\tau_1,\tau_2:[0,1] \to [0,1]$ as in the proof of Theorem \ref{Theorem: cocycle norm}, that is for $j\in {1,2},$ $\tau_j(t) \equiv 0$ near $t = 0,$ $\tau_j(t) \equiv 1$ near $t =1,$ $\tau'_j(t) \geq 0$ for all $t \in [0,1]$ and $\mathrm{supp} (\tau'_1) \subset [1/2,1],$ $\mathrm{supp} (\tau'_2) \subset [0,1/2].$ 


Let $\cL W$ denote the loop space of $W.$ Following \cite{AlbersFrauenfelderLeafwise,AlbersMerryTranslated,AlbersMerryFuchs} define the (perturbed) Rabinowitz-Floer functional\footnote{The sign differences between the functional here and in \cite{AlbersFrauenfelderLeafwise} are explained by differing conventions for the Hamiltonian vector field $X_G$ of a Hamiltonian $G$ and by our using $(-\eta)$ as the Lagrange multiplier. The convention we use is $\iota_{X_G} d\beta = - dG.$} \[\cA^F_G: \cL W \times \R \to \R\] by \[\cA^F_G(x,\eta) = -\int_0^1 x^* \beta + \int_0^1 \tau'_1(t) \cdot G(\tau_1(t),x(t))\,dt - \eta \int_0^1 \tau'_2(t) F(x(t)) \,dt.\]

The compactly supported Hamiltonians we use are of the form $\overline{H}^{I,\delta}$ extended by $0$ to $W \setminus SN,$ for $H$ a contact Hamiltonian on $N.$ Given such a Hamiltonian we choose $\kappa$ so that $\mathrm{supp}(\overline{H}^{I,\delta}) \subset [0,1] \times N \times (e^{-\kappa},e^{\kappa}).$ Then we have the following.

\medskip
\begin{lma}\label{Lemma: cutoff Rabinowitz} 
Let $H \in \sm{[0,1]\times N, \R}$ be a contact Hamiltonian. Given any interval $I$ containing $1$ and any $\delta>0,$ the critical points of the perturbed Rabinowitz functional $\cA^F_{\overline{H}^{I,\delta}}$ on $\cL{W} \times \R$ (and consequently on  $\cL{SN} \times \R$) coincide with the critical points of $\cA^F_{\overline{H}}$ on $\cL{SN} \times \R.$ Moreover a critical point $(x_0,\eta_0)$ is Morse for $\cA^F_{\overline{H}^{I,\delta}}$ if and only if it is Morse for $\cA^F_{\overline{H}}.$
\end{lma}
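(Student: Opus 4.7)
The plan is to analyze the Euler--Lagrange equations of both perturbed Rabinowitz functionals and invoke Property \ref{item: cutoff Property 1} of Lemma \ref{Lemma: cutoff} to identify the critical sets. A critical point $(x,\eta)$ of $\cA^F_G$ satisfies
\begin{equation*}
\dot x(t) = \tau'_1(t)\, X_G(\tau_1(t), x(t)) - \eta\,\tau'_2(t)\, X_F(x(t)), \qquad \int_0^1 \tau'_2(t)\, F(x(t))\, dt = 0.
\end{equation*}
Because $\supp(\tau'_1) \subset [1/2,1]$ and $\supp(\tau'_2) \subset [0,1/2]$ are disjoint, the ODE decouples: on $[0,1/2]$ the loop $x$ moves only under $X_F$, while on $[1/2,1]$ only under $X_G$. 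Since $dF(X_F) = 0$, $F(x(t))$ is constant on $[0,1/2]$, and the Lagrange constraint then forces $F(x(0)) = 0$, i.e.\ $x(0) \in F^{-1}(0) = N$. As $X_F$ is tangent to level sets of $F$, we have $x(t) \in N$ throughout $[0,1/2]$, and in particular $x(1/2) \in N$.

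With $x(1/2) \in N \subset I \cdot N_1$, I would apply Property \ref{item: cutoff Property 1} of Lemma \ref{Lemma: cutoff} in its reparametrized form: for every $t \in [1/2,1]$, $\overline{\psi}^{I,\de}_{\tau_1(t)}(x(1/2)) = \overline{\psi}_{\tau_1(t)}(x(1/2))$. Therefore the trajectories on $[1/2,1]$ produced from $x(1/2)$ by $G = \overline H$ and by $G = \overline H^{I,\de}$ coincide as paths in $SN$, and the closing condition $x(1) = x(0)$ holds for the one Hamiltonian if and only if it holds for the other. This establishes the bijection between the critical points of $\cA^F_{\overline H}$ on $\cL SN \times \R$ and those of $\cA^F_{\overline H^{I,\de}}$ on $\cL W \times \R$. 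In particular, any critical point of the $W$-functional has its loop automatically contained in $SN$ (since $x(1/2) \in N$ and $\overline H^{I,\de}$ is supported in $[0,1] \times SN$), so the restriction to $\cL SN \times \R$ loses nothing.

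For the Morse statement I would argue that the two Hamiltonians agree to infinite order in a $C^0$-neighborhood of the common critical loop $x_0$. Along $x_0(t) = \overline{\psi}_{\tau_1(t)}(x_0(1/2))$ on $[1/2,1]$ the base point $\overline{\psi}_{\tau_1(t)}^{-1}(x_0(t)) = x_0(1/2) \in N$ lies in the interior of the plateau region $I^{\de/2} \cdot N_1$ where $\la^{I,\de} \equiv 1$; by continuity of the flow, the same remains true in an open neighborhood of $x_0$, so the germs of $\overline H^{I,\de}$ and $\overline H$ along $x_0$ coincide. Since $SN$ is open in $W$, the ambient tangent spaces $T_{x_0}\cL W \oplus \R = T_{x_0}\cL SN \oplus \R$ agree, and the Hessians of the two Rabinowitz functionals, being determined by the second derivatives of $G$ and $F$ along $x_0$, are identical as quadratic forms; hence one is nondegenerate precisely when the other is. The main bookkeeping obstacle is correctly handling the time reparametrization $\tau_1$ inside the cutoff $\la^{I,\de}(\overline{\psi}_s^{-1}\,\cdot)$ evaluated at $s = \tau_1(t)$, but this is exactly resolved by the identity $\overline{\psi}_{\tau_1(t)}^{-1}(x_0(t)) = x_0(1/2)$ noted above.
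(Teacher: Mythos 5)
Your proposal is correct and follows the same route as the paper: the paper's (much terser) proof likewise reduces the first claim to Property \ref{item: cutoff Property 1} of Lemma \ref{Lemma: cutoff} after noting that critical loops are forced onto $N_1$ by the Lagrange-multiplier constraint, and disposes of the Morse claim by observing that the Hessians agree at a common critical point — which your neighborhood argument (the cutoff factor $\la^{I,\delta}(\overline{\psi}_{\tau_1(t)}^{-1}\,\cdot)$ is identically $1$ near the critical loop, so the two functionals coincide near $(x_0,\eta_0)$) justifies cleanly. No gaps.
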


\begin{proof}[Proof of Lemma \ref{Lemma: cutoff Rabinowitz}]
The first statement is a direct consequence of Lemma \ref{Lemma: cutoff} property \ref{item: cutoff Property 1}, because each leafwise-intersection point lies on $N_1.$ Moreover, the Hessians of the two functionals at such a common critical point agree, whence the second statement is immediate.
\end{proof}

Moreover, a computation that is by now standard (compare \cite[Lemma 2.2]{AlbersMerryTranslated}) shows that $(x_0,\eta_0)$ is a critical point of $\cA^F_{\overline{H}}$ if and only if $x_0 \in N_1 \cong N$ and $(x_0,\eta_0)$ is an algebraic translated point of the contact flow on $N$ generated by $H,$ and $\cA^F_{\overline{H}}(x_0,\eta_0) = \eta_0.$ 

\begin{proof}[Proof of Lemma \ref{Lemma: non-degenerate contactomorphism}]
We begin by recalling useful formulas from \cite{AlbersFrauenfelderLeafwise}. Let $(x_0,\eta_0)$ be a critical point of $\cA^F_{\overline{H}}.$ For a Hamiltonian $P \in C^\infty_c([0,1] \times SN, \R)$ put \[\cL_P SN = \{w \in W^{1,2}([0,1],SN)|\, w(0) = \phi^1_P(w(1))\}\] for the twisted loop space, and consider the diffeomorphism \[\Phi_P:\cL_P SN \to \cL SN,\] given by \[x(t) \mapsto \phi^t_P(x(t)).\] Use $\Phi_{-\eta_0 F + \overline{H}}$ to pull back $\cA^F_{\overline{H}}$ to \[\til{\cA}^F_{\eta_0,\overline{H}} = (\Phi_{-\eta_0 F + \overline{H}} \times 1_\R)^* \cA^F_{\overline{H}}: \cL_{-\eta_0 F +\overline{H}} SN \times \R \to \R.\] Set $w_0:=\Phi^{-1}_{-\eta_0 F + \overline{H}}(x_0) = \mathrm{const}.$ By \cite[Equation A.11]{AlbersFrauenfelderLeafwise} we see that the Hessian of $\til{\cA}^F_{\eta_0,\overline{H}}$ at $(w_0,\eta_0)$ is given by \[\cH_{\til{\cA}^F_{\eta_0,\overline{H}}}(w_0,\eta_0)[(\widehat{w}_1,\widehat{\eta}_1),(\widehat{w}_2,\widehat{\eta}_2)] = \int_0^1 \om(\del_t \widehat{w}_1,\widehat{w}_2) - \wh{\eta}_1 \int_0^1 \tau'_2(t) dF(w_0)[\wh{w}_2] - \wh{\eta}_2 \int_0^1 \tau'_2(t) dF(w_0)[\wh{w}_1].\]

Choosing an inner product on $T_{w_0}(\cL_{-\eta_0 F +\overline{H}} SN) \times T_{\eta_0}\R \cong T_{w_0}(\cL_{-\eta_0 F +\overline{H}} SN) \times \R$ one turns $\cH_{\til{\cA}^F_{\eta_0,\overline{H}}}(w_0,\eta_0)$ to an operator $L_{(w_0,\eta_0)}: T_{w_0}(\cL_{-\eta_0 F +\overline{H}} SN) \times \R \to (\mathcal{E}_{-\eta_0 F +\overline{H}})_{w_0}^* \times \R,$ where \[(\mathcal{E}_{-\eta_0 F +\overline{H}})_{w_0} = L^2([0,1], {w_0}^* T(SN)),\] that is a Fredholm operator of index $0,$ since its extension to $L^2$ is self-adjoint. Therefore, to show that $\til{\cA}^F_{\eta_0,\overline{H}}$ is Morse at $(w_0,\eta_0),$ or equivalently $\cA^F_{\overline{H}}$ is Morse at $(x_0,\eta_0),$ it is necessary and sufficent that the kernel of $\cH_{\til{\cA}^F_{\eta_0,\overline{H}}}(w_0,\eta_0)$ be trivial.

Let $(\wh{w}_1,\wh{\eta}_1)$ be such that $\cH_{\til{\cA}^F_{\eta_0,\overline{H}}}(w_0,\eta_0)[(\widehat{w}_1,\widehat{\eta}_1),(\widehat{w}_2,\widehat{\eta}_2)] = 0$ for all $(\widehat{w}_2,\widehat{\eta}_2).$ Note that \[\wh{w}_1, \wh{w}_2 \in W^{1,2}([0,1],T_{w_0} SN)\] with the condition that \begin{eqnarray}\label{Equation: variations of twisted loops}\wh{w}_1(0)= D\overline{\phi}_{-\eta_0 F + \overline{H}}(\wh{w}_1(1)), \\ \wh{w}_2(0)= D\overline{\phi}_{-\eta_0 F + \overline{H}}(\wh{w}_2(1)) \nonumber.\end{eqnarray} Noting that $w_0 \in N_1,$ we consider the splitting \[T_{w_0} (SN) \cong \R\langle R \rangle \oplus \R\langle \del_r \rangle \oplus \xi_{w_0},\] and write \[\wh{w}_1(t) = a_1(t)R + b_1(t) \del_r + \wh{w}^\xi_1(t),\] \[\wh{w}_2(t) = a_2(t)R + b_2(t) \del_r + \wh{w}^\xi_2(t)\] with respect to this splitting. Plugging in different $(\wh{w}_2,\wh{\eta}_2)$ - of the form $(0,\wh{\eta}_2),$ $(b_2 \del_r,0),$ $(a_2 R, 0),$ and $(\wh{w}^\xi_2,0),$ we obtain the identities \begin{enumerate}
\item \label{item: Morse RF 1} $\int_0^1 \tau'_2(t)b_1(t) \,dt = 0,$
\item \label{item: Morse RF 2} $a'_1 + \wh{\eta}_1 \tau'_1 \equiv 0,$
\item \label{item: Morse RF 3} $b'_1 \equiv 0,$
\item \label{item: Morse RF 4} $\del_t \wh{w}^\xi_1 \equiv 0.$
\end{enumerate}

From identities \ref{item: Morse RF 1} and \ref{item: Morse RF 3}, we conclude that $b_1 \equiv 0,$ as $\int_0^1 \tau'_2(t) \, dt = 1.$ Identity \ref{item: Morse RF 4} means that $\wh{w}^\xi_1$ is a constant vector. Identity \ref{item: Morse RF 2} implies that \begin{equation}\label{Equation: Morse RF difference in a} a_1(1) - a_1(0) = - \wh{\eta}_1.\end{equation} Now Equation \ref{Equation: variations of twisted loops} implies that \[a_1(0) R + \wh{w}^\xi_1(0) = D(\overline{\phi}_{-\eta_0 F + \overline{H}})(a_1(1)R + \wh{w}^\xi_1(1))\] which, noting that $\phi^{-\eta_0}_R w_0 = x_0,$ is equivalent by Equation \ref{Equation: differential of the lift} to the conditions \[d\la_{\psi \phi^{-\eta_0}_R}(w_0)(a_1(1)R + \wh{w}^\xi_1(1)) = d\la_{\psi}(x_0) \circ D(\phi^{-\eta_0}_R)(w_0)(a_1(1)R + \wh{w}^\xi_1(1)) = 0,\]  and \[a_1(0) R + \wh{w}^\xi_1(0) = D({\phi}_{-\eta_0 F + \overline{H}})(a_1(1)R + \wh{w}^\xi_1(1)).\] Evaluating $\al_{w_0}$ on both sides of the equality, and noting that $\la_\psi(x_0) = 1,$ we obtain \begin{equation}\label{Equation a_1(0) equals a_1(1)}
a_1(0) = \al_{w_0}(D({\phi}_{-\eta_0 F + \overline{H}})(a_1(1)R + \wh{w}^\xi_1(1))) = \la_\psi(x_0) \cdot \al_{w_0}(a_1(1)R + \wh{w}^\xi_1(1))) = a_1(1).
\end{equation}

Equations (\ref{Equation: Morse RF difference in a}) and (\ref{Equation a_1(0) equals a_1(1)}) imply that $\wh{\eta}_1 = 0,$ and therefore $a'_1 \equiv 0.$ This means that $a_1$ and therefore $\wh{w}_1 = a_1 R + \wh{w}_1^\xi$ is a constant vector, which we denote by $\theta_{w_0}.$ We conclude that $\theta_{w_0} \mapsto D(\phi^{-\eta_0}_R)(w_0)(\theta_{w_0})$ induces an injection $\iota(w_0,\eta_0)$ of $\ker \cH_{\til{\cA}^F_{\eta_0,\overline{H}}}(w_0,\eta_0)$ into $\ker (D(\phi^{-\eta_0}_R\psi)(x_0) - 1_{TN_{x_0}}) \cap \ker (d\la_\psi(x_0)).$ The Lemma follows immediately.
\end{proof}

\begin{rmk}
Retracing the steps in the above argument we easily see that the injection $\iota(w_0,\eta_0)$ is in fact surjective, and hence an isomorphism of vector spaces \[\ker \cH_{\til{\cA}^F_{\eta_0,\overline{H}}}(w_0,\eta_0) \cong \ker (D(\phi^{-\eta_0}_R\psi)(x_0) - 1_{TN_{x_0}}) \cap \ker (d\la_\psi(x_0)).\]
\end{rmk}

\begin{proof}[Proof of Theorem \ref{Theorem: translated points filling.}]
Given a contactomorphism $\psi$ we note that for each $\eta \in \R$, $x$ is a translated point of $\psi$ if and only if $x$ is a translated point of $\phi^\eta_R \psi.$ Moreover by the definition of non-degeneracy (Definition \ref{Definition: non-degenerate contactomorphism}) $(x,\eta_0)$ is a non-degenerate algebraic translated point of $\psi$ if and only if $(x,\eta_0+\eta)$ is a non-degenerate algebraic translated point of $\phi^\eta_R \psi.$ If $\psi$ is of small oscillation energy there exists $\eta_* \in \R$ such that $|\phi^{\eta_*}_R \psi|_\al < \rho(\al).$ Therefore there exists a Hamiltonian $H'$ generating $\{\phi^t_{H'}\}$ with $\phi^1_{H'} = \phi^{\eta_*}_R \psi$ with $\int_0^1 |H'_t|_{L^\infty(N)} \,dt < \rho(\al).$ Then by Usher's trick there exists a cutoff $\overline{H}^{I,\delta}$ with $I=(e^{-\eps},e^\eps)$ for $\eps > 0$ of the degree $1$ homogeneous lift $\overline{H}$ to $SN$ of another contact Hamiltonian $H$ with $\phi^1_{H} = \phi^{\eta_*}_R \psi$ such that \[\int_0^1 |\overline{H}^{I,\delta}|_{L^\infty(SN)} \,dt < \rho(\al).\] Since $M$ is a strong exact symplectic filling of $(N,\al)$ we can consider $\overline{H}^{I,\delta}$ as a function on $W$ and apply \cite[Theorem A]{AlbersFrauenfelderLeafwise} to see that the Hamiltonian flow of $\overline{H}^{I,\delta}$ possesses a leafwise-intersection point relative to the hypersurface $N_1 \subset W.$ By Lemma \ref{Lemma: cutoff Rabinowitz} this gives us a translated point of $\phi^{\eta_*}_R \psi$ and therefore a translated point of $\psi.$ If $\psi$ is non-degenerate, then so is $\phi^{\eta_*}_R \psi,$ and therefore $\overline{H}^{I,\delta}$ is non-degenerate relative to $N_1 \subset W.$ Therefore \cite[Proposition 2.20]{AlbersFrauenfelderLeafwise} and Lemma \ref{Lemma: cutoff Rabinowitz} finish the proof (see also \cite[Theorem B]{AlbersFrauenfelderLeafwise}).
\end{proof}

\begin{proof}[Proof of Proposition \ref{Proposition: nu2 is unbounded on RP 2n+1}]
Let $c_1,c_2,c>0$ denote generic positive constants, and $c_0 \neq 0$ denote a generic non-zero constant. Consider the Givental quasimorphism \cite{GiventalQuasimorphism} \[\mu_G: \til{\G} \to \R,\] where $\G = \Cont_0(\R P^{2n+1},\xi_{st}),$ for $\xi_{st} = \ker \al_{st}.$ The main result of \cite{GabiGiventalCalabi} by Ben Simon applied to this setting states that for any open ball $B$ in $\C P^n$ that is displaceable by a Hamiltonian diffeomorphism, the restriction of the Givental quasimorphism to $\til{\Ham}_c(B)$ by the chain of natural maps $\til{\Ham}_c(B) \to \til{\Ham}(\C P^n) \xrightarrow{i} \til{\cH} \to \til{\G}$ is equal to $c_0 \cdot Cal_B,$ where $Cal_B(\til{\phi})$ is defined as \[ \int_0^1 dt \int_B H_t \,\om_B^n \] for $\om_B = 2\,\om_{FS}|_{B}$ and $H \in C^\infty_c([0,1]\times B,\R)$ any Hamiltonian normalized to vanish near $\del B$ generating a path $\{\phi_t\}_{t=0}^1$ whose class in $\til{\Ham}_c(B)$ is $\til{\phi}.$ In particular $|\mu_G|$ restricted to the image of the composition $\til{\Ham}(\C P^n) \xrightarrow{i} \til{\cH} \to \til{\G}$ is unbounded. We recall that the image of the map $\til{\Ham}(\C P^n) \xrightarrow{i} \til{\cH}$ lies in the kernel of the Calabi-Weinstein homomorphism $\til{\cH} \to \R$ (see Remark \ref{Remark: Calabi-Weinstein}). Moreover, by \cite[Section 2.2]{RemarksInvariantsLoops} for all $\gamma \in \pi_1(\Ham(\C P^n)),$ the element $i(\gamma^{n+1}) = i(\gamma)^{n+1} \in \til{\cH}$ in fact lies in $\pi_1(\cH).$ Moreover, as $\mu_G$ is homogeneous, by \cite[Theorem 1]{RemarksInvariantsLoops} we have \[\mu_G(i(\gamma)) = \frac{1}{n+1}\mu_G(i(\gamma)^{n+1}) = 2\, cw(i(\gamma)^{n+1}) = 0.\] Hence, for $\til{\phi} \in \til{\Ham}(\C P^n)$ and for any $\gamma \in \pi_1(\Ham(\C P^n)),$ as $\mu_G$ is homogeneous and $i(\gamma) \in Z(\til{\cH})$ is a central element, we have $\mu_G(i(\til{\phi}\gamma)) = \mu_G(i(\til{\phi})) + \mu_G(i(\gamma)) = \mu_G(i(\til{\phi})),$ and hence \[\overline{\mu}_G(\phi):=\mu_G(i(\til{\phi}))\] depends only on $\phi = \pi(\til{\phi}) \in \Ham(\C P^n).$ Moreover $|\overline{\mu}_G|$ is unbounded on $\Ham(\C P^n).$

Now Proposition \ref{Proposition: upper bound} Item \ref{item: upper bound 4} implies that for $\til{\phi} \in \til{\Ham}(\C P^n)$ with $\pi(\til{\phi}) = \phi,$ \[|\overline{\mu}_G(\phi)| = |\mu_G(i(\til{\phi}))|\leq c_1\, |i(\til{\phi})|_\al + c_2,\] and hence \[|\overline{\mu}_G(\phi)| \leq c_1\,\nu_2(\phi) + c_2,\] as $\nu_2(\phi) = \inf_{\pi(\til{\phi}) = \phi} |i(\til{\phi})|_\al.$ This implies that $\nu_2$ is unbounded, as required.


\end{proof}

\section*{Acknowledgements}
I thank Sheila Margherita Sandon and Leonid Polterovich for introducing me to the main subjects of this paper over the course of numerous conversations. I thank Strom Borman, Viktor Ginzburg, Michael Khanevsky, Boris Khesin, Fran{\c{c}}ois Lalonde, Dmitry Tonkonog, Michael Usher and Frol Zapolsky for useful conversations. I thank Strom Borman, Leonid Polterovich, Sheila Margherita Sandon and Frol Zapolsky for their comments on an earlier version of this manuscript. The work on this paper has started right after the workshop "Rigidity and Flexibility in Symplectic Topology and Dynamics" at the Lorentz Center (Leiden, 2014). I thank its organizers and participants for a very enjoyable and stimulating event. This work was carried out at CRM, University of Montreal, and I thank this institution for its warm hospitality.


\bibliographystyle{amsplain}
\bibliography{HoferEnergyContactomorphism}




\end{document}